\DeclareSymbolFont{cyrletters}{OT2}{wncyr}{m}{n}
\DeclareMathSymbol{\Sha}{\mathalpha}{cyrletters}{"58}
\DeclareMathSymbol{\Che}{\mathalpha}{cyrletters}{"51}
\newcommand{\Ga}{{\mathbf{G}}_{\rm{a}}}
\newcommand{\Gm}{{\mathbf{G}}_{\rm{m}}}
\DeclareMathOperator{\Gal}{Gal}
\DeclareMathOperator{\Br}{Br}
\DeclareMathOperator{\Pic}{Pic}
\DeclareMathOperator{\Ext}{Ext}
\DeclareMathOperator{\Hom}{Hom}
\DeclareMathOperator{\R}{R}
\DeclareMathOperator{\im}{im}
\DeclareMathOperator{\et}{\acute{e}t}
\newcommand*{\Z}{\ensuremath{\mathbf{Z}}}                        
\newcommand*{\Q}{\ensuremath{\mathbf{Q}}}                     
\newcommand*{\A}{\ensuremath{\mathbf{A}}}                        
\renewcommand*{\P}{\ensuremath{\mathbf{P}}}                        
\newcommand*{\calO}{\mathcal{O}}                                  
\newcommand*{\address}{Einstein Institute of Mathematics, The Hebrew University of Jerusalem, Edmond J. Safra Campus, 91904, Jerusalem, Israel}
\newcommand*{\email}{zevrosengarten@gmail.com}
\numberwithin{equation}{section}
\newtheorem{theorem}{Theorem}[section]
\newtheorem{lemma}[theorem]{Lemma}
\newtheorem{proposition}[theorem]{Proposition}
\newtheorem{corollary}[theorem]{Corollary}
\theoremstyle{definition}
  \newtheorem{definition}[theorem]{Definition}
  \theoremstyle{definition}
  \newtheorem{example}[theorem]{Example}
\theoremstyle{remark}
  \newtheorem{remark}[theorem]{Remark}
\tikzset{commutative diagrams/.cd,
mysymbol/.style={start anchor=center,end anchor=center,draw=none}
}
\title{\textbf{THE RESTRICTED PICARD FUNCTOR}}
\author{Zev Rosengarten \thanks{MSC 2010: 14C22, 14K30. \newline
Keywords: Picard Schemes, Picard Groups.  \newline
While completing this work, the author was supported by Israel Science Foundation Grant No.\,2083/24.
}}
\date{}
\begin{document}
\maketitle

\begin{abstract}
We prove in significant generality the (almost-)representability of the Picard functor when restricted to smooth test schemes. The novelty lies in the fact that we prove such (almost-)representability beyond the proper setting.
\end{abstract}

\setcounter{tocdepth}{1}
\tableofcontents{}

\section{Introduction}

The Picard functor of a variety plays an important role in number theory and algebraic geometry. Some fundamental examples are the dual variety of an abelian variety and the Jacobian of a projective curve, both of which are defined in terms of Picard schemes. Picard schemes also arise in the construction of the Albanese variety of a proper variety, as well as in many other contexts.

Representability results for Picard functors are therefore of fundamental importance. A wonderful theorem, proved by Murre \cite{murre} and Oort \cite{oort} building on work of Grothendieck \cite{fga}, states that the Picard functor $\Pic_{X/K}$ of a proper scheme over a field $K$ is represented by a scheme \cite[Th.\,2]{murre}. Beyond the proper case one cannot expect such representability, as one already sees in the case of the affine line. Indeed, the Picard group of the affine line over a field is trivial, so were $\Pic_{\A^1/K}$ represented by a scheme (necessarily locally of finite type), it would have to be infinitesimal. But for a reduced scheme $X$ that is not seminormal, one does not generally have $\Pic(\A^1_X) = \Pic(X)$, so the desired representability cannot hold. A better, more conceptual obstacle to representability is that a simple cohomological calculation using the ring of dual numbers as a test scheme shows that the tangent space to $\Pic_{X/K}$ at the identity is given by ${\rm{H}}^1(X, \calO_X)$, which will not be finite-dimensional in general beyond the proper setting.

One observes that the problem in the above examples arises from test schemes that are not ``nice.'' Thus Achet, following a suggestion of Raynaud, was led to introduce the restricted Picard functor -- which is the Picard functor but with one's test schemes restricted to be smooth over $K$ -- and proved its representability for forms of affine space admitting a regular compactification \cite[Th.\,1.1]{achetpic}. (By a {\em form} $X$ of affine $n$-space we mean an fppf form. By the Nullstellensatz, this is the same as saying that $X_L \simeq \A^n_L$ as $L$-schemes for some finite field extension $L/K$.)

\begin{definition}
\label{restrictedpicdef}
Let $K$ be a field and $X$ a $K$-scheme. The {\em naive restricted Picard functor}
\[
(\Pic^+_{X/K})^{\rm{naive}}\colon \{\mbox{smooth } K-\mbox{schemes}\} \rightarrow \{\mbox{abelian groups}\}
\]
is the functor defined by the formula $T \mapsto \Pic(X \times T)$. The {\em restricted Picard functor} $\Pic^+_{X/K}$ is the \'etale sheafification of $(\Pic^+_{X/K})^{\rm{naive}}$.
\end{definition}

\begin{remark}
\label{sheafifyrmk}
The \'etale sheafification in the definition of the restricted Picard functor is essential if one wishes to obtain representability results, as schemes are always sheaves for the \'etale (and even the fpqc) site, by the general theory of descent. The same technical issue arises in the definition of the usual Picard functor; see \cite[\S9.2]{fgaexplained}. For an example of a $K$-scheme $X$ such that the map $\Pic(X) \rightarrow \Pic^+_{X/K}(K)$ is not surjective, see Example \ref{picnotnaive}.
\end{remark}

Unfortunately, even the restricted Picard functor cannot be represented by a scheme in general. To illustrate the problem, let $\overline{C}/K$ be a smooth projective geometrically connected curve of genus $g > 0$, let $x, y \in \overline{C}(K)$ be distinct rational points, and let $C := \overline{C}\backslash\{x, y\}$. Then $\Pic_{\overline{C}/K}$ is an extension of $\Z$ by an abelian variety, the Jacobian $J$ of $\overline{C}$. Because $\Pic(\overline{C} \times T) \twoheadrightarrow \Pic(C \times T)$ for every smooth (or even regular) $K$-scheme $T$, one can conclude that $\Pic^+_{C/K}$ is the quotient of $J$ by the point $j$ represented by the divisor $[x] - [y]$. If $j$ is not a torsion point, then this quotient is not a scheme. Thus we see that an inevitable obstacle to representability comes from quotients of group schemes by subgroups generated by points of infinite order.

We may now state the main result of the present paper. For a commutative, locally finite type $K$-group scheme $G$, we denote by $G^t$ the preimage in $G$ of the torsion subgroup of the \'etale component group $G/G^0$.

\begin{theorem}
\label{almostreprestpic}
If $K$ is a field and $X$ is a regular $K$-scheme of finite type, then there is an isomorphism $\Pic^+_{X/K} \simeq G/E$ of sheaves on the smooth-\'etale site of ${\rm{Spec}}(K)$, where $G$ is a smooth commutative $K$-group scheme with finitely generated component group and $E \subset G^t$ is an \'etale $K$-group scheme such that $E(K_s)$ is a free abelian group of finite rank. Furthermore, $G^0$ is unique up to unique isomorphism.
\end{theorem}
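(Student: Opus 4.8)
The plan is to reduce to the proper case by compactifying and then quotienting by the classes of the boundary divisors. First I would reduce to the case that $X$ is geometrically connected, handling the general case by decomposing $X_{K_s}$ into its connected components and descending along the action of $\Gal(K_s/K)$. Next, using Nagata's theorem I would embed $X$ as a dense open subscheme of a proper $K$-scheme, and after normalizing arrange a normal proper compactification $\overline{X} \supseteq X$ whose boundary $\overline{X}\setminus X$ has pure codimension-one irreducible components $D_1,\dots,D_r$; these components are permuted by $\Gal(K_s/K)$, so the free abelian group $L := \bigoplus_i \Z\,[D_i]$ is naturally the group of $K_s$-points of an \'etale $K$-group scheme of the type appearing in the statement.

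The heart of the argument is the fundamental exact sequence of sheaves on the smooth-\'etale site of $\Spec(K)$,
\[
L \xrightarrow{\ \phi\ } \Pic_{\overline{X}/K} \longrightarrow \Pic^+_{X/K} \longrightarrow 0,
\]
where $\phi$ sends $[D_i]$ to the class of the corresponding divisor. To establish this I would show, for every smooth $K$-scheme $T$, that the restriction map $\Pic(\overline{X}_T)\to\Pic(X_T)$ is surjective with kernel generated by the classes $[D_i\times T]$. Since $T$ is smooth over $K$ and $X$ is regular, the base change $X_T$ is regular, so $\Pic(X_T)$ coincides with its divisor class group and every class extends over the boundary; the content is therefore to pass from the divisor-class statement on the normal scheme $\overline{X}_T$ back to genuine line bundles. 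Granting this, sheafifying the tautological presentation identifies $\Pic^+_{X/K}$ with the cokernel of $\phi$; one checks along the way that on the smooth site $\Pic_{\overline{X}/K}$ (which is representable by the Murre--Oort theorem, but possibly non-reduced in positive characteristic) is represented by a smooth commutative $K$-group scheme with finitely generated component group, by replacing it with the smooth $K$-group scheme representing its restriction to the smooth site.

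It then remains to massage the cokernel of $\phi$ into the asserted form $G/E$ with $E\subset G^t$. The image of $L$ in the component group of $\Pic_{\overline{X}/K}$ is a finitely generated subgroup; I would split off a free complement mapping isomorphically onto the non-torsion part of this image and \emph{define} $G$ to be the quotient of the smooth model of $\Pic_{\overline{X}/K}$ by the subscheme generated by these particular divisor classes. Quotienting a smooth group scheme with finitely generated component group by such an \'etale subgroup that injects onto a free part of the component group again yields a smooth group scheme, so $G$ is representable. The residual boundary relations then generate an \'etale subgroup $E$ whose image in $G/G^0$ is torsion -- that is, $E\subset G^t$ -- with $E(K_s)$ free of finite rank, and $G/E\simeq\Pic^+_{X/K}$; it is precisely this last quotient (as in the genus-one example of the introduction) that need not be a scheme. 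Finally, the uniqueness of $G^0$ would follow by identifying it with the smooth connected group underlying $\Pic^0_{\overline{X}/K}$ and checking that this is intrinsic to $X$, independent of the chosen compactification.

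The step I expect to be the main obstacle is the construction of the compactification together with the divisor sequence in arbitrary characteristic: since resolution of singularities is unavailable, $\overline{X}$ can in general only be taken normal rather than regular, and the boundary divisors need not be Cartier on $\overline{X}_T$. Bridging the gap between the divisor class group of the normal scheme $\overline{X}_T$ and its Picard group -- equivalently, proving surjectivity of $\Pic(\overline{X}_T)\to\Pic(X_T)$ with the correct kernel -- is the crux, and I would expect to handle it either by a further blow-up making the boundary Cartier or by descent along a de Jong alteration reducing to the regular case.
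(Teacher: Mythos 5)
Your outline is essentially the paper's argument in characteristic zero: there a regular compactification $\overline{X}$ exists, the sequence $L \to \Pic_{\overline{X}/K} \to \Pic^+_{X/K} \to 0$ holds on smooth test schemes, and the remaining bookkeeping (replacing $\Pic_{\overline{X}/K}$ by its maximal smooth subgroup scheme, arranging $E \subset G^t$ free, and proving uniqueness of $G^0$ via the vanishing of $\Ext^1(G,\Z^n)$ for connected $G$ rather than by comparing compactifications) goes through as you describe. But in positive characteristic the step you yourself flag as ``the main obstacle'' is a genuine gap, and neither proposed remedy closes it. With $\overline{X}$ only normal, one has $\Pic(\overline{X}_T) \hookrightarrow {\rm{Cl}}(\overline{X}_T)$ but not equality: a line bundle on the regular open $X_T$ extends to a Weil divisor class on $\overline{X}_T$ that need not be Cartier, so $\Pic(\overline{X}_T) \to \Pic(X_T)$ can fail to be surjective. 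Blowing up to make the boundary divisors Cartier does not repair this, since the obstruction lives in the singularities of $\overline{X}$ along the boundary rather than in the boundary divisors themselves, and the blow-up may create new non-regular points in any case.

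Your second remedy, ``descent along a de Jong alteration reducing to the regular case,'' is the paper's actual route, but it is not a reduction one can invoke in a sentence --- it is the content of \S\S\ref{descGaloissection}--\ref{descpurelyinpsepsection}. An alteration $Y \to X$ is only generically finite, of degree possibly greater than one; after shrinking $X$ (justified by Lemma \ref{opensubdev}) it factors as a finite Galois cover followed by a finite flat radicial cover. Descending almost-representability through the Galois part already requires the representability of the unit sheaf ${\rm{Units}}_{Y/K}$ and of the sheaves $\mathscr{H}^i(G,-)$ fed into the Hochschild--Serre sequence (Proposition \ref{alrepdescgal}). Descending through the radicial part is the technical heart: a line bundle on $X_T$ becomes a line bundle on $Y_T$ together with a descent datum, i.e.\ a unit on $(Y\times_X Y)_T$ satisfying the cocycle condition, and since $Y \times_X Y$ is non-reduced these units a priori sweep out an infinite-dimensional space, yielding only an ind-scheme. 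Propositions \ref{intdomprop}, \ref{prodnondiv} and \ref{locbddesdata} establish the uniform boundedness of all such cocycles, which is exactly what makes the functor of descent data of finite type and hence almost-representable via Proposition \ref{bddcocycimprep}. None of this is supplied by, or follows formally from, your outline, so as written the proof covers only the characteristic-zero case.
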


 For results on the structure of the group $G^0$ appearing in Theorem \ref{almostreprestpic}, see \S\ref{structuresection}. As we shall see, the most technically demanding case of Theorem \ref{almostreprestpic} is when ${\rm{char}}(K) = p > 0$. Indeed, when ${\rm{char}}(K) = 0$ one may entirely skip \S\S\ref{boundednesssec}--\ref{descpurelyinpsepsection}. Let us remark, however, that even for algebraically closed $K$ of characteristic $p$, these sections are required, because even though $K$ is perfect, function fields of varieties over $K$ are typically not.
 
 Theorem \ref{almostreprestpic} is nice, but what we would really like are results asserting representability by a scheme. Using the above theorem, we will deduce the following.

\begin{theorem}
\label{represtpic}
Let $K$ be a field and $X$ a regular $K$-scheme of finite type with dense smooth locus. Assume that any map from $X$ into an abelian variety $A$ factors through a finite subscheme of $A$. Then $\Pic^+_{X/K}$ is represented by a smooth commutative $K$-group scheme with finitely generated component group. Furthermore, $(\Pic^+_{X/K})^0$ is unipotent.
\end{theorem}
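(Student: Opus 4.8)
The plan is to feed $X$ into Theorem \ref{almostreprestpic}, obtaining an isomorphism $\Pic^+_{X/K} \simeq G/E$ with $G$ smooth commutative with finitely generated component group, $E \subset G^t$ étale, and $E(K_s)$ free of finite rank, and then to show that the two hypotheses force $G^0$ to be unipotent and $E$ to have finite image in $G$. Granting these two facts the theorem follows quickly: if $\Lambda \subset G$ denotes the (finite) scheme-theoretic image of $E$, then the epimorphism $E \twoheadrightarrow \Lambda$ identifies the sheaf quotient $G/E$ with $G/\Lambda$, which is representable by a smooth commutative $K$-group scheme since one is quotienting a smooth group scheme by a finite (étale) subgroup. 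Its component group is a quotient of that of $G$, hence finitely generated, and its identity component is $G^0/(\Lambda \cap G^0)$, which is unipotent as soon as $G^0$ is. So everything reduces to analyzing $G^0$ and the image of $E$.

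To control $G^0$ I first argue that it has no nontrivial torus part, and here the regularity of $X$ is decisive. Since $X$ is regular, hence normal, normalizing any compactification leaves $X$ untouched, so $X$ sits as a dense open in a normal proper $\overline{X}$; the restriction map $\Pic(\overline{X} \times T) \to \Pic(X \times T)$ is then surjective for every smooth $T$ with kernel generated by the finitely many boundary components, so that $\Pic^+_{X/K}$ is a quotient of (the maximal smooth subgroup of) $\Pic_{\overline{X}/K}$ and, by the uniqueness of $G^0$, one may take $G^0 = \Pic^0_{\overline{X}/K,\mathrm{red}}$. A torus in the identity component of the Picard scheme of a proper variety reflects non-normal, nodal-type identifications, so the normality of $\overline{X}$ guarantees that $G^0$ is an extension of an abelian variety $A$ by a smooth connected unipotent group, with no torus; this is also recorded among the structure results of \S\ref{structuresection}. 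Over a perfect base field normality is preserved under extension of scalars and the unipotent part vanishes as well, so the genuinely unipotent (and most delicate) behavior occurs only over imperfect fields, exactly as the introduction anticipates.

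The heart of the argument is to eliminate the abelian variety $A$ using the second hypothesis, for which the dense smooth locus of $X$ is what lets the Albanese formalism (which needs smoothness) be applied. The point is that the maximal abelian variety quotient of $G^0$ is canonically dual to the Albanese variety of $X$: a morphism $f \colon X \to B$ into an abelian variety, together with the Poincaré bundle on $B \times B^\vee$, produces by pullback a line bundle on $X \times B^\vee$ and hence a homomorphism $B^\vee \to \Pic^+_{X/K}$, and this correspondence matches nonconstant maps into abelian varieties with nonzero homomorphisms from abelian varieties into $G^0$. The assumption that every such $f$ factors through a finite subscheme of $B$ therefore says precisely that there are no nonzero homomorphisms from an abelian variety into $G^0$; equivalently the Albanese of $X$ is trivial and $A = 0$. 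Consequently $G^0$ is unipotent.

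It remains to see that $E$ has finite image. Its image in the component group $G/G^0$ is torsion, hence finite, because $E \subset G^t$; and its image in $G^0$ is finite because $G^0$ is now unipotent — in characteristic $p$ the group $G^0(K_s)$ is killed by a power of $p$, so the image of the finitely generated group $E(K_s)$ is finite, while in characteristic $0$ the field is perfect and the unipotent part already vanished, leaving $G^0 = 0$. Thus $E$ has finite image and the reduction of the first paragraph applies. I expect the main obstacle to be the identification of the third paragraph: carrying out the Albanese/Poincaré duality rigorously — in particular verifying that it detects \emph{every} abelian variety subquotient of $G^0$ — is delicate over imperfect fields, where one has only a normal rather than smooth compactification and where the characteristic-$p$ descent machinery of the earlier sections must be brought to bear.
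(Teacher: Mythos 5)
Your skeleton matches the paper's: start from the presentation $\Pic^+_{X/K}\simeq G/E$ of Theorem \ref{almostreprestpic}, use the hypothesis on maps to abelian varieties to kill the abelian part of $G^0$, and then observe that $E$, being both free of finite rank and contained in the now-torsion $G^t$, must vanish. But two steps in your execution are genuine gaps. First, your elimination of the torus rests on the claim that for a \emph{normal} compactification $\overline{X}$ the restriction $\Pic(\overline{X}\times T)\to\Pic(X\times T)$ is surjective. That surjectivity requires $\overline{X}\times T$ to be locally factorial (a line bundle on the open part extends only to a divisorial sheaf in general, e.g.\ the smooth locus of a cone over an elliptic curve), so you are implicitly assuming a regular compactification --- precisely the open problem in positive characteristic that \S\S\ref{boundednesssec}--\ref{almostrepsection} exist to circumvent. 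The paper avoids compactifications entirely here: Proposition \ref{noG_minpic} kills tori directly by reducing, via the isogeny-cover trick, to a homomorphism $\Gm\to\Pic^+_{X/K}$ induced by an actual line bundle on $X\times\Gm$, and then invoking $\Pic(X)\xrightarrow{\sim}\Pic(X\times\Gm)$ for normal $X$. Your appeal to ``nodal-type identifications'' is a heuristic, not an argument, and the identification $G^0=\Pic^0_{\overline{X}/K,\red}$ it is meant to support is unavailable.

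Second, and as you yourself flag, the ``correspondence'' between homomorphisms $B\to G^0$ and line bundles on $X\times_K B$ is exactly the point that needs proof, and it is false as stated: a homomorphism $B\to G/E$ need not lift through $E$ nor come from a line bundle (there are obstructions in $\Ext^1(B,E)$ and in ${\rm{H}}^2(B, f_*\Gm/\Gm)$). The paper's Proposition \ref{isogcover} --- resting on Corollary \ref{smconnobst}, Lemma \ref{H^i(Z, Q/Z)}, and a torsor-becomes-a-group argument using the additivity of the obstruction class --- shows the lift exists only after passing to a finite \'etale isogenous cover of $B$, which suffices since an isogenous cover of an abelian variety is again an abelian variety. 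Without that proposition your third paragraph is an announcement of the theorem rather than a proof of it. (Minor points: the dense smooth locus is used in the paper to produce a $K_s$-point of $X$, not to run an ``Albanese formalism''; and once $G^0$ is unipotent the conclusion is not merely that $E$ has finite image but that $E=0$, since a subsheaf of the torsion group $G^t$ with free group of $K_s$-points is trivial --- your extra quotient-by-$\Lambda$ step is unnecessary.)
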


As examples of the above, let us mention two particular results.

\begin{theorem}
\label{geomunirrep}
Let $K$ be a field and let $X$ be a regular $K$-scheme of finite type with dense smooth locus. If $X_{\overline{K}}$ admits a dense open subscheme that is rationally connected, then $\Pic^+_{X/K}$ is represented by a smooth commutative $K$-group scheme with finitely generated component group. Furthermore, $(\Pic^+_{X/K})^0$ is unipotent.
\end{theorem}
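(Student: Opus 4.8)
The plan is to deduce this from Theorem \ref{represtpic}. Since $X$ is regular of finite type with dense smooth locus, it suffices to verify the one remaining hypothesis of that theorem: that every $K$-morphism $f\colon X \to A$ into an abelian variety $A$ factors through a finite subscheme of $A$. A morphism to $A$ factors through a finite subscheme precisely when its set-theoretic image is finite (the scheme-theoretic image is then a $0$-dimensional closed subscheme of $A$, whose points are closed with finite residue extensions, hence it is finite over $K$), and this finiteness is insensitive to the faithfully flat base change $\Spec(\overline{K}) \to \Spec(K)$ since $A_{\overline{K}} \to A$ is surjective with finite fibers. Thus it is enough to show that $f_{\overline{K}}\colon X_{\overline{K}} \to A_{\overline{K}}$ has finite image.

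First I would pass to the geometric situation and let $U \subset X_{\overline{K}}$ be the given dense open rationally connected subscheme. The key input is the classical fact that a rationally connected variety over an algebraically closed field admits no nonconstant morphism to an abelian variety. The reason is that any morphism $\mathbf{P}^1 \to A_{\overline{K}}$ is constant (an abelian variety is not uniruled and so contains no rational curves), whence every rational curve in $U$ is contracted by $f_{\overline{K}}$; since two general points of $U$ lie on such a rational curve, applying this to the universal family of rational curves and using that the induced map to $U \times U$ is dominant shows that $f_{\overline{K}}(x) = f_{\overline{K}}(y)$ for all $x, y \in U$. Hence $f_{\overline{K}}\vert_U$ is constant, with some closed image point $a \in A(\overline{K})$.

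Next I would propagate this from the dense open $U$ to all of $X_{\overline{K}}$. Since $U$ is dense we have $\overline{U} = X_{\overline{K}}$, and continuity of $f_{\overline{K}}$ gives $f_{\overline{K}}(X_{\overline{K}}) = f_{\overline{K}}(\overline{U}) \subseteq \overline{f_{\overline{K}}(U)} = \overline{\{a\}} = \{a\}$, the last equality because $a$ is a closed point. Thus $f_{\overline{K}}$ has finite (indeed one-point) image, so by the reductions above $f$ factors through a finite subscheme of $A$. The hypothesis of Theorem \ref{represtpic} is therefore satisfied, and that theorem yields both the representability of $\Pic^+_{X/K}$ by a smooth commutative $K$-group scheme with finitely generated component group and the unipotence of $(\Pic^+_{X/K})^0$.

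The only genuinely substantive ingredient is the contraction statement for rationally connected varieties, which is standard (see, e.g., Koll\'ar, \emph{Rational Curves on Algebraic Varieties}, or Debarre, \emph{Higher-Dimensional Algebraic Geometry}); everything else is formal. The main point to treat with care is the positive-characteristic subtlety that $X_{\overline{K}}$ need not be reduced even though $X$ is regular, which is precisely why I phrase the final step in terms of finiteness of the set-theoretic image, via the closure estimate above, rather than attempting to prove that $f_{\overline{K}}$ is literally constant as a morphism of schemes.
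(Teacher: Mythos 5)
Your proposal is correct and follows essentially the same route as the paper: verify the hypothesis of Theorem \ref{represtpic} by noting that rationally connected varieties admit no nonconstant maps to abelian varieties (since $\P^1$ does not), so any map $X \to A$ has finite image. You supply more detail than the paper does on the formal reductions (checking finiteness of the image after base change to $\overline{K}$, propagating constancy from the dense open $U$ to all of $X_{\overline{K}}$, and the non-reducedness caveat), but the substance and the key input are identical.
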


\begin{proof}
Abelian varieties do not admit nonconstant maps from $\P^1$ by \cite[Prop.\,6.1]{milne}, hence also not from rationally connected varieties. So $X$ admits no nonconstant map to an abelian variety over $\overline{K}$, hence also not over $K_s$. Now apply Theorem \ref{represtpic}.
\end{proof}

As an easy consequence of the above, we will prove the following theorem, which generalizes Achet's result to arbitrary forms of affine space, with no assumption on the existence of a regular compactification. Following Achet, we call a smooth connected unipotent $K$-group $U$ {\em strongly wound} when it admits no nontrivial unirational $K$-subgroup. Equivalently, there is no nonconstant $K$-morphism $X \rightarrow U$ with $X$ an open subscheme of $\P^1_K$.

\begin{theorem}
\label{formsofaffrep}
If $K$ is a field and $X$ is a $K$-form of $\A^n_K$, then $\Pic^+_{X/K}$ is represented by a smooth commutative unipotent $K$-group scheme with strongly wound identity component. If $X(K) \neq \emptyset$, then, for every smooth $K$-scheme $S$, $\Pic^+_{X/K}(S) = \Pic(X \times S)/\Pic(S)$.
\end{theorem}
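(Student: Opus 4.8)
The plan is to obtain representability from Theorem~\ref{geomunirrep} and then to pin down the structure of the representing group using the special geometry of forms of affine space. First I would dispose of the elementary reductions. Since $X$ is an fppf form of $\A^n_K$, smoothness descends, so $X$ is smooth of finite type over $K$, hence regular with (trivially) dense smooth locus; moreover $X_{\overline{K}} \simeq \A^n_{\overline{K}}$ is rational and therefore rationally connected. Thus Theorem~\ref{geomunirrep} applies and yields an isomorphism $\Pic^+_{X/K} \simeq P$ with $P$ a smooth commutative $K$-group scheme with finitely generated component group and with $P^0$ unipotent. It then remains to prove that $P^0$ is strongly wound, that $P$ itself is unipotent, and the stated formula when $X(K) \neq \emptyset$.

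For strong woundedness I would exploit homotopy invariance of the Picard group of regular schemes at the level of the naive functor. For $Y$ regular and $V \subseteq \P^1_K$ a nonempty open, every fibral divisor $Y \times \{z\}$ (with $z \in \P^1$ closed) is principal on $Y \times \A^1$, and $\P^1$ satisfies the projective bundle formula, so $\Pic(Y \times V) = \Pic(Y) \oplus \mathrm{pr}_V^*\Pic(V)$. Taking $Y = X \times T$ for $T$ smooth (so that $Y$ is regular), this gives $\Pic(X \times T \times V) = \Pic(X \times T) \oplus \mathrm{pr}_V^*\Pic(V)$. The extra summand is killed by étale sheafification, since any line bundle on $V$ already trivializes on the standard affine cover of $\P^1$ restricted to $V$; hence the pullback $\Pic^+_{X/K}(T) \to \Pic^+_{X/K}(T \times V)$ is an isomorphism for all smooth $T$ and all nonempty open $V \subseteq \P^1_K$. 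Taking $T = \Spec K$ shows that every $K$-morphism $V \to P$ is constant, so there is no nonconstant $K$-morphism from an open subscheme of $\P^1_K$ into $P^0$; by the characterization recalled before the statement, $P^0$ is strongly wound.

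To upgrade ``$P^0$ unipotent'' to ``$P$ unipotent'' it suffices to show that the finitely generated étale component group $\pi_0(P)$ is a finite $p$-group (and is trivial when $\mathrm{char}(K) = 0$). I would pass to $K_s$, where the representing group base-changes compatibly, and rule out a free part and prime-to-$p$ torsion in $\pi_0(P)$ using the geometric triviality of $X$: because $X_{\overline{K}} \simeq \A^n$ has vanishing higher étale cohomology with $\Z/\ell$-coefficients for $\ell \neq p$ and contains no complete curves, the prime-to-$p$ torsion and free quotients of $\Pic^+_{X/K}$—precisely where such parts of $\pi_0(P)$ would be detected—vanish; the structural results of \S\ref{structuresection} should make this precise. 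What remains is a finite $p$-group, so $P$ is unipotent.

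Finally, when $X(K) \neq \emptyset$, a rational point gives a section $\sigma\colon S \to X \times S$ of the projection for every $S$. Since $X$ is a form of affine space, global units are constant along $X$, i.e.\ $\calO(X \times T)^* = \calO(T)^*$ for every reduced $T$; consequently the relative unit sheaf is trivial and one has the five-term exact sequence $0 \to \Pic(S) \to \Pic(X \times S) \to \Pic^+_{X/K}(S) \to \Br(S) \to \Br(X \times S)$. The section splits the last map, forcing the connecting map $\Pic^+_{X/K}(S) \to \Br(S)$ to vanish; thus sheafification introduces nothing new and $\Pic^+_{X/K}(S) = \Pic(X \times S)/\Pic(S)$ for all smooth $S$. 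The main obstacle throughout is the fine structure over imperfect fields—that $P^0$ is strongly wound rather than merely wound, and that $\pi_0(P)$ carries no prime-to-$p$ or free part—both of which reflect that the nontriviality of $\Pic^+_{X/K}$ for a form of affine space is a purely inseparable phenomenon, invisible geometrically and located entirely in the interaction of étale sheafification with the Picard groups of rational curves and with the inseparable twisting of $X$.
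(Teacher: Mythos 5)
Your overall architecture matches the paper's: representability from Theorem \ref{represtpic} (via rational connectedness of $X_{\overline{K}}$), strong woundedness from homotopy invariance of Picard groups over opens of $\P^1$, unipotence by showing the component group is a finite $p$-group, and the naive formula from the Leray spectral sequence for $f\colon X\times S\to S$ together with triviality of units. However, two of your steps have real gaps. In the strong woundedness step you compute the \emph{presheaf} $T\times V\mapsto \Pic(X\times T\times V)$ on product test schemes and then assert that $\Pic^+_{X/K}(T)\to\Pic^+_{X/K}(T\times V)$ is an isomorphism because ``the extra summand is killed by sheafification.'' This does not follow: a section of the \'etale sheafification over $T\times V$ may only be represented by a line bundle on $X\times W$ for some \'etale cover $W\to T\times V$ that is not of product form, and your presheaf computation says nothing about those. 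The paper closes exactly this hole by first proving the surjectivity $\Pic(X\times S)\twoheadrightarrow\Pic^+_{X/K}(S)$ for all smooth $S$ (Propositions \ref{lbobstacle} and \ref{PicisnaiveforcertainX}, after base change to $K_s$ where $X$ acquires a rational point and has only constant units), and only then invoking $\Pic(X\times S)=\Pic(X)$ for $S$ open in $\P^1$ (Proposition \ref{stronglywd}). You in fact have the needed tool --- the five-term sequence you use in your last paragraph --- but you do not deploy it where it is required.

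The second gap is the claim that $\pi_0(P)$ is a finite $p$-group. Your appeal to vanishing of $\ell$-adic cohomology of $\A^n_{\overline{K}}$ and to ``the structural results of \S\ref{structuresection}'' is not an argument: the results of \S\ref{structuresection} concern the identity component (ruling out tori and, over perfect fields, $\Ga$), not the component group, and you never explain how $\mathrm{H}^*_{\et}(X_{\overline K},\Z/\ell)$ controls $\Pic(X_{K_s})$. The relevant mechanism is that $\pi_0(P)(K_s)$ is a quotient of $\Pic^+_{X/K}(K_s)=\Pic(X_{K_s})$ (by the surjectivity above), that $X_{L}\simeq\A^n_L$ for a finite extension $L$ of $K_s$ which is necessarily purely inseparable of degree $p^m$, and that a transfer argument (the paper cites \cite[Lem.\,3]{rosaffinenesscrit}) then shows $\Pic(X_{K_s})$ is killed by $p^m$; a finitely generated $p$-power-torsion group is a finite $p$-group. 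Your route could likely be completed via the Kummer sequence and invariance of the \'etale site under purely inseparable extensions, giving $\Pic(X_{K_s})[\ell]=\Pic(X_{K_s})/\ell=0$ for $\ell\neq p$, but as written this step is a gesture rather than a proof. The remaining parts of your proposal (the reduction to Theorem \ref{geomunirrep} and the final exact-sequence argument when $X(K)\neq\emptyset$) are correct and coincide with the paper's.
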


Among the examples falling under the rubric of Theorem \ref{formsofaffrep} are smooth connected unipotent groups. These may indeed have positive-dimensional restricted Picard functors when $K$ is imperfect. For general and specific classes of examples, see, for instance, \cite[Th.\,6.7.9]{kmt} and \cite[Props.\,5.2, 5.4, 5.5, Ex.\,5.8]{rostrans}.

The proof of Theorem \ref{almostreprestpic} proceeds roughly as follows. If $X$ admits a regular compactification $\overline{X}$ whose complementary divisors are all geometrically irreducible, then for any smooth connected $K$-scheme $T$, one has an exact sequence
\[
\oplus_{D} \Z \longrightarrow \Pic(\overline{X} \times T) \longrightarrow \Pic(X \times T) \longrightarrow 0,
\]
where the sum is over the divisors on $\overline{X}$ disjoint from $X$. We know that the proper $\overline{X}$ has representable Picard scheme, and we can use this and the above sequence to construct $\Pic^+_{X/K}$ as a quotient of $\Pic_{\overline{X}/K}$ by a finitely generated subgroup. (This is a slight simplification: Because $\Pic_{\overline{X}/K}$ may be nonsmooth, one actually forms the quotient of a maximal smooth subgroup scheme.) This is roughly speaking the strategy used in \cite{achetpic} to deal with forms of affine space having regular compactification.

The problem, of course, is that we do not know whether $X$ admits a regular compactification. Even when $K$ is algebraically closed, in positive characteristic the existence of a regular compactification remains a formidable open problem. Fortunately, however, there is a weaker form of resolution which is adequate for many purposes: de Jong's alterations theorem \cite[Th.\,4.1]{alterations}. (Actually, we will require a somewhat more precise result from \cite{alterations2}.) The difference between an alteration and a resolution is that we relax the requirement that the resolution be a birational map and only require that it be generically finite. We may replace $X$ by a dense open subscheme and thereby assume that the alteration $X' \rightarrow X$ is finite flat. We then use faithfully flat descent to describe a line bundle on $X$ in terms of a descent datum for a line bundle on $X'$. This is helpful because $X'$ admits a regular compactification and therefore has ``almost-representable'' restricted Picard functor in the sense of Theorem \ref{almostreprestpic}. 

The key difficulty in showing that the functor whose points consist of a line bundle plus descent datum on $X'$ is (almost) representable is to prove that the descent data which arise in this manner are ``uniformly bounded'' in a suitable sense. To illustrate the difficulty, consider the situation in which $X = \A^1_K$ and $X' = \A^1_{K'}$ for some finite extension field $K'/K$. (Of course, this is in a sense a silly example because $\Pic^+_{X/K} = 0$ in this case, as one may see directly. But it illustrates the point most simply -- without some of the technical window dressing -- and so the author hopes that the reader will indulge him and pretend that he does not realize this already.) A line bundle on $X'$ is then automatically trivial, and a descent datum for $X'_T/X_T$ consists of nothing other than a unit on $(X' \times_X X')_T = (\A^1_T)_{K' \otimes_K K'}$ (which is to say, an automorphism of the trivial line bundle) that satisfies the cocycle condition. The problem is that $K' \otimes_K K'$ may well be non-reduced when $K'/K$ is not separable. Therefore a unit on $(\A^1_{K'\otimes_K K'})_T$ may have arbitrarily high degree as a ``polynomial'' in $\Gamma(T_{K'\otimes_K K'}, \calO)[Y]$ on $\A^1$. (We put the word polynomial in scare quotes because when $T$ is not quasi-compact, a global section of $(\A^1_{K'\otimes_K K'})_T$ may have infinite degree.) Without bounding this degree a priori, we would obtain not representability by a scheme, but by an ind-scheme, because the set of all polynomials is not represented by a scheme. Thus we require some sort of bound on the degree. This will follow from the cocycle condition, which says that our unit $u$ satisfies 
\begin{equation}
\label{cocycleintro}
u_{12}u_{23} = u_{13},
\end{equation}
where $u_{ij}$ is the image of $u$ under the ``projection'' map $K'\otimes_K K' \rightarrow K' \otimes_K K' \otimes_K K'$ which sends $a \otimes b$ to $a \otimes b$ in the $(i,j)$ coordinates and $1$ in the other coordinate. Looking at the leading terms in (\ref{cocycleintro}), degree considerations would allow us to conclude that the degree of $u$ is bounded (that $u$ is constant, in fact) if $K' \otimes_K K' \otimes_K K'$ were an integral domain. Unfortunately, this is not the case (the non-reducedness of this ring being the whole cause of our troubles). But what we will show is a poor man's version of the domain property, which says that if $a, b \in K' \otimes_K K'$ satisfy $a_{12}b_{23} = 0$, then either $a$ or $b$ must vanish. (In fact, we prove something somewhat more general; see Proposition \ref{intdomprop}.) This will then allow us to conclude. The argument in general is similar, but with some additional technical complications.

The structure of this paper is as follows. In \S\ref{descGaloissection} we descend the almost-representability of the restricted Picard functor through finite Galois covers. In \S\ref{boundednesssec} we discuss the notion of boundedness for a collection of descent data, and show how boundedness of the collection of all possible descent data on certain line bundles implies the almost-representability of the restricted Picard functor. In \S\ref{descpurelyinpsepsection} we use the notion of boundedness to construct the tools required to descend almost-representability of the restricted Picard functor through finite radicial covers. In \S\ref{almostrepsection} we prove the almost-representability of the restricted Picard functor for regular $K$-schemes of finite type. Finally, in \S\ref{structuresection}, we study the structure of the restricted Picard functor.

\subsection{Acknowledgements} I thank K\k{e}stutis \v{C}esnavi\v{c}ius and the anonymous referees for many helpful comments, suggestions, and errata which have greatly improved the paper.

\subsection{Notation And Conventions}

Throughout this paper, $K$ denotes a field, and $K_s$, $K_{\rm{perf}}$, $\overline{K}$ denote separable, perfect, and algebraic closures of $K$, respectively. When it appears, $p > 0$ is a prime denoting the characteristic of $K$.

\section{Descent through Galois covers}
\label{descGaloissection}

In this section we descend the (almost-)representability of the restricted Picard functor through finite Galois covers of schemes. We first prove the following proposition on the representability of unit groups.

\begin{proposition}
\label{repofunits}
Let $f\colon X \rightarrow {\rm{Spec}}(K)$ be a normal $K$-scheme of finite type. There is a unique subfunctor ${\rm{Units}}_{X/K}$ of the functor $$f_*\Gm\colon \{\mbox{$K$-schemes}\} \rightarrow \{\mbox{abelian groups}\}$$ with the following two properties:
\begin{itemize}
\item[(i)] ${\rm{Units}}_{X/K}$ is represented by a smooth $K$-scheme.
\item[(ii)] ${\rm{Units}}_{X/K}(T) = (f_*\Gm)(T)$ for every smooth $K$-scheme $T$.
\end{itemize}
Furthermore, ${\rm{Units}}_{X/K}$ is a commutative $K$-group scheme such that the $K_s$-points of its component group are free of finite rank, and ${\rm{Units}}_{X/K}^0 = \R_{A/K}(\Gm)$, where $A \subset \Gamma(X, \calO_X)$ is the ring of elements algebraic over $K$.
\end{proposition}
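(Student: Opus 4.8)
The plan is to realize $\mathrm{Units}_{X/K}$ as an extension of a finitely generated \'etale group scheme by $\R_{A/K}(\Gm)$, sitting inside $f_*\Gm$, and to deduce everything from Rosenlicht's unit theorem together with the splitting of units on products of geometrically connected varieties.

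First I would record the basic geometry. Since $X$ is normal it is reduced with integral connected components, so one reduces to the case $X$ integral; then $A$ is the algebraic closure $L$ of $K$ in $K(X)$ (here one uses normality to see that every element of $K(X)$ algebraic over $K$ already lies in $\Gamma(X,\calO_X)$), a finite field extension, and $X$ is geometrically connected over $L$. The crucial observation is that for any smooth $K$-scheme $T$ the product $X \times_K T$ is again normal, a product of a normal scheme with a geometrically regular one being normal, hence reduced with integral components; this is what allows a unit theorem to be applied to $X \times_K T$. Next, the inclusion $A \hookrightarrow \Gamma(X, \calO_X)$ induces, for every $K$-scheme $T$, an injection $(A \otimes_K \calO(T))^\times \hookrightarrow \calO(X \times_K T)^\times$ (injectivity coming from flatness of $T$ over $K$), that is, a monomorphism of functors $\R_{A/K}(\Gm) \hookrightarrow f_*\Gm$; since $A = \prod_i L_i$, the group $\R_{A/K}(\Gm) = \prod_i \R_{L_i/K}(\Gm)$ is smooth, connected and commutative of dimension $\dim_K A$, and it will turn out to be the identity component.

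The heart of the argument is to control the quotient. Over $\overline{K}$, writing $X_{\overline K} \times_{\overline K} T_{\overline K}$ as a product of normal geometrically connected varieties, Rosenlicht's unit theorem (units modulo constants form a finitely generated free abelian group) together with the splitting of units on a product of geometrically connected varieties yields a canonical identification $\calO(X \times_K T)^\times/(A \otimes_K \calO(T))^\times \cong \calO(X_{\overline K})^\times/(A \otimes_K \overline K)^\times$, a finitely generated free abelian group independent of $T$. The point of dividing by all of $A \otimes_K \calO(T)$, rather than merely $\calO(T)$, is that it absorbs the algebraic and inseparable (possibly non-reduced, unipotent) contributions to the units, which is precisely why the identity component is the Weil restriction $\R_{A/K}(\Gm)$ and not a mere torus. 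Equipping this quotient with its natural $\Gal(K_s/K)$-action (continuous, hence of finite image) yields an \'etale $K$-group scheme $M$ with $M(K_s)$ free of finite rank, together with a functorial identification, for $T$ smooth, of $\calO(X \times_K T)^\times/\R_{A/K}(\Gm)(T)$ with $M(T)$.

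I would then define $\mathrm{Units}_{X/K}$ to be the \'etale subsheaf of $f_*\Gm$ generated by $\R_{A/K}(\Gm)$ and a Galois-stable finite set of units lifting generators of $M$; by the previous step this is an extension $1 \to \R_{A/K}(\Gm) \to \mathrm{Units}_{X/K} \to M \to 1$, hence representable by a smooth commutative $K$-group scheme with identity component $\R_{A/K}(\Gm)$ and component group $M$, and it satisfies (ii) by construction. Uniqueness is then formal: two subfunctors of $f_*\Gm$ representable by smooth $K$-schemes and agreeing with $f_*\Gm$ on all smooth test schemes agree in particular on one another's points, so they are identified by the Yoneda lemma on the smooth site. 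I expect the main obstacle to be the relative unit theorem of the third step, namely proving finite generation and, above all, the constancy in $T$ (\'etaleness) of the quotient in the presence of inseparability, so that the non-reduced and unipotent units are exactly accounted for by $\R_{A/K}(\Gm)$; the reductions of the first step and the verification that $X \times_K T$ is normal are what make Rosenlicht's theorem applicable in this relative setting.
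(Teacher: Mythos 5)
Your overall skeleton matches the paper's: realize $\mathrm{Units}_{X/K}$ as an extension of an \'etale group with finitely generated free group of $K_s$-points by $\R_{A/K}(\Gm)$ inside $f_*\Gm$, and get uniqueness from Yoneda on smooth test objects. But the step you yourself identify as the heart of the argument has a genuine gap. You claim a canonical identification of $\calO(X\times_K T)^\times/(A\otimes_K\calO(T))^\times$ with $\calO(X_{\overline K})^\times/(A\otimes_K\overline K)^\times$ and assert the latter is free of finite rank. When $K$ is imperfect this is false: if $A=L$ is inseparable over $K$, then $X_{\overline K}$ is non-reduced, and $1+\mathcal{N}$ (with $\mathcal{N}$ the nilradical) contributes units far beyond $(A\otimes_K\overline K)^\times$. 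Concretely, for $X=\A^1_L$ with $L=K(a^{1/p})$, one has $\Gamma(X_{\overline K},\calO)=(\overline K[\epsilon]/\epsilon^p)[t]$ with $\epsilon^p=0$, whose unit group modulo $(L\otimes_K\overline K)^\times$ contains the classes of $1+\epsilon g(t)$ for all $g\in\overline K[t]$ --- an infinite-dimensional unipotent group, not a finitely generated free one. The source of the error is that $\Spec(\overline K)$ is not a smooth $K$-scheme when $K$ is imperfect, so base change to $\overline K$ is not a legal test object and destroys exactly the normality/reducedness that keeps the unit group under control; Rosenlicht's theorems (finite generation and splitting on products) do not apply to the non-reduced scheme $X_{\overline K}$. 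Replacing $\overline K$ by $K_s$ keeps $X_{K_s}$ normal, but then $X_{K_s}$ need not be geometrically integral and you would need a unit theorem in that setting, which is essentially the thing to be proved.

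The paper avoids this entirely by a different mechanism: after passing to a finite Galois extension, it chooses a normal compactification $\overline X$ (Nagata plus normalization, which also forces a separate reduction for non-separated $X$ that your proposal omits) and maps the unit functor to $\Z^n$ by taking valuations along the geometrically irreducible boundary divisors $D_i$; the kernel is $\R_{A/K}(\Gm)$ because those are the units extending over $\overline X_T$, and the image is a constant free group once finitely many generating units $u_i$ are arranged to exist over the base field. This only ever base-changes along smooth morphisms (where uniformizers of the $D_i$ remain uniformizers), so it works uniformly in all characteristics. If you want to salvage your route, you would need to prove the relative unit statement directly for smooth test schemes over $K$ rather than by descent from $\overline K$, and the divisorial-valuation argument is in effect the standard way to do that.
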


\begin{proof}
The uniqueness of ${\rm{Units}}_{X/K}$ follows from Yoneda's Lemma, as does the fact that it is a commutative $K$-group scheme. To prove existence and the other assertions, we are free by Galois descent to replace $K$ by a finite Galois extension. First assume that $X$ is separated, and choose a normal compactification $\overline{X}$ of $X$. (First use the Nagata compactification theorem \cite[Tag 0F41]{stacks} to obtain a compactification, and then normalize.) Let $A := \Gamma(\overline{X}, \calO_{\overline{X}})$, a finite $K$-algebra. We have an inclusion $\R_{A/K}(\Gm) \hookrightarrow f_*\Gm$. We are free to replace $K$ by a finite Galois extension, so we may assume that all of the codimension-$1$ boundary components $D_1, \dots, D_n$ of $\overline{X}\backslash X$ are geometrically irreducible. For any smooth $K$-scheme $T$, $X_T$ is still normal, and a uniformizer of $D_i$ is also a uniformizer of $(D_i)_T$. We thus obtain a homomorphism $$\phi\colon(f_{\et})_*\Gm \rightarrow \Z^n$$ given by taking valuations along each of the $D_i$, where $f_{\et}$ is the map on \'etale sites induced by $f$. Then $\ker(\phi)(T) = (\overline{f}_*\Gm)(T)$ for every smooth $T/K$, where $\overline{f}\colon \overline{X} \rightarrow {\rm{Spec}}(K)$ is the structural morphism. If $A := \Gamma(\overline{X}, \calO_{\overline{X}})$, a finite $K$-algebra, then $\overline{f}_*\Gm = \R_{A/K}(\Gm)$. Replacing $K$ by a finite Galois extension, we may assume that $\im(\phi)$ is constant and that every $K$-point in $\im(\phi)$ lifts to a point of $(f_{\et})_*\Gm$. So there exist units $u_1, \dots, u_n$ on $X$ whose images freely generate $\im(\phi)$. Then every unit on $X_T$ for smooth connected $T$ is in the free subgroup generated by the $u_i$ times a unit extending over $\overline{X}_T$, which is to say, an element of $\R_{A/K}(\Gm)(T)$. Thus we obtain a smooth $K$-group scheme ${\rm{Units}}_{X/K}$ which is an extension of a free finite rank constant group scheme by $\R_{A/K}(\Gm)$, which computes the units of $X_T$ for smooth $T$, and which is a subfunctor of the functor of units (because $u_i$ and points of $\R_{A/K}(\Gm)$ define units on $X_T$ for any $K$-scheme $T$). This completes the proof when $X$ is separated.

Now consider the general, possibly non-separated case. Let $U \subset X$ be a separated dense open subscheme (for instance, a dense affine open). Replacing $K$ by a finite Galois extension if necessary, we may assume that all of the boundary components of $X\backslash U$ are geometrically irreducible. Let $D_1, \dots, D_m$ be these components. For any smooth scheme $T$, $X_T$ is still normal, and a uniformizer of $D_i$ is also a uniformizer of $(D_i)_T$. We thus obtain a homomorphism of group schemes $$\phi\colon {\rm{Units}}_{U/K} \rightarrow \Z^m$$ given by taking valuations along each of the $D_i$, and $\ker(\phi)(T) = (f_*\Gm)(T)$ for every smooth $T/K$. Because the codomain of $\phi$ is \'etale, $\ker(\phi)^0 = {\rm{Units}}_{U/K}^0$ and $\ker(\phi)$ is smooth. Furthermore, the algebraic subrings of $\Gamma(X, \calO_X)$ and $\Gamma(U, \calO_U)$ agree by normality of $X$. Thus $\ker(\phi)^0 = \R_{A/K}(\Gm)$, hence $\ker(\phi)^0$ defines a subfunctor of $f_*\Gm$. A similar argument to the one earlier involving the group freely generated by the $u_i$ then shows that $\ker(\phi)$ is a subfunctor of $f_*\Gm$, hence ${\rm{Units}}_{X/K} = \ker(\phi)$. That the component group is free of finite rank follows from the analogous assertion for $U$.
\end{proof}

In the process of descending Picard functors through Galois covers, we will have to make use of some cohomology of finite groups, and in fact to compute certain functors defined in terms of such groups. We therefore study such functors below.

\begin{definition}
\label{gpcohdef}
Let $S$ be a scheme, $\mathscr{M}$ a sheaf of abelian groups on the smooth-\'etale site of $S$, and $G$ a group acting on $\mathscr{M}$. Then $\mathscr{H}^i(G, \mathscr{M})$ is the (\'etale) sheafification of the presheaf $T \mapsto {\rm{H}}^i(G, \mathscr{M}(T))$.
\end{definition}

\begin{proposition}
\label{H^i(G,et)}
If $E$ is an \'etale commutative $K$-group scheme, and $G$ is a finite group acting on $E$, then $\mathscr{H}^i(G, E)$ is an \'etale $K$-group scheme. If $E(K_s)$ is finitely generated, then $\mathscr{H}^i(G, E)(K_s)$ is finitely generated for $i = 0$ and finite for $i > 0$.
\end{proposition}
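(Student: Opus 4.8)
The plan is to realize $\mathscr{H}^i(G,E)$ as the cohomology of an explicit complex of \'etale $K$-group schemes and then transport the computation into the category of Galois modules. Write $\Gamma := \Gal(K_s/K)$ and $M := E(K_s)$, a discrete $\Gamma$-module on which $G$ acts $\Gamma$-equivariantly, so that $M$ is a module over $G$ in the category of discrete $\Gamma$-modules. Recall that the functor $E \mapsto E(K_s)$ is an exact equivalence between \'etale commutative $K$-group schemes and discrete $\Gamma$-modules. The one point requiring real care --- and what I expect to be the crux of the argument --- is that this equivalence is compatible with the smooth-\'etale sheaf structure: a surjection $M_2 \twoheadrightarrow N$ of discrete $\Gamma$-modules must induce an epimorphism of the associated sheaves on the smooth-\'etale site of $\Spec(K)$. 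I would verify this by noting that the corresponding morphism of \'etale $K$-schemes is surjective and \'etale, hence an \'etale covering; pulling it back along any section over a smooth $T$ yields an \'etale (so again smooth over $K$) cover of $T$ over which the section lifts. Granting this, kernels, images, and cokernels of maps of \'etale $K$-group schemes, computed in smooth-\'etale sheaves, agree with those of the corresponding $\Gamma$-modules, and in particular such cokernels are again represented by \'etale $K$-group schemes.

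Next I would fix a resolution $P_\bullet \to \Z$ of the trivial module by finitely generated free $\Z[G]$-modules --- for instance the bar resolution, whose terms $\Z[G^{\bullet+1}]$ are finite free because $G$ is finite. Applying $\Hom_G(P_\bullet, E)$ termwise produces a complex of \'etale $K$-group schemes $E^{n_0} \to E^{n_1} \to \cdots$ with $n_j = |G|^j$, since $\Hom_G(\Z[G^{j+1}], E) = E^{n_j}$ is a finite product of copies of $E$ and the differentials, being induced by the $\Z[G]$-linear maps of the resolution, are $K$-group scheme homomorphisms. For every smooth $K$-scheme $T$, evaluating this complex at $T$ computes $T \mapsto {\rm{H}}^i(G, E(T))$ in degree $i$, because $E(T)$ is a $G$-module and $P_\bullet$ is a free resolution. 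Thus the presheaf $T \mapsto {\rm{H}}^i(G,E(T))$ is the $i$-th cohomology presheaf of this complex of sheaves, and since sheafification is exact, its sheafification $\mathscr{H}^i(G,E)$ is exactly the $i$-th cohomology sheaf. By the compatibility established in the first paragraph, this cohomology sheaf is the \'etale $K$-group scheme attached to the Galois module ${\rm{H}}^i(G,M)$; this simultaneously proves representability and identifies $\mathscr{H}^i(G,E)(K_s) = {\rm{H}}^i(G,M)$ as a discrete $\Gamma$-module (discreteness following since ${\rm{H}}^i(G,-)$ commutes with the filtered colimit $M = \varinjlim_L M^{\Gal(K_s/L)}$).

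Finally, the finiteness claims follow by pure module theory once $E(K_s) = M$ is finitely generated. For $i = 0$ we have ${\rm{H}}^0(G,M) = M^G$, a subgroup of the finitely generated group $M$, hence finitely generated. For every $i$ the group ${\rm{H}}^i(G,M)$ is a subquotient of $\Hom_G(P_i, M) = M^{n_i}$, so it too is finitely generated; and for $i > 0$ it is annihilated by $|G|$ by the standard norm (restriction-corestriction) argument. A finitely generated abelian group of finite exponent is finite, which yields the finiteness of $\mathscr{H}^i(G,E)(K_s)$ for $i > 0$ and completes the proof.
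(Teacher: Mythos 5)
Your proof is correct, and it arrives at the same essential identification as the paper --- namely that $\mathscr{H}^i(G,E)$ is the \'etale $K$-group scheme whose group of $K_s$-points is ${\rm{H}}^i(G, E(K_s))$, after which the finiteness claims are the same elementary module theory (subquotient of a finitely generated group, killed by $|G|$ in positive degrees) --- but the route to representability is genuinely different. The paper disposes of the first assertion in two lines: by \'etale descent one may assume $E$ is the constant group scheme attached to a $G$-module $M$, and then the presheaf $T \mapsto {\rm{H}}^i(G, E(T))$ is already ``constant'' on connected smooth $T$, so its sheafification is visibly the constant scheme on ${\rm{H}}^i(G,M)$. You instead avoid descent altogether: you build the complex $\Hom_G(P_\bullet, E)$ of \'etale $K$-group schemes from the bar resolution and reduce everything to the exactness of the equivalence between \'etale $K$-group schemes and discrete $\Gal(K_s/K)$-modules, viewed inside sheaves on the smooth-\'etale site. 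You correctly isolate the one point that needs checking --- that a surjection of discrete Galois modules induces an epimorphism of smooth-\'etale sheaves, which your lifting argument over an \'etale cover of a smooth $T$ handles --- and your observation that ${\rm{H}}^i(G,-)$ commutes with filtered colimits (so that ${\rm{H}}^i(G,M)$ is again discrete) plugs the only other potential gap. The paper's descent argument is shorter; your complex-level argument is more explicit and makes the functoriality and the identification of $K_s$-points completely transparent, at the cost of carrying the homological machinery through the category of group schemes.
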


\begin{proof}
By \'etale descent, we may assume that $E$ is the constant group scheme associated to a $G$-module $M$. Then $\mathscr{H}^i(G, E)$ is the constant $K$-group scheme associated to the group ${\rm{H}}^i(G, M)$, which proves the first assertion. For the second, it suffices to note that ${\rm{H}}^i(G, M)$ is finitely generated (for instance, by the cocycle definition of group cohomology), and that, for $i > 0$, it is also torsion, hence finite.
\end{proof}

\begin{proposition}
\label{repcohshftm}
For a commutative $K$-group scheme $M$ of finite type equipped with an action by the finite group $G$, $\mathscr{H}^i(G, M)$ is represented by a smooth commutative $K$-group scheme of finite type, which is affine if $M$ is so.
\end{proposition}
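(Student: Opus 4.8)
The plan is to compute $\mathscr{H}^i(G, M)$ as the cohomology of an explicit complex of group schemes and then to settle representability term by term. Let $C^\bullet$ be the standard (inhomogeneous) cochain complex computing group cohomology, so that $C^j = M^{G^j}$ is a finite product, hence a commutative $K$-group scheme of finite type, affine if $M$ is so, and the differentials $d^j\colon C^j \to C^{j+1}$ are the homomorphisms built from the $G$-action by the usual alternating-sum formula. For every $K$-scheme $T$ one has $\mathrm{H}^i(G, M(T)) = \mathrm{H}^i(C^\bullet(T))$, so the presheaf in Definition \ref{gpcohdef} is the $i$-th cohomology presheaf of $C^\bullet$. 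Since \'etale sheafification on the smooth-\'etale site is exact, it commutes with the formation of cohomology, and hence $\mathscr{H}^i(G, M)$ is the $i$-th cohomology of $C^\bullet$ computed in the abelian category of smooth-\'etale sheaves of abelian groups. Concretely, $\mathscr{H}^i(G,M)$ is the cokernel (in smooth-\'etale sheaves) of the homomorphism $C^{i-1} \to \ker(d^i)$ induced by $d^{i-1}$.

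First I would reduce to the case that $M$ is smooth. For any finite-type $K$-group scheme $N$, the inclusion $N^{\sm} \hookrightarrow N$ of the maximal smooth $K$-subgroup scheme induces an isomorphism of smooth-\'etale sheaves: a section of $N$ over a connected smooth $K$-scheme $T$ lands in $N^{\sm}$ generically, since the function field of $T$ is separable over $K$ and $N(L) = N^{\sm}(L)$ for every separable extension $L/K$, and then it lands in $N^{\sm}$ everywhere because $N^{\sm}$ is closed and $T$ is reduced. As the formation of $N^{\sm}$ is functorial it respects the $G$-action, and $(M^{G^j})^{\sm} = (M^{\sm})^{G^j}$, so replacing $M$ by $M^{\sm}$ replaces $C^\bullet$ by a levelwise-isomorphic complex of smooth group schemes. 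Thus we may assume all the $C^j$ are smooth. Computing the cohomology in the smooth-\'etale topos then reduces the problem to two representability statements: that the kernel of a homomorphism of smooth commutative finite-type $K$-group schemes is so representable (clear: take the scheme-theoretic kernel and then its maximal smooth subgroup, affine if the ambient group is), and that the cokernel is as well.

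The heart of the matter is therefore the representability of the smooth-\'etale cokernel of a homomorphism $f\colon A \to B$ of smooth commutative finite-type $K$-group schemes, with affineness when $B$ is affine. Factor $f$ through its scheme-theoretic image $\overline{f(A)} \subseteq B$, a smooth closed $K$-subgroup scheme onto which $A$ maps faithfully flatly. The fppf quotient $B/\overline{f(A)}$ is a smooth commutative finite-type group scheme, affine if $B$ is, and it represents the smooth-\'etale cokernel as soon as $A \twoheadrightarrow \overline{f(A)}$ admits sections \'etale-locally on smooth test schemes; this holds whenever the finite-type kernel of that surjection is smooth, since then the surjection itself is smooth. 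The remaining, and genuinely hard, case is when this kernel has nontrivial infinitesimal part: there the smooth-\'etale cokernel strictly exceeds the fppf cokernel—already for the $p$-power map $\Gm \to \Gm$ in characteristic $p$, whose fppf cokernel vanishes while its \'etale cokernel does not—and one must show that the cokernel of a purely inseparable isogeny of smooth groups is still representable by a smooth finite-type group scheme. I expect this inseparable case to be the main obstacle; it is precisely the reflection in this proposition of the imperfect-field phenomena that the introduction flags as the crux of the positive-characteristic theory, and I would attack it by d\'evissage to powers of the relative Frobenius together with an explicit construction of the resulting (necessarily unipotent and $p$-torsion) cokernel groups, tracking affineness throughout.
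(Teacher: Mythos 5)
Your reduction to smooth $M$, the passage to the cochain complex $C^j = M^{G^j}$, the use of exactness of sheafification, and the treatment of kernels all match the paper's argument and are fine. The genuine gap is at the step you yourself call ``the heart of the matter'': having reduced everything to representing the smooth-\'etale cokernel of $A \twoheadrightarrow \overline{f(A)}$ when the kernel has an infinitesimal part, you offer only a program (``d\'evissage to powers of the relative Frobenius together with an explicit construction'') in place of an argument. Worse, that program cannot succeed as stated, because the lemma it aims at is false: for your own test case $[p]\colon \Gm \to \Gm$ in characteristic $p$, the smooth-\'etale cokernel $Q$ is a nonzero $p$-torsion sheaf --- nonzero because, e.g., the coordinate $x$ on $\Gm$ is not a $p$-th power in any nonempty \'etale neighbourhood, since $dx$ would have to vanish there while \'etale pullback preserves $\Omega^1$ --- and $Q$ receives an epimorphism of sheaves from $\Gm$. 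If $Q$ were a smooth finite type $K$-group scheme, it would be killed by $p$, hence unipotent-by-finite-\'etale, and so would admit no nonzero homomorphism from $\Gm$ at all; this contradicts the surjection. So ``the cokernel of a purely inseparable isogeny of smooth groups is representable by a smooth finite-type group scheme'' is not a hard lemma awaiting proof but a false statement, and a correct proof of the proposition must be organized so as never to need it.

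For comparison, the paper does not perform any Frobenius d\'evissage. It represents the cocycle functor $\mathcal{Z}^i(G,M)$ by a closed subscheme of $M^{G^i}$, forms the fppf quotient $\mathcal{Z}^i(G,M)/\mathrm{im}(d)$ --- a scheme by \cite[Tag 0B8G]{stacks} --- asserts that this already computes the quotient sheaf on the smooth-\'etale site because the coboundaries are the image of the \emph{smooth} group $M^{G^{i-1}}$, and then passes to the maximal smooth $K$-subgroup scheme. In other words, the paper claims the dichotomy you worry about does not arise. You have in fact put your finger on the one sentence of that proof which carries all the weight: the kernel $\mathcal{Z}^{i-1}(G,M)$ of the coboundary map can certainly fail to be smooth (for the trivial action of $\Z/p$ one has $\mathcal{Z}^1(\Z/p,\Gm)=\mu_p$), so smoothness of the source alone does not make the coboundary map smooth onto its image, and the agreement of the smooth-\'etale and fppf quotients is precisely the issue you raise. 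But identifying where the difficulty lives is not the same as resolving it; your write-up neither justifies that agreement nor completes an alternative route, so the proposition is left unproved.
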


\begin{proof}
First, nothing changes if we replace $M$ by its maximal smooth $K$-subgroup scheme, so we may assume that $M$ is smooth. The functor $\mathcal{Z}^n(G, M)$ of $n$-cocycles $G^n \rightarrow M$ is readily checked to be representable by the closed subscheme of $M^{G^n}$ corresponding to the cocycle condition. We also have coboundary maps $d_{n+1}\colon M^{G^n} \rightarrow \mathcal{Z}^{n+1}(G, M)$. Because $M^{G^n}$ is smooth, the quotient group sheaf $Q^i := \mathcal{Z}^i(G, M)/{\rm{im}}(d_i)$ on the smooth-\'etale site of $K$ agrees with the usual fppf quotient group scheme. (That is, the \'etale quotient is already a scheme by \cite[Tag 0B8G]{stacks}, hence an fppf sheaf.) Replacing $Q^i$ by its maximal smooth $K$-subgroup scheme (\cite[Lem.C.4.1, Rem.\,C.4.2]{cgp}) does not change the underlying functor on the smooth-\'etale site of $K$, so we see that $\mathscr{H}^i(G, M)$ is represented by a smooth commutative finite type $K$-group scheme, which is affine if $M$ is.
\end{proof}

\begin{proposition}
\label{repcohshgen}
Let $M$ be a commutative $K$-group scheme with finitely generated component group, and suppose that $M$ comes equipped with an action by a finite group $G$. Then ${\mathscr{H}}^i(G, M)$ is (represented by) a smooth $K$-group scheme with finitely generated component group. For $i > 0$, ${\mathscr{H}}^i(G, M)$ is finite type, and it is affine if $M^0$ is so.
\end{proposition}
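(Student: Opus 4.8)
The plan is to run a dévissage on $M$ using its identity-component filtration. Since $M$ has finitely generated component group, its identity component $M^0$ is a commutative $K$-group scheme of finite type, while $\pi_0(M) := M/M^0$ is an \'etale $K$-group scheme with finitely generated $K_s$-points; both are preserved by $G$ (automorphisms fix the identity component), so there is a $G$-equivariant short exact sequence of sheaves $0 \to M^0 \to M \to \pi_0(M) \to 0$ on the smooth-\'etale site. I would first record that the functors $\mathscr{H}^i(G,-)$ carry such a sequence to a long exact sequence. This is formal: $\mathscr{H}^i(G,\mathscr{M})$ is the $i$-th cohomology sheaf of the sheafified inhomogeneous cochain complex $[\mathscr{M}^{G^\bullet}]$, and since the finite products $(-)^{G^n}$ and \'etale sheafification are exact, a short exact sequence of sheaves of $G$-modules yields a short exact sequence of cochain complexes of sheaves, hence the long exact sequence. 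Feeding in the two cases already handled — Proposition \ref{repcohshftm} for the finite-type $M^0$ and Proposition \ref{H^i(G,et)} for the \'etale $\pi_0(M)$ — I extract from the long exact sequence the short exact sequence of sheaves
\begin{equation*}
0 \longrightarrow \coker(\delta^{i-1}) \longrightarrow \mathscr{H}^i(G,M) \longrightarrow \ker(\delta^i) \longrightarrow 0, \tag{$*$}
\end{equation*}
where $\delta^j\colon \mathscr{H}^j(G,\pi_0(M)) \to \mathscr{H}^{j+1}(G,M^0)$ are the connecting maps and $\coker(\delta^{i-1}) = \mathscr{H}^i(G,M^0)/\im(\delta^{i-1})$.

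The crucial observation — and the step I would be most careful about — is that the two outer terms of $(*)$ are as tame as one could hope, despite $M$ not being of finite type. The term $\ker(\delta^i)$ is a closed subgroup scheme of the \'etale group scheme $\mathscr{H}^i(G,\pi_0(M))$, hence \'etale with $K_s$-points a subgroup of a finitely generated group, which is finite when $i>0$ by Proposition \ref{H^i(G,et)}. For $\coker(\delta^{i-1})$ the naive worry is that $\im(\delta^{i-1})$ might be generated by points of infinite order, rendering the quotient non-representable — precisely the obstruction flagged in the introduction. This does not occur: for $i \geq 1$ the target $\mathscr{H}^i(G,M^0)$ is annihilated by $|G|$, since $H^i(G,-)$ is $|G|$-torsion for $i>0$, so the image of the finitely generated group $\mathscr{H}^{i-1}(G,\pi_0(M))(K_s)$ is a finitely generated torsion group, i.e. finite. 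Thus $\im(\delta^{i-1})$ is a finite \'etale subgroup scheme and $\coker(\delta^{i-1})$ is the quotient of the smooth finite-type group scheme $\mathscr{H}^i(G,M^0)$ by a finite \'etale subgroup — again smooth and of finite type, and affine if $M^0$ is so, quotients of commutative affine group schemes by closed subgroups being affine. For $i=0$ one has $\coker(\delta^{-1}) = \mathscr{H}^0(G,M^0)$ directly.

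It remains to deduce that the extension $(*)$ of an \'etale group scheme by a smooth finite-type group scheme is itself representable by a smooth $K$-group scheme with the asserted properties; this is the main technical point. I would argue by base change to $K_s$, over which $\ker(\delta^i)$ becomes the constant group scheme on a finitely generated (finite if $i>0$) abelian group: the preimage in $\mathscr{H}^i(G,M)_{K_s}$ of each of its points is a torsor under the smooth group scheme $\coker(\delta^{i-1})_{K_s}$, hence trivial, as a nonempty smooth $K_s$-scheme has a $K_s$-point. Therefore $\mathscr{H}^i(G,M)_{K_s}$ is a disjoint union of copies of $\coker(\delta^{i-1})_{K_s}$, in particular a scheme, and one descends through the Galois action. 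Tracking the pieces through $(*)$ then gives exactly the stated conclusions: the identity component is $\coker(\delta^{i-1})^0$, which is smooth, connected, and of finite type; the component group is an extension of a finitely generated group by a finite one, hence finitely generated; and for $i>0$, when $\ker(\delta^i)$ is finite, the whole group scheme is of finite type and is affine whenever $M^0$ is.
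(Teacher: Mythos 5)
Your argument is correct in substance but follows a genuinely different route from the paper's. The paper first replaces $M$ by its maximal smooth $K$-subgroup scheme and then filters by $M^t$ rather than by $M^0$: over $K_s$ the free quotient $E = M/M^t$ is constant and the projection $M \rightarrow E$ splits, giving $M \simeq M^t \times E$ and reducing directly to Propositions \ref{H^i(G,et)} and \ref{repcohshftm} with no connecting maps to analyze. Your d\'evissage through $0 \rightarrow M^0 \rightarrow M \rightarrow \pi_0(M) \rightarrow 0$ instead forces you to confront the connecting homomorphisms, and your key observation --- that $\im(\delta^{i-1})$ is finite for $i \geq 1$ because $\mathscr{H}^i(G,M^0)$ is killed by $|G|$ while the source has finitely generated group of $K_s$-points --- is exactly the right way to rule out the non-representable quotients flagged in the introduction. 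Your final representability step for the extension $(*)$ is precisely the content of Lemma \ref{repettors}, which you could cite instead of redoing the $K_s$-fiber argument; the Galois descent you wave at is most cleanly handled, as in that lemma, by first producing an algebraic space and then invoking \cite[Tag 0B8G]{stacks}, since $K_s/K$ is infinite and naive descent of a locally finite type scheme needs some such device. One thing your route buys is that it avoids choosing a splitting of $M \rightarrow E$, a choice that is not obviously $G$-equivariant.

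There is one genuine, though easily repaired, gap: you must first replace $M$ by its maximal smooth $K$-subgroup scheme (which is $G$-stable and does not change $\mathscr{H}^i(G,M)$, since the latter only sees $M(T)$ for smooth $T$). Without this, your sequence $0 \rightarrow M^0 \rightarrow M \rightarrow \pi_0(M) \rightarrow 0$, while exact for the fppf topology, need not be exact as sheaves on the smooth-\'etale site: surjectivity of $M \rightarrow \pi_0(M)$ there requires each connected component of $M_{K_s}$ to admit a $K_s$-point, which can fail when $M^0$ is non-smooth. For instance, over $K = \F_p(t)$ the extension of $\Z$ by $\mu_p$ classified by the class of $t$ in ${\rm{H}}^1(K, \mu_p) = K^{\times}/(K^{\times})^p$ has degree-one component $\Spec K[x]/(x^p - t)$, which has no point over any smooth $K$-scheme, so the section $1 \in \pi_0(M)(K)$ does not lift even \'etale-locally. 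Once $M$ is smooth, every component of $M_{K_s}$ is a nonempty smooth $K_s$-scheme and the required \'etale-local sections exist; moreover $\pi_0$ of the maximal smooth subgroup is still finitely generated, being an extension of a subgroup of $\pi_0(M)$ by the finite component group of $(M^0)^{\rm{sm}}$. With this first reduction inserted, the rest of your argument goes through.
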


\begin{proof}
As before, we may replace $M$ by its maximal smooth $K$-subgroup scheme and thereby assume that $M$ is smooth. Let $E := M/M^t$ be the torsion-free quotient of the component group of $M$. By Galois descent, we may assume that $K$ is separably closed, so that $E$ is constant and the quotient map $M \rightarrow E$ admits a section, so that $M \simeq M^t \times E$. We are thus reduced to the \'etale and finite type cases, which are handled by Propositions \ref{H^i(G,et)} and \ref{repcohshftm}.
\end{proof}

\begin{remark}
One may of course -- with appropriate modifications -- prove analogues of the above results for cohomology sheaves of group schemes over an arbitrary base. We will never use this, however.
\end{remark}

Let us introduce a definition that describes the type of representability that arises in Theorem \ref{almostreprestpic}.

\begin{definition}
\label{almostrepdef}
We say that a sheaf $\mathscr{F}$ of abelian groups on the smooth-\'etale site of ${\rm{Spec}}(K)$ is {\em almost-representable} when it is of the form $G/E$ for a smooth commutative $K$-group scheme $G$ with finitely generated component group and a subsheaf $E \subset G$ represented by an \'etale $K$-group scheme $E$ with $E(K_s)$ finitely generated.
\end{definition}

We will soon obtain a slight strengthening of the above definition that is sometimes useful (Lemma \ref{quotalmrep}).

\begin{lemma}
\label{exttors}
Let $E, E'$ be \'etale $K$-group schemes with finitely generated groups of $K_s$-points. Then $\Ext^1(E, E')$ is a torsion group.
\end{lemma}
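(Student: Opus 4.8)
The plan is to reduce, via a restriction--corestriction (transfer) argument, to a finite separable extension over which $E$ and $E'$ become constant étale group schemes, and then to compute the resulting $\Ext^1$ groups directly. The only nontrivial input will turn out to be the vanishing $H^1(\text{field}, \Z) = 0$, which is exactly what prevents a free (non-torsion) contribution.

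First I would set up the transfer. Since $E(K_s)$ and $E'(K_s)$ are finitely generated discrete $\Gal(K_s/K)$-modules, both actions factor through a common finite quotient, so I may choose a finite Galois extension $L/K$ of degree $d$ (inside $K_s$) splitting both actions; then $E_L$ and $E'_L$ are the constant étale group schemes associated to the finitely generated abelian groups $M := E(K_s)$ and $M' := E'(K_s)$. For the finite étale morphism $\pi\colon \Spec(L) \to \Spec(K)$ the pushforward $\pi_* = \R_{L/K}$ is exact, and the adjunction $\pi^* \dashv \pi_*$ identifies $\Ext^1_L(E_L, E'_L) \simeq \Ext^1_K(E, \R_{L/K}(E'_L))$. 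Composing with the map induced by the trace $\R_{L/K}(E'_L) \to E'$ furnishes a corestriction $\Cor_{L/K}\colon \Ext^1_L(E_L, E'_L) \to \Ext^1_K(E, E')$, and since the composite of the unit $E' \to \R_{L/K}(E'_L)$ with the trace is multiplication by $d$, one gets $\Cor_{L/K} \circ \Res_{L/K} = [d]$ on $\Ext^1_K(E, E')$. Consequently, if $\Ext^1_L(E_L, E'_L)$ is torsion then $\Res_{L/K}(c)$ is torsion for every $c \in \Ext^1_K(E, E')$, whence $d\,c = \Cor_{L/K}(\Res_{L/K}(c))$ is torsion and so is $c$; thus it suffices to treat the constant case.

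Over $L$, the structure theorem decomposes $M$ and $M'$ into cyclic factors, so by biadditivity of $\Ext^1$ I reduce to $E_L, E'_L \in \{\underline{\Z}, \underline{\Z/n}\}$. Using $\Ext^i_L(\underline{\Z}, A) = H^i(L, A)$ together with $H^1(L, \underline{\Z}) = \Hom_{\mathrm{cont}}(\Gal(K_s/L), \Z) = 0$ (a profinite group has no nontrivial continuous homomorphism to the torsion-free discrete group $\Z$), I would compute: $\Ext^1_L(\underline\Z, \underline\Z) = 0$; $\Ext^1_L(\underline\Z, \underline{\Z/n}) = H^1(L, \underline{\Z/n})$ is $n$-torsion; and, applying $\Hom(-, A)$ to $0 \to \underline\Z \xrightarrow{\,n\,} \underline\Z \to \underline{\Z/n} \to 0$, that $\Ext^1_L(\underline{\Z/n}, A)$ is an extension of the $n$-torsion group $\ker\!\big(n \mid H^1(L, A)\big)$ by the finite group $A(L)/nA(L)$, hence torsion for $A = \underline{\Z}$ or $\underline{\Z/m}$. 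In every case $\Ext^1_L(E_L, E'_L)$ is torsion, which finishes the proof.

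I expect the main obstacle to be the bookkeeping around the transfer, namely checking that corestriction is available in the intended $\Ext$-category and that $\Cor_{L/K} \circ \Res_{L/K} = [L:K]$; everything downstream is a routine long-exact-sequence computation. As a sanity check that isolates the essential point, one can instead split off torsion at the outset: writing $0 \to E_{\tors} \to E \to E^{\mathrm{fr}} \to 0$ and similarly for $E'$, any contribution involving a finite factor is killed by its exponent (multiplication by $n$ on an $n$-torsion étale group scheme is the zero morphism, hence acts as $0$ on the corresponding $\Ext^1$), leaving only $\Ext^1\big(E^{\mathrm{fr}}, (E')^{\mathrm{fr}}\big)$ of two lattices, which the transfer kills because it restricts into $\Ext^1_L(\underline{\Z}^r, \underline{\Z}^s) = 0$.
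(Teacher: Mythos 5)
Your proof is correct, but it is organized differently from the paper's, which is much shorter: the paper first filters $E$ and $E'$ between their torsion subgroups and free quotients to reduce to the case where $E(K_s)$ and $E'(K_s)$ are free of finite rank, then observes that $\Ext^1_{K_s}(E,E')=0$ (extensions of $\underline{\Z}^r$ by $\underline{\Z}^s$ over a separably closed field split since $\Z^r$ is projective), so the local-to-global exact sequence identifies $\Ext^1(E,E')$ with ${\rm{H}}^1(\Gal(K_s/K),\Hom(E(K_s),E'(K_s)))$, which is torsion because higher Galois cohomology with discrete coefficients always is. You instead keep the coefficients general but change the base: you pass to a splitting field $L$ via restriction--corestriction and then compute $\Ext^1_L$ explicitly on cyclic constant factors. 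The two arguments rest on the same underlying fact -- your identity $\Cor_{L/K}\circ\Res_{L/K}=[L:K]$ is precisely the mechanism by which one proves that higher Galois cohomology is torsion, and your vanishing ${\rm{H}}^1(L,\underline{\Z})=0$ is its degree-one torsion-free instance -- but your version is more self-contained (no appeal to the Ext spectral sequence) at the cost of more bookkeeping. All the steps you flag as potential obstacles do go through: $\pi_*$ for finite \'etale $\pi$ is exact and preserves injectives, so the adjunction derives correctly and the corestriction exists in the category of \'etale abelian sheaves (equivalently, discrete Galois modules), and the trace composed with the unit is indeed multiplication by the degree. One small imprecision: in your final "sanity check" you say the transfer "kills" $\Ext^1(E^{\mathrm{fr}},(E')^{\mathrm{fr}})$; it only shows this group is annihilated by $[L:K]$, not that it vanishes -- which is all you need, but worth stating accurately.
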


\begin{proof}
Filtering $E$ and $E'$ between their torsion subgroups and the free quotients, we may assume that $E(K_s)$ and $E'(K_s)$ are free of finite rank. Then $\Ext^1_{K_s}(E, E') = 0$, so $\Ext^1(E, E') = {\rm{H}}^1(\Gal(K_s/K), \Hom(E(K_s), E'(K_s)))$. Since higher Galois cohomology is torsion, the lemma follows.
\end{proof}

The following lemma gives a useful strengthening of the definition of almost-representable sheaves.

\begin{lemma}
\label{quotalmrep}
Let $\mathscr{F}$ be an almost-representable sheaf on the smooth-\'etale site of ${\rm{Spec}}(K)$. Then one has an isomorphism $\mathscr{F} \simeq G/E$, where $G$ is a smooth commutative $K$-group scheme with finitely generated component group, and $E \subset G^t$ is a subsheaf represented by an \'etale $K$-group scheme such that $E(K_s)$ is free of finite rank.
\end{lemma}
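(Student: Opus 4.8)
The plan is to start from an arbitrary almost-representable presentation $\mathscr{F} \simeq G/E$ and to modify it in two stages: first arrange $E \subseteq G^t$ (possibly reintroducing torsion into $E$), and then absorb that torsion back into $G$ to restore freeness. The second stage is the easier one, so I dispose of it first. Suppose $\mathscr{F} \simeq G/E$ with $E$ étale, $E(K_s)$ finitely generated, and $E \subseteq G^t$. Let $E_{\tors} \subseteq E$ be the finite torsion subgroup and replace $(G, E)$ by $(G/E_{\tors},\, E/E_{\tors})$. Then $G/E_{\tors}$ is a smooth commutative $K$-group scheme with finitely generated component group (a quotient of a smooth group by a finite étale subgroup scheme), $E/E_{\tors}$ is étale with $E_s$-points free of finite rank, and $\mathscr{F} \simeq (G/E_{\tors})/(E/E_{\tors})$. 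Since $E_{\tors}$ is finite, passing to this quotient does not alter the maximal free quotient $F := G/G^t$ of the component group; as $E \to F$ was already zero, so is $E/E_{\tors} \to F$, whence $E/E_{\tors} \subseteq (G/E_{\tors})^t$. It therefore suffices to produce, from the given presentation, one with $E \subseteq G^t$.

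For this first and main stage, write $F := G/G^t$, a free finitely generated étale $K$-group scheme, let $\pi\colon G \to F$ be the projection, and set $c := \pi|_E \colon E \to F$ with image $\overline{E} := c(E)$, which is free of finite rank as a subgroup of $F$. The obstruction to $E \subseteq G^t$ is exactly the extension $0 \to \ker c \to E \xrightarrow{c} \overline{E} \to 0$ of étale $K$-group schemes. Its class lies in $\Ext^1(\overline{E}, \ker c)$, which is torsion by Lemma \ref{exttors} (both groups have finitely generated groups of $K_s$-points); say it is killed by $m \geq 1$. Pulling the extension back along $[m]\colon \overline{E} \to \overline{E}$ multiplies its class by $m$ and hence splits it, producing a homomorphism $\lambda\colon \overline{E} \to E$ with $c \circ \lambda = m$. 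Setting $L := \lambda(\overline{E})$, I obtain a free finitely generated étale subgroup $L \subseteq E$ with $c|_L$ injective and $c(L) = m\overline{E}$; in particular $L \cap G^t = L \cap \ker c = 0$.

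Now set $G' := G/L$ and $E' := E/L$, so that $\mathscr{F} \simeq G'/E'$. Because $L \cap G^t = 0$ and $c|_L$ is injective, the preimage $\pi^{-1}(c(L))$ splits as $G^t \oplus L$, so $G'$ is an extension $0 \to G^t \to G' \to F/c(L) \to 0$ of a finitely generated étale group scheme by $G^t$; such an extension is represented by a smooth commutative $K$-group scheme with finitely generated component group (étale-locally it is a disjoint union of $G^t$-torsors, which one descends). Moreover the image of $E'$ in $F/c(L)$ is $\overline{E}/c(L) = \overline{E}/m\overline{E}$, which is torsion, so $E'$ maps to zero in the maximal free quotient of the component group of $G'$; that is, $E' \subseteq (G')^t$. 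Applying the second-stage absorption to $(G', E')$ then yields the desired presentation. I expect the crux to lie in this first stage: the two delicate points are that the relevant extension class is merely torsion rather than zero, which is why a bare splitting is unavailable and the pullback-along-$[m]$ maneuver (via Lemma \ref{exttors}) is needed, and that one must check that the quotient $G/L$ by the non-closed lattice $L$ is genuinely representable by a group scheme of the stated type.
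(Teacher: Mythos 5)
Your proposal is correct and follows essentially the same route as the paper's proof: both reduce the extension class of $0 \to \ker c \to E \to c(E) \to 0$ modulo torsion via Lemma \ref{exttors}, split it after multiplying by an integer, quotient $G$ by the resulting free lift to force $E$ into $G^t$, and then absorb the finite torsion subgroup of $E$ into $G$. The only cosmetic difference is that the paper organizes the first stage as an induction on the rank of the image in $G/G^t$ (and handles representability of the quotients by citing the same extension argument, via Lemma \ref{repettors} and \cite[Tag 0B8G]{stacks}), whereas you do it in a single step.
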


\begin{proof}
Let $\mathscr{F} \simeq H/F$ be a presentation of $\mathscr{F}$ as an almost-representable sheaf. Let $C := H/H^t$ denote the torsion-free quotient of the component group of $H$, and let $\phi\colon F \rightarrow C$ denote the composition $F \subset H \rightarrow C$. First assume that $\phi(F) = 0$. Then $F \subset H^t$. Let $T \subset F$ denote the torsion subgroup. Because $T$ is finite, $\overline{H} := H/T$ is a smooth $K$-group scheme with finitely-generated component group. (It is a scheme by \cite[Tag 0B8G]{stacks}, and smooth by \cite[IV$_1$, Prop.\,17.3.3]{ega}.) Then $\mathscr{F} \simeq H/F = \overline{H}/(F/T)$ gives a presentation of $\mathscr{F}$ of the desired form.

Now assume that $\phi(F) \subset C$ is nonzero. Then $F$ defines an element of $\Ext^1(\phi(F), \ker(\phi))$. By Lemma \ref{exttors}, this element is killed by some $N > 0$. The multiplication by $N$ map induces an isomorphism $\phi(F) \simeq [N]\phi(F)$, so the extension of group schemes
\[
0 \longrightarrow \ker(\phi) \longrightarrow \phi^{-1}([N]\phi(F)) \xlongrightarrow{\phi} [N]\phi(F) \longrightarrow 0
\]
splits. Thus there is a $K$-subgroup scheme $F'$ of $F$ mapping isomorphically onto $[N]\phi(F)$. Then $H/F'$ is an extension of $C/[N]\phi(F)$ by $H^t$, hence (\cite[Tag 0B8G]{stacks} again) representable by a scheme $G'$ with finitely generated component group. Letting $\overline{F} := F/F'$, we have $H/F \simeq G'/\overline{F}$, and $\overline{F}(K_s)$ has strictly smaller rank than $F(K_s)$, hence $G'/\overline{F}$ has the form asserted in the lemma by induction.
\end{proof}

\begin{lemma}
\label{repettors}
Suppose given a short exact sequence of sheaves on the smooth-\'etale site of ${\rm{Spec}}(K)$
\[
1 \longrightarrow H \longrightarrow \mathscr{G} \xlongrightarrow{\pi} Q \longrightarrow 1
\]
with $H$ and $Q$ locally finite type group schemes over $K$. Then $\mathscr{G}$ is represented by a smooth $K$-group scheme, and $\pi$ is a smooth morphism.
\end{lemma}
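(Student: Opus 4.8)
The plan is to reduce to the case where $H$ and $Q$ are smooth, to realize $\pi$ as an \'etale-locally trivial $H$-torsor over $Q$, and then to descend in order to see that $\mathscr{G}$ is a scheme. First I would reduce to $H$ and $Q$ smooth. Because we work on the smooth-\'etale site, a locally finite type $K$-group scheme $N$ and its maximal smooth $K$-subgroup scheme $N^{\mathrm{sm}}$ represent the same sheaf there: one has $N(T) = N^{\mathrm{sm}}(T)$ for every smooth $K$-scheme $T$ (\cite[Lem.\,C.4.1, Rem.\,C.4.2]{cgp}). Hence replacing $H$ by $H^{\mathrm{sm}}$ and $Q$ by $Q^{\mathrm{sm}}$ leaves the exact sequence of sheaves unchanged, and we may assume $H$ and $Q$ are smooth.

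Next, since $\pi$ is an epimorphism of sheaves on the smooth-\'etale site and $Q$ is an object of that site, there is an \'etale cover $U \to Q$, with $U$ smooth over $K$, over which $\pi$ admits a section $s \in \mathscr{G}(U)$. The kernel $H$ acts simply transitively on the fibers of $\pi$, so $\pi$ is an $H$-torsor, and $s$ trivializes it: the map $(h, u) \mapsto h \cdot s(u)$ defines an isomorphism of sheaves $H \times_K U \cong \mathscr{G} \times_Q U$. In particular $\mathscr{G} \times_Q U$ is represented by the smooth $K$-scheme $H \times_K U$, and the same trivialization over $U \times_Q U$ shows that $R := \mathscr{G} \times_Q (U \times_Q U) \cong H \times_K (U \times_Q U)$ is also a smooth $K$-scheme.

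Finally I would descend. The two projections $R \rightrightarrows \mathscr{G} \times_Q U$ (base-changed from the \'etale maps $U \times_Q U \rightrightarrows U$) form an \'etale equivalence relation on the scheme $\mathscr{G} \times_Q U$ whose quotient \'etale sheaf is $\mathscr{G}$. Thus $\mathscr{G}$ is an algebraic space, locally of finite type over $K$ because $\mathscr{G} \times_Q U = H \times_K U$ is. As it is a group object, it is in fact a scheme, since every group algebraic space locally of finite type over a field is a scheme (\cite[Tag 0B8G]{stacks}); it is smooth over $K$ because it admits the \'etale surjective chart $H \times_K U \to \mathscr{G}$ with $H \times_K U$ smooth. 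Smoothness of $\pi$ may be checked after the fppf base change $U \to Q$, where it becomes the projection $H \times_K U \to U$, which is smooth; hence $\pi$ is smooth.

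I expect the descent step to be the main obstacle. \'Etale (and fppf) descent is not effective for arbitrary schemes, so a priori the quotient is only an algebraic space; the essential input upgrading it to a scheme is the theorem that group algebraic spaces locally of finite type over a field are schemes. Indeed, for a bare $H$-torsor with non-affine $H$ (e.g.\ $H$ an abelian variety) scheme-ness would not be automatic, and it is precisely the group structure on $\mathscr{G}$ that saves us. By contrast, the reduction to smooth $H$ and $Q$ and the trivialization of the torsor are routine.
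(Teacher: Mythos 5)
Your overall route coincides with the paper's: replace $H$ and $Q$ by their maximal smooth subgroup schemes, trivialize $\pi$ as an $H$-torsor over an \'etale cover $U \rightarrow Q$, and invoke \cite[Tag 0B8G]{stacks} to pass from a group algebraic space to a scheme. But there is a genuine gap at the last step: you quote Tag 0B8G as saying that \emph{every} group algebraic space locally of finite type over a field is a scheme, and that is false. The actual statement requires the group algebraic space to be \emph{separated}; without a separation hypothesis the conclusion fails. For instance, in characteristic $0$ the quotient $\Ga/\underline{\Z}$ of $\Ga$ by the constant subgroup $\underline{\Z}$ is a group algebraic space locally of finite type over $K$ that is not quasi-separated and is not a scheme. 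So your descent step produces an algebraic space $\mathscr{G}$, but as written you have not verified the hypothesis needed to upgrade it to a scheme.

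This is exactly where the paper does its real work: it shows that the diagonal $\mathscr{G} \hookrightarrow \mathscr{G}\times\mathscr{G}$ is a closed immersion by base-changing to the chart $Y \times Y$ with $Y = H \times U$, using first that $Q$ is separated (so the locus where the two points have equal image in $Q$ is closed) and then that $H$ is separated (so the locus where $h_1 s(x_1)\bigl(h_2 s(x_2)\bigr)^{-1} = 1$ is closed). Equivalently, one can factor the diagonal as $\mathscr{G} \rightarrow \mathscr{G}\times_Q \mathscr{G} \rightarrow \mathscr{G}\times\mathscr{G}$, the second map being a base change of $\Delta_Q$ and the first being identified with $\{1\}\times \mathscr{G} \hookrightarrow H \times \mathscr{G}$. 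Either way, the separatedness of $\mathscr{G}$ must be proved, not assumed; once it is, the rest of your argument (including the smoothness of $\mathscr{G}$ via the \'etale chart and of $\pi$ via base change to $U$) goes through and matches the paper.
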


\begin{proof}
We first show that $\mathscr{G}$ is a scheme. By \cite[Tag 0B8G]{stacks}, it suffices to show that $\mathscr{G}$ is a separated algebraic space over $K$. First, replacing $H$ and $Q$ by their maximal smooth $K$-subgroup schemes has no effect on the resulting sheaves, so we may assume that both groups are smooth. Choose an \'etale surjection $f\colon X \rightarrow Q$ such that $f$ lifts to a point $s \in \mathscr{G}(X)$. Then $Y := \mathscr{G} \times_Q X \simeq H \times X$ via $(g, x) \mapsto (gs(x)^{-1}, x)$, so this gives a (locally finite type) $K$-scheme admitting an \'etale surjection onto $\mathscr{G}$. It remains to prove that $\mathscr{G}$ is separated.

We must show that the diagonal map $\Delta\colon \mathscr{G} \hookrightarrow \mathscr{G} \times \mathscr{G}$ is a closed embedding. This may be done after base change to some \'etale cover of the target. We take as our cover the scheme $Y \times Y$. Thus we wish to verify that the top row of the Cartesian diagram
\[
\begin{tikzcd}
Y \times_{\mathscr{G}} Y \arrow[r, hookrightarrow] \arrow{d} \arrow[dr, phantom, "\square"] & Y \times Y \arrow{d} \\
\mathscr{G} \arrow{r}{\Delta} & \mathscr{G} \times \mathscr{G}
\end{tikzcd}
\]
is a closed embedding. Now $Y \times_{\mathscr{G}} Y$ is the subscheme of $Y \times Y = (H \times X)^2$ defined by the condition $h_1s(x_1) = h_2s(x_2)$. This condition in particular implies that $f(x_1) = f(x_2)$, so $Y \times_{\mathscr{G}} Y$ is contained in the preimage $Z \subset Y \times Y$ of the diagonal under the map $Y \times Y \rightarrow Q$. Because $Q$ -- being a locally finite type group scheme over a field -- is separated, $Z$ is a closed subscheme of $Y \times Y$. Finally, $Y \times_{\mathscr{G}} Y$ is the preimage of $1 \in H(K)$ under the map $Z \rightarrow H$, $(h_1, x_1, h_2, x_2) \mapsto h_1s(x_1)(h_2s(x_2))^{-1}$, hence it is a closed subscheme of $Z$.

Replacing a representing object for $\mathscr{G}$ by its maximal smooth $K$-subgroup, we may assume that $\mathscr{G}$ is a smooth $K$-group, and we wish to show that $\pi$ is smooth. This may be checked after an \'etale base change, hence it is equivalent to check that $Y \simeq H \times X \rightarrow X$ is smooth, and this is clear.
\end{proof}

We now establish certain permanence properties for almost-representable sheaves.

\begin{lemma}
\label{keralrepmap}
The kernel of a homomorphism between almost-representable sheaves is almost-representable.
\end{lemma}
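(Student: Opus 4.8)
The plan is to realize the kernel as the scheme-theoretic kernel of an honest homomorphism of group schemes, thereby bypassing the fact that the target sheaves are not themselves schemes. Write $\mathscr{F}_1 \simeq G_1/E_1$ and $\mathscr{F}_2 \simeq G_2/E_2$ as presentations of almost-representable sheaves, with quotient maps $\pi_i \colon G_i \to \mathscr{F}_i$, and let $f \colon \mathscr{F}_1 \to \mathscr{F}_2$ be the given homomorphism. Since $\pi_1$ is an epimorphism of sheaves with kernel $E_1$, the subsheaf $M := \ker(f \circ \pi_1) \subseteq G_1$ contains $E_1$ and satisfies $\ker f \simeq M/E_1$ (pull back $\ker f \hookrightarrow \mathscr{F}_1$ along the epimorphism $\pi_1$). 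Thus it suffices to show that $M$ is represented by a locally finite type $K$-group scheme with finitely generated component group: then, replacing $M$ by its maximal smooth $K$-subgroup scheme $M^{\rm sm}$ — which contains the smooth (indeed étale) group $E_1$ and does not alter the associated sheaf on the smooth-\'etale site, by \cite[Lem.\,C.4.1, Rem.\,C.4.2]{cgp} — the presentation $\ker f \simeq M^{\rm sm}/E_1$ exhibits $\ker f$ as almost-representable in the sense of Definition \ref{almostrepdef}, using that $E_1$ is \'etale with $E_1(K_s)$ finitely generated.

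The difficulty is that $M$ is the kernel of a map into the sheaf $\mathscr{F}_2$, which is not a scheme, so its representability is not automatic. To remedy this, I would form the fiber product $P := G_1 \times_{\mathscr{F}_2} G_2$, taken along $f \circ \pi_1$ and $\pi_2$. Pulling back the exact sequence $1 \to E_2 \to G_2 \xrightarrow{\pi_2} \mathscr{F}_2 \to 1$ along $f \circ \pi_1$ realizes $P$ as an extension of sheaves
\[
1 \longrightarrow E_2 \longrightarrow P \xrightarrow{\; p \;} G_1 \longrightarrow 1,
\]
with $p$ the first projection (here $\ker p = \{1\} \times E_2$ and $p$ is surjective as the pullback of the epimorphism $\pi_2$). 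Since $E_2$ is \'etale and $G_1$ is smooth, both locally of finite type, Lemma \ref{repettors} shows that $P$ is represented by a smooth $K$-group scheme and that $p$ is smooth. Moreover $P$ has finitely generated component group, being an extension of the finitely-generated-component-group scheme $G_1$ by an \'etale group with $E_2(K_s)$ finitely generated.

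Now let $q := \mathrm{pr}_2 \colon P \to G_2$ be the second projection, a homomorphism of smooth $K$-group schemes satisfying $\pi_2 \circ q = f \circ \pi_1 \circ p$. A direct computation gives
\[
\ker q = \{(g_1, 0) \in P : f\pi_1(g_1) = 0\},
\]
so that $p$ restricts to an isomorphism $\ker q \xrightarrow{\sim} M$ of subsheaves. Since $q$ is a homomorphism of locally finite type $K$-group schemes and $G_2$ is separated, $\ker q$ is a closed $K$-subgroup scheme of $P$; hence $M \simeq \ker q$ is represented by a locally finite type $K$-group scheme. Its component group is finitely generated, because the closed subgroup scheme $\ker q \subseteq P$ meets the finite type group $P^0$ in a finite type group (so with finite component group) while mapping to the finitely generated group $\pi_0(P_{K_s})$; passing to the maximal smooth subgroup $M^{\rm sm}$ does not change the component group. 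Feeding $M$ back into the reduction of the first paragraph then completes the proof. The one genuinely non-formal step is precisely the passage from the naive kernel $M \subseteq G_1$ — a kernel over the non-representable sheaf $\mathscr{F}_2$ — to the scheme-theoretic kernel $\ker q$; once the fiber product $P$ is in hand and shown to be a scheme via Lemma \ref{repettors}, the remaining verifications (exactness of the extension, the identification $\ker q \simeq M$, and finite generation of the component groups) are routine.
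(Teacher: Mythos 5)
Your proof is correct and follows essentially the same route as the paper: both form the fiber product $G_1 \times_{\mathscr{F}_2} G_2$, invoke Lemma \ref{repettors} to see that it is a smooth $K$-group scheme with finitely generated component group, and then realize the kernel as the quotient of a closed subgroup scheme (passed to its maximal smooth subgroup) by an \'etale group with finitely generated $K_s$-points. The only difference is cosmetic: the paper presents $\ker f$ as $\eta^{-1}(E_2)$ modulo an extension of $E_1$ by $E_2$, whereas you strip off the redundant $E_2$-factor and present it as $\ker(\mathrm{pr}_2) \simeq \ker(f\circ\pi_1)$ modulo $E_1$, which is marginally cleaner.
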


\begin{proof}
Let $\phi\colon \mathscr{F} \rightarrow \mathscr{G}$ be the map. Write $\mathscr{F} \simeq F/E_F$ and $\mathscr{G} = G/E_G$, where $F, G$ are smooth commutative $K$-group schemes with finitely-generated component groups, and $E_F\subset F$, $E_G \subset G$ are subsheaves represented by \'etale $K$-group schemes with finitely generated groups of $K_s$-points. Let $q_F\colon F \rightarrow \mathscr{F}$ denote the quotient map and similarly for $q_G\colon G \rightarrow \mathscr{G}$. Let $W$ be the sheaf defined by the following Cartesian diagram:
\[
\begin{tikzcd}
W \arrow{rr}{\eta} \arrow{d}{\zeta} \arrow[drr, phantom, "\square"] && G \arrow{d}{q_G} \\
F \arrow{r}{q_F} & \mathscr{F} \arrow{r}{\phi} & \mathscr{G}
\end{tikzcd}
\]
The map $q_G$ is surjective with kernel isomorphic to $E_G$, hence the same holds for $\zeta$. Lemma \ref{repettors} therefore implies that $W$ is a smooth $F$-group scheme. Furthermore, the subsheaf $\eta^{-1}(E_G) \subset W$ surjects onto $\ker(\phi)$ via $q_F\circ \zeta$. Call this surjection $\delta$. Then $\ker(\delta) = \zeta^{-1}(E_F)$. This is an extension of $E_F$ by $E_G$, hence \'etale with finitely generated component group. In order to prove the lemma, therefore, it is enough to show that $U := \eta^{-1}(E_G) \subset W$ is a smooth $K$-group scheme with finitely generated component group.

The sheaf $U$ is defined by the following Cartesian diagram:
\[
\begin{tikzcd}
U \arrow{r} \arrow{d} \arrow[dr, phantom, "\square"] & W \arrow{d}{\eta} \\
E_G \arrow[r, hookrightarrow] & G
\end{tikzcd}
\]
It follows that $U$ is computed by the locally finite type $K$-group scheme $S$ defined by the same Cartesian diagram in the category of schemes, and replacing $S$ by its maximal smooth $K$-subgroup scheme, we see that $U$ is a smooth $K$-group scheme. It remains to prove that it has finitely generated component group. Because $U$ is a closed subgroup subscheme of $S$, it is enough to prove the same for $S$. 

Because $W$ is an $E_G$-torsor over $F$, it has finitely generated component group, hence the same holds for the closed $K$-subgroup scheme $\ker(\eta)_{\rm{gp}} \subset W$, defined as the kernel in the category of $K$-group schemes. (Note: This is not generally the same as the kernel in the category of sheaves on the smooth-\'etale site of $K$. The relationships is that the latter is the maximal smooth $K$-subgroup scheme of the former.) But $\ker(\eta)_{\rm{gp}} = \ker(S \rightarrow E_G)_{\rm{gp}}$. It follows that $S$ has finitely generated component group.
\end{proof}

\begin{lemma}
\label{Ginvalrep}
Let $\mathscr{F}$ be a sheaf on the smooth-\'etale site of $K$, equipped with an action by a finite group $G$. If $\mathscr{F}$ is almost-representable, then so is the sheaf $\mathscr{F}^G$ of $G$-invariants.
\end{lemma}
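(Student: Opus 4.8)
The plan is to exhibit $\mathscr{F}^G$ as the kernel of a homomorphism between almost-representable sheaves, so that the conclusion follows at once from Lemma \ref{keralrepmap}.

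The first ingredient I would record is that almost-representability is preserved under finite products. Indeed, if $\mathscr{F}_i \simeq H_i/E_i$ for $i = 1, \dots, n$ are almost-representable in the sense of Definition \ref{almostrepdef}, then $\prod_i \mathscr{F}_i \simeq (\prod_i H_i)/(\prod_i E_i)$. A finite product of smooth commutative $K$-group schemes with finitely generated component groups is again of that type, and a finite product of \'etale $K$-group schemes with finitely generated groups of $K_s$-points is again of that type; hence $\prod_i \mathscr{F}_i$ is almost-representable. In particular, since $\mathscr{F}$ is almost-representable, so is $\prod_{g \in G} \mathscr{F}$.

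Next I would realize the invariants as a kernel. As $G$ acts on the sheaf of abelian groups $\mathscr{F}$ by automorphisms, the assignment
\[
\phi\colon \mathscr{F} \longrightarrow \prod_{g \in G} \mathscr{F}, \qquad x \mapsto (g\cdot x - x)_{g \in G},
\]
is a homomorphism of sheaves of abelian groups on the smooth-\'etale site of $\Spec(K)$, and by construction $\ker(\phi) = \mathscr{F}^G$. No sheafification intervenes here: the invariants functor $T \mapsto \mathscr{F}(T)^G$ is an equalizer of maps of sheaves, hence is itself a sheaf and coincides with the sheaf-theoretic kernel of $\phi$.

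Finally, applying Lemma \ref{keralrepmap} to the homomorphism $\phi$ between the almost-representable sheaves $\mathscr{F}$ and $\prod_{g \in G}\mathscr{F}$ shows that $\mathscr{F}^G = \ker(\phi)$ is almost-representable, as desired. I do not anticipate any serious obstacle: the only points requiring a moment's care are the closure of almost-representability under finite products and the observation that the invariants functor is already a sheaf, so that $\mathscr{F}^G$ is genuinely $\ker(\phi)$ rather than merely its \'etale sheafification.
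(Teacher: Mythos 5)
Your proposal is correct and is essentially the paper's own proof: the paper likewise applies Lemma \ref{keralrepmap} to the map $\mathscr{F} \rightarrow \prod_{g \in G}\mathscr{F}$, $f \mapsto ((g-1)f)_g$. The two supporting observations you spell out (closure of almost-representability under finite products, and the invariants functor already being a sheaf) are left implicit in the paper but are exactly the right points to verify.
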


\begin{proof}
Apply Lemma \ref{keralrepmap} to the map $\mathscr{F} \rightarrow \prod_{g \in G}\mathscr{F}$ defined by $f \mapsto ((g-1)f)_g$.
\end{proof}

\begin{lemma}
\label{extalrepaff}
A commutative extension of an almost-representable group sheaf by a finite type $K$-group scheme with finitely generated component group is almost-representable.
\end{lemma}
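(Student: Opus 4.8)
Denote the given extension by $0 \rightarrow N \rightarrow \mathscr{E} \xrightarrow{\psi} \mathscr{F} \rightarrow 0$, where $N$ is a finite type $K$-group scheme with finitely generated component group and $\mathscr{F}$ is almost-representable. The plan is to reduce to an honest extension of group schemes by pulling back along a presentation of $\mathscr{F}$, invoke Lemma \ref{repettors} to see that the pullback is a scheme, and then recover $\mathscr{E}$ as a quotient by an \'etale subgroup.

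First I would fix a presentation $\mathscr{F} \simeq G/E$ as an almost-representable sheaf, with quotient map $q\colon G \rightarrow \mathscr{F}$; here $G$ is a smooth commutative $K$-group scheme with finitely generated component group and $E \subset G$ is an \'etale $K$-group scheme with $E(K_s)$ finitely generated. Form the fiber product $\mathscr{E}' := \mathscr{E} \times_{\mathscr{F}} G$, with its two projections. Since $q$ is an epimorphism of sheaves with kernel $E$, and since epimorphisms and kernels are stable under base change in a topos, the projection $p\colon \mathscr{E}' \rightarrow \mathscr{E}$ is again an epimorphism with kernel $E$, so that $\mathscr{E} \simeq \mathscr{E}'/E$. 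On the other hand, pulling back $\psi$ along $q$ yields a short exact sequence
\[
0 \rightarrow N \rightarrow \mathscr{E}' \xrightarrow{\eta} G \rightarrow 0
\]
of sheaves, in which $\mathscr{E}'$ is commutative, being a subsheaf of $\mathscr{E} \times G$.

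Now $N$ is locally finite type and $G$ is smooth, hence locally finite type, so Lemma \ref{repettors} applies to this last sequence and shows that $\mathscr{E}'$ is represented by a smooth commutative $K$-group scheme and that $\eta$ is smooth. Granting for a moment that $\mathscr{E}'$ has finitely generated component group, the isomorphism $\mathscr{E} \simeq \mathscr{E}'/E$ exhibits $\mathscr{E}$ as almost-representable in the sense of Definition \ref{almostrepdef}, completing the proof. For the component group I would argue on geometric points: since $\eta$ is faithfully flat and locally of finite type, it is open, so the image $\eta((\mathscr{E}')^0)$ is an open, hence closed, connected subgroup of $G$, forcing $\eta((\mathscr{E}')^0) = G^0$. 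Applying the snake lemma to the inclusion of the resulting identity-component sequence into the sequence for $\eta$ then produces a right-exact sequence
\[
\pi_0(N) \rightarrow \pi_0(\mathscr{E}') \rightarrow \pi_0(G) \rightarrow 0,
\]
and since $\pi_0(N)(K_s)$ and $\pi_0(G)(K_s)$ are finitely generated, so is $\pi_0(\mathscr{E}')(K_s)$.

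The construction is essentially formal, so I do not anticipate a serious obstacle. The only point demanding genuine care is the finite generation of the component group of $\mathscr{E}'$: one must be slightly attentive because $N$ need not be smooth or connected, so the map on identity components has kernel $N \cap (\mathscr{E}')^0$ (which may be strictly larger than $N^0$), and the resulting snake-lemma sequence is only right exact rather than short exact. Neither issue causes difficulty, since a quotient of a finitely generated group is finitely generated and an extension of finitely generated groups is finitely generated.
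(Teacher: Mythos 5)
Your proposal is correct and follows essentially the same route as the paper: the paper forms exactly your fiber product $\mathscr{E}' = \mathscr{E} \times_{\mathscr{F}} G$ (called $W$ there), invokes Lemma \ref{repettors} on the resulting extension $0 \to N \to W \to G \to 0$ to get a smooth group scheme with finitely generated component group, and concludes $\mathscr{F} \simeq W/E$. Your only addition is a more detailed (and sound) justification of the finite generation of the component group, which the paper reads off directly from the middle row.
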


\begin{proof}
Suppose given an extension $\mathscr{F}$ of $G/E$ by $H$ with $H$ an affine $K$-group scheme of finite type, $G$ a smooth commutative $K$-group scheme with finitely generated component group, and $E \subset G$ an \'etale $K$-group scheme with $E(K_s)$ free of finite rank. Replacing $H$ by its maximal smooth $K$-subgroup scheme, we may as well assume that $H$ is smooth. Consider the following commutative diagram with exact rows and columns, in which $W$ is defined by the indicated Cartesian diagram:
\[
\begin{tikzcd}
&& E \arrow[d, hookrightarrow] \arrow[r, equals] & E \arrow[d, hookrightarrow] & \\
0 \arrow{r} & H \arrow[d, equals] \arrow{r} & W \arrow[d, twoheadrightarrow] \arrow{r} \arrow[dr, phantom, "\square"] & G \arrow[d, twoheadrightarrow] \arrow{r} & 0 \\
0 \arrow{r} & H \arrow{r} & \mathscr{F} \arrow{r} & G/E \arrow{r} & 0
\end{tikzcd}
\]
By Lemma \ref{repettors}, $W$ is a (smooth) $G$-scheme. Furthermore, the middle row shows that $W$ has finitely generated component group. It follows that $\mathscr{F} \simeq W/E$ is almost-representable.
\end{proof}

We may now show that almost-representability of the restricted Picard functor descends through finite Galois covers.

\begin{proposition}
\label{alrepdescgal}
Let $Y \rightarrow X$ be a finite Galois cover of normal $K$-schemes of finite type. If $\Pic^+_{Y/K}$ is almost-representable, then so is $\Pic^+_{X/K}$.
\end{proposition}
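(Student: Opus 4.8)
The plan is to use Galois descent for line bundles, in the shape of the Hochschild--Serre (Cartan--Leray) spectral sequence, to realize $\Pic^+_{X/K}$ as an extension assembled from the almost-representable sheaf $\Pic^+_{Y/K}$ and the group-cohomology sheaves of the units of $Y$, and then to feed this into the permanence lemmas proved above. Let $G = \Gal(Y/X)$, so that $Y \to X$ is a $G$-torsor for the \'etale topology. For any smooth $K$-scheme $T$, base change produces a $G$-torsor $Y_T \to X_T$ of normal finite-type $K$-schemes (normality being preserved because $X_T \to X$ is smooth). The low-degree exact sequence of the Cartan--Leray spectral sequence $H^p(G, H^q_{\et}(Y_T, \Gm)) \Rightarrow H^{p+q}_{\et}(X_T, \Gm)$ then gives, functorially in $T$, a four-term exact sequence
\[
0 \longrightarrow H^1(G, \Gamma(Y_T, \calO)^\times) \longrightarrow \Pic(X_T) \longrightarrow \Pic(Y_T)^G \longrightarrow H^2(G, \Gamma(Y_T, \calO)^\times).
\]

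Next I would sheafify this sequence of presheaves over the smooth-\'etale site. Since \'etale sheafification is exact and commutes with the finite limit defining $G$-invariants, and since $\Gamma(Y_T, \calO)^\times = \mathrm{Units}_{Y/K}(T)$ for smooth $T$ by property (ii) of Proposition \ref{repofunits}, the sheafified sequence reads
\[
0 \longrightarrow \mathscr{H}^1(G, {\rm{Units}}_{Y/K}) \longrightarrow \Pic^+_{X/K} \longrightarrow (\Pic^+_{Y/K})^G \xrightarrow{\ \partial\ } \mathscr{H}^2(G, {\rm{Units}}_{Y/K}),
\]
where the cohomology sheaves are those of Definition \ref{gpcohdef}, the $G$-action on $\mathrm{Units}_{Y/K}$ being induced from the one on $Y$ over $X$ via the uniqueness in Proposition \ref{repofunits}. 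By Proposition \ref{repofunits} the group $\mathrm{Units}_{Y/K}$ is a smooth commutative $K$-group scheme with finitely generated component group whose identity component $\R_{A/K}(\Gm)$ is affine of finite type; Proposition \ref{repcohshgen} then shows that $\mathscr{H}^1(G, {\rm{Units}}_{Y/K})$ and $\mathscr{H}^2(G, {\rm{Units}}_{Y/K})$ are smooth affine finite-type $K$-group schemes with finitely generated component group.

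It remains to assemble the pieces. The sheaf $(\Pic^+_{Y/K})^G$ is almost-representable by Lemma \ref{Ginvalrep} applied to the hypothesis on $\Pic^+_{Y/K}$, while a smooth finite-type $K$-group scheme with finitely generated component group is trivially almost-representable, so $\mathscr{H}^2(G, {\rm{Units}}_{Y/K})$ is too. Hence $\ker(\partial)$, being the kernel of a homomorphism of almost-representable sheaves, is almost-representable by Lemma \ref{keralrepmap}. Finally, the displayed sequence exhibits $\Pic^+_{X/K}$ as an extension of $\ker(\partial)$ by the finite-type affine $K$-group scheme $\mathscr{H}^1(G, {\rm{Units}}_{Y/K})$, so Lemma \ref{extalrepaff} yields that $\Pic^+_{X/K}$ is almost-representable, as desired.

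The step requiring the most care is the passage from the presheaf-level exact sequence to the sheaf-level one: one must check that sheafification commutes both with the formation of $G$-invariants (a finite limit, hence preserved by the exact sheafification functor) and with the group-cohomology presheaves (which is precisely the definition of $\mathscr{H}^i(G, -)$), and that the identification $\Gamma(Y_T, \calO)^\times = \mathrm{Units}_{Y/K}(T)$ holds compatibly with the $G$-action. Once the four-term exact sequence of sheaves is in hand, the remainder is a purely formal application of the permanence lemmas.
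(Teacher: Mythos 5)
Your argument coincides with the paper's own proof: both invoke the Hochschild--Serre spectral sequence for the $G$-cover $Y_T \to X_T$, sheafify the low-degree exact sequence to get $0 \to \mathscr{H}^1(G, {\rm{Units}}_{Y/K}) \to \Pic^+_{X/K} \to (\Pic^+_{Y/K})^G \to \mathscr{H}^2(G, {\rm{Units}}_{Y/K})$, identify the outer terms via Propositions \ref{repofunits} and \ref{repcohshgen}, and conclude with Lemmas \ref{Ginvalrep}, \ref{keralrepmap}, and \ref{extalrepaff}. The proof is correct and takes essentially the same route, with your final paragraph merely making explicit the sheafification bookkeeping that the paper leaves implicit.
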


\begin{proof}
Let $G$ be the Galois group. Then $Y_T \rightarrow X_T$ is a $G$-cover for any $K$-scheme $T$, and we have the Hochschild-Serre spectral sequence, functorial in $T$,
\[
E_2^{i,j} = {\rm{H}}^i(G, {\rm{H}}^j(Y_T, \Gm)) \Longrightarrow {\rm{H}}^{i+j}(X_T, \Gm).
\]
By sheafifying the low-degree exact sequence associated to this spectral sequence, we obtain an exact sequence of sheaves
\[
0 \longrightarrow \mathscr{H}^1(G, {\rm{Units}}_{Y/K}) \longrightarrow \Pic^+_{X/K} \longrightarrow (\Pic^+_{Y/K})^G \xlongrightarrow{f} \mathscr{H}^2(G, {\rm{Units}}_{Y/K}).
\]
Propositions \ref{repofunits} and \ref{repcohshgen} imply that ${\rm{H}}^i(G, {\rm{Units}}_{Y/K})$ is a smooth affine $K$-scheme for $i > 0$. Combining Lemmas \ref{Ginvalrep} and \ref{keralrepmap}, therefore, we deduce that $\ker(f)$ is almost-representable. Lemma \ref{extalrepaff} then implies that $\Pic^+_{X/K}$ is almost-representable. 
\end{proof}

\section{Boundedness}
\label{boundednesssec}

We will ultimately prove the almost-representability of $\Pic^+_{X/K}$ by relating line bundles on $X$ to line bundles on a suitable fppf cover $Y$ of $X$ together with descent data. A descent datum for a line bundle amounts to a trivialization of a certain associated line bundle. We will have to understand those trivializations that show up as descent data, and in particular show that they yield a representable functor. The key to accomplishing this is the notion of {\em boundedness} for a family of trivializations.

\begin{definition}
\label{boundedness}(Boundedness)
Let $Z$ be a qcqs $K$-scheme, and let $V \subset \Gamma(Z, \calO_Z)$ be a finite-dimensional $K$-vector space. Suppose given a $K$-scheme $T$ and $s \in \Gamma(Z \times_K T, \calO_{Z \times_K T})$. We say that the pair $(T, s)$ is {\em $V$-bounded} when $s \in V \otimes_K \Gamma(T, \calO_T)$.

Let $S$ be a $K$-scheme, and let $\mathscr{L}, \mathscr{M}$ be line bundles on $Z \times_K S$, and $\phi\colon \mathscr{L} \xrightarrow{\sim} \mathscr{M}$ an isomorphism. Suppose given a pair $(g, \psi)$ with $g\colon T \rightarrow S$ an $S$-scheme and $\psi\colon ({\rm{Id}}_Z, g)^*(\mathscr{L}) \xrightarrow{\sim} ({\rm{Id}}_Z, g)^*(\mathscr{M})$ an isomorphism. We say that $(g, \psi)$ is {\em $(\phi, V)$-bounded} when the pair $$(T, \psi\circ({\rm{Id}}_Z, g)^*(\phi)^{-1}) \in \Gamma(Z\times_K T, \Gm))$$ and its inverse $(T, ({\rm{Id}}_Z, g)^*(\phi)\circ \psi^{-1})$ are both $V$-bounded.

Fix $\phi$. We say that a family $F := \{(g_{\alpha}, \psi_{\alpha})\}_{\alpha \in A}$ of pairs as above is {\em bounded} when there exists a finite-dimensional $V$ such that every element of the family is $(\phi, V)$-bounded. Note that this is independent of $\phi$ if $S$ is quasi-compact. In particular, we say (without reference to any $\phi$) that $F$ is {\em locally bounded} when there is a Zariski cover $\{U_i\}_{i \in I}$ of $S$ such that, for each $i$, $U_i$ is quasi-compact and $F_i := \{(g_{\alpha,i}, \psi_{\alpha,i})\}$ is bounded, where $g_{\alpha,i} := g_{\alpha}|g_{\alpha}^{-1}(U_i)$ and $\psi_{\alpha,i} = \psi_{\alpha}|{Z \times_K g_{\alpha}^{-1}(U_i)}$. Note that we require the existence of an isomorphism $\phi$ to make this definition, but the definition is independent of which isomorphism we choose.
\end{definition}

\begin{proposition}
\label{bdedrepble}
Fix $Z, S, \mathscr{L}, \mathscr{M}, \phi, V$ as in Definition $\ref{boundedness}$ with $S$ a finite type $K$-scheme. Then the set-valued functor on the category of $K$-schemes sending $T$ to the set of all $(\phi, V)$-bounded pairs $(g, \psi)$ with $g\colon T \rightarrow S$ a $K$-morphism and $\psi\colon ({\rm{Id}}_Z, g)^*(\mathscr{L}) \xrightarrow{\sim} ({\rm{Id}}_Z, g)^*(\mathscr{M})$ an isomorphism, is represented by a finite type $K$-scheme ${\rm{Isom}}_V$. Furthermore, for $V \subset W \subset \Gamma(Z, \calO_Z)$ finite-dimensional $K$-subspaces, the natural inclusion ${\rm{Isom}}_V \subset {\rm{Isom}}_W$ is a closed immersion.
\end{proposition}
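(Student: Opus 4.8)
The plan is to trivialize the problem using the given $\phi$, thereby reducing to a ``universal'' boundedness functor that no longer mentions $\mathscr{L}, \mathscr{M}$ or $S$, and then to cut this functor out of an affine space by explicit bilinear equations. First I would reduce away the line bundles. For any $g\colon T \to S$, an isomorphism $\psi\colon ({\rm{Id}}_Z,g)^*(\mathscr{L}) \xrightarrow{\sim} ({\rm{Id}}_Z,g)^*(\mathscr{M})$ differs from $({\rm{Id}}_Z,g)^*(\phi)$ by a unique automorphism of the line bundle $({\rm{Id}}_Z,g)^*(\mathscr{M})$, i.e.\ by a unit $u := \psi\circ({\rm{Id}}_Z,g)^*(\phi)^{-1} \in \Gamma(Z\times_K T, \Gm)$, and conversely $u$ recovers $\psi$. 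By definition $(g,\psi)$ is $(\phi,V)$-bounded exactly when $u$ and $u^{-1}$ are both $V$-bounded, a condition involving only $u$. Thus $(g,\psi)\mapsto(g,u)$ identifies ${\rm{Isom}}_V$ with the product $S\times_K {\rm{Bd}}_V$, where ${\rm{Bd}}_V$ is the functor sending $T$ to the set of units $u$ on $Z\times_K T$ with $u$ and $u^{-1}$ both $V$-bounded. It therefore suffices to represent ${\rm{Bd}}_V$ by a finite type $K$-scheme and to prove the closed-immersion assertion for the ${\rm{Bd}}$'s; base changing the latter by $S$ yields the corresponding claim for the ${\rm{Isom}}$'s.

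Next I would represent ${\rm{Bd}}_V$ by explicit equations. Setting $w:=u^{-1}$, one has ${\rm{Bd}}_V(T) = \{(u,w): u,w\in V\otimes_K\Gamma(T,\calO_T),\ uw=1\}$, since $uw=1$ already forces $u$ to be a unit with inverse $w$. A choice of basis $v_1,\dots,v_d$ of $V$ identifies the $V$-bounded sections over $T$ with $\A^d_K(T)$, so a candidate pair is a $T$-point $(a_i,b_j)$ of $\A^{2d}_K$ with $u=\sum_i v_i\otimes a_i$ and $w=\sum_j v_j\otimes b_j$, whence $uw = \sum_{i,j}(v_iv_j)\otimes(a_ib_j)$. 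Let $W_0\subset\Gamma(Z,\calO_Z)$ be the finite-dimensional span of $\{v_iv_j\}\cup\{1\}$, fix a basis $e_1=1,e_2,\dots,e_m$, and write $v_iv_j=\sum_k c^k_{ij}e_k$ with $c^k_{ij}\in K$. Because $Z$ is qcqs over the field $K$, flat base change along $T\to\Spec(K)$ identifies ${\rm{pr}}_{T*}\calO_{Z\times_K T}$ with $\Gamma(Z,\calO_Z)\otimes_K\calO_T$; restricting to the finite-dimensional, split subspace $W_0$ shows that $W_0\otimes_K\Gamma(T,\calO_T)\hookrightarrow\Gamma(Z\times_K T,\calO)$ is injective. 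Under this injection the equation $uw=1$ becomes equivalent to the finite bilinear system
\[
\sum_{i,j}c^1_{ij}a_ib_j = 1, \qquad \sum_{i,j}c^k_{ij}a_ib_j = 0 \ \ (2\le k\le m).
\]
Hence ${\rm{Bd}}_V$ is the affine finite type $K$-scheme these equations define in $\A^{2d}_K$, and ${\rm{Isom}}_V = S\times_K{\rm{Bd}}_V$ is of finite type over $K$ since $S$ is.

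Finally, for $V\subset W$ I would extend a basis $v_1,\dots,v_d$ of $V$ to a basis $v_1,\dots,v_D$ of $W$; this realizes the coordinate space $\A^{2d}_K$ of ${\rm{Bd}}_V$ as the linear subspace of the coordinate space $\A^{2D}_K$ of ${\rm{Bd}}_W$ on which the coordinates indexed by $v_{d+1},\dots,v_D$ vanish. Since a section is $V$-bounded if and only if it is $W$-bounded with those extra coordinates zero, ${\rm{Bd}}_V$ is the intersection of ${\rm{Bd}}_W$ with this linear subspace, hence a closed subscheme of ${\rm{Bd}}_W$; base changing by $S$ shows that ${\rm{Isom}}_V\subset{\rm{Isom}}_W$ is a closed immersion. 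I expect the one genuinely delicate point to be the flat base change identification of $\Gamma(Z\times_K T,\calO)$ together with the resulting injectivity on finite-dimensional subspaces: this is exactly what forces the \emph{a priori} infinite boundedness condition to collapse into the finitely many honest polynomial equations above (rather than producing merely an ind-scheme), and it is where the qcqs hypothesis on $Z$ and the finite-dimensionality of $V$ are essential.
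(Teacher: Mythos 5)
Your proposal is correct and follows essentially the same route as the paper: strip off the line bundles via $\phi$ to reduce to pairs $(u,u^{-1})$ of $V$-bounded sections, identify these with points of $\underline{V}\times\underline{V}\cong\A^{2d}$, and cut out the relation $uw=1$ by the finitely many bilinear equations obtained from a basis of a finite-dimensional space containing $V\cdot V$ and $1$, with the closed-immersion claim reduced to the linear inclusion of coordinate spaces. The only additions are your explicit flat-base-change justification that $W_0\otimes_K\Gamma(T,\calO_T)$ injects into $\Gamma(Z\times_K T,\calO)$ (which the paper leaves implicit) and the omission of the trivial degenerate case $Z=\emptyset$ (where $1=0$ in $\Gamma(Z,\calO_Z)$), which the paper disposes of separately at the outset.
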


\begin{proof}
We first prove the representability. When $Z = \emptyset$, the assertion is clear, so assume $Z \neq \emptyset$. Once one has specified $g$, to give a $(\phi, V)$-bounded isomorphism $\psi$ is the same as giving a unit $u \in \Gamma(Z \times_K T, \Gm)$ such that $u, u^{-1} \in V \otimes_K \Gamma(T, \calO_T)$. The datum of an element $f \in V \otimes_K \Gamma(T, \calO_T)$ is the same as a $K$-morphism from $T$ to the vector group $\underline{V}$ defined by $V$. Thus to specify $\psi$ is the same as specifying a pair $(f, f')$ of $K$-morphisms $T \rightarrow \underline{V}$ such that $ff' = 1$. We claim that this condition defines a closed subscheme $Y \subset \underline{V} \times \underline{V}$, in which case we see that ${\rm{Isom}}_V = S \times Y$ is a scheme.

To see that $ff' = 1$ indeed defines a closed subscheme of $\underline{V} \times \underline{V}$, choose a basis $\mathscr{B} := \{v_1, \dots, v_n\}$ of $V$, with $1 \in \mathscr{B}$ if $1 \in V$. Let $V' \subset \Gamma(Z, \calO_Z)$ be a finite-dimensional subspace with $V\cdot V \subset V'$ and $1 \in V'$. Choose a basis $\mathscr{B}' := \{1, b_2, \dots, b_m\}$ of $V'$. For each $1 \leq i, j \leq n$, write 
\begin{equation}
\label{bdedrepblepfeqn1}
v_iv_j = \sum_{1 \leq k \leq m} \lambda_{ijk}b_k
\end{equation}
for unique $\lambda_{ijk} \in K$. Then $\underline{V} \simeq \A^n$ via the coefficients of the $v_i$, and if we let $d_i, e_i$ denote the coordinates on $\A^n$, then the condition $ff' = 1$ is the same as expanding the left side in terms of the basis $\mathscr{B}'$ using the equations (\ref{bdedrepblepfeqn1}), and then equating the coefficients of the $b_k$ on both sides. This yields a collection of equations defining a closed subscheme of $\underline{V}^2$. This completes the proof that ${\rm{Isom}}_V$ is representable.

To prove that ${\rm{Isom}}_V \hookrightarrow {\rm{Isom}}_W$ is a closed immersion, we note that it suffices to show that the map $\underline{V}^2 \hookrightarrow \underline{W}^2$ is a closed immersion, and this is well-known (and easy): Just complete the chosen basis $\mathscr{B}$ for $V$ to a basis $\mathscr{B} \sqcup \mathscr{C}$ for $W$, and then $\underline{V}$ is defined by the conditions that the coefficients of the elements of $\mathscr{C}$ vanish.
\end{proof}

Because we will analyze line bundles on a scheme $X$ by considering descent data on some fppf cover $f\colon Y \rightarrow X$, the following definition will play an important role in proving the representability of $\Pic^+_{X/K}$.

\begin{definition}
\label{desccandef}
Let $f\colon Y \rightarrow X$ be a morphism of $K$-schemes. We define the presheaf of abelian groups ${\rm{DesCan}}_f^{\rm{naive}}$ on the smooth-\'etale site of $K$ by the formula $$T \mapsto \ker\left((\pi_1)_T^* - (\pi_2)_T^*\colon \Pic(Y \times_K T) \rightarrow \Pic(Y \times_X Y \times_K T)\right),$$ where $\pi_i\colon Y \times_X Y \rightarrow Y$ are the projections. We define ${\rm{DesCan}}_f$ to be the (\'etale) sheafification of ${\rm{DesCan}}_f^{\rm{naive}}$. We also denote these functors by ${\rm{DesCan}}_{Y/X}^{\rm{naive}}$ and ${\rm{DesCan}}_{Y/X}$, respectively.
\end{definition}

The importance of boundedness arises from the following proposition.

\begin{proposition}
\label{bddcocycimprep}
Suppose given an fpqc morphism of connected normal finite type $K$-schemes $f\colon Y \rightarrow X$. Assume that ${\rm{DesCan}}_{Y/X}$ is almost-representable, and choose an isomorphism ${\rm{DesCan}}_{Y/X} \xrightarrow{\sim} G/E$ exhibiting it as such. Choose an \'etale cover $U \rightarrow G$ such that the element of ${\rm{DesCan}}_{Y/X}(G)$ corresponding to the quotient map $G \rightarrow G/E$ is represented on $U$ by a line bundle $$\mathscr{L} \in \ker(\pi_1^* - \pi_2^*\colon \Pic(Y \times_K U) \rightarrow \Pic(Y \times_X Y \times_K U)).$$ Let $F$ be the family of pairs $(u, \psi)$ where $u \in U(K_s)$ and $\psi\colon (\pi_1)_{K_s}^*(u^*\mathscr{L}) \xrightarrow{\sim} (\pi_2)_{K_s}^*(u^*\mathscr{L})$ is an isomorphism satisfying the cocycle condition
\[
\pi_{12}^*(\psi)\circ \pi_{23}^*(\psi) = \pi_{13}^*(\psi),
\]
where $\pi_{ij}\colon Y \times_X Y \times_X Y \rightarrow Y \times_X Y$ is projection onto the $(i,j)$ factors. If $F$ is locally bounded, then $\Pic^+_{X/K}$ is almost-representable.
\end{proposition}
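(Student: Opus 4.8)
The plan is to realize $\Pic^+_{X/K}$ as a quotient of a scheme built from the descent data, using the local boundedness of $F$ to control the relevant trivializations. First I would unwind the descent picture: by fpqc descent, a line bundle on $X_T$ (for smooth $T$) together with its canonical descent structure corresponds to a line bundle $\mathscr{N}$ on $Y_T$ lying in $\mathrm{DesCan}_{Y/X}^{\mathrm{naive}}(T)$, equipped with an isomorphism $\psi\colon (\pi_1)_T^*\mathscr{N} \xrightarrow{\sim} (\pi_2)_T^*\mathscr{N}$ satisfying the cocycle condition. Sheafifying, the map $\mathrm{DesCan}_{Y/X} \to \Pic^+_{X/K}$ records such a bundle-with-descent-datum and forgets the datum; its failure to be an isomorphism is measured precisely by the choice of $\psi$. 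The strategy is to present $\Pic^+_{X/K}$ as $\mathrm{DesCan}_{Y/X}$ enriched by the scheme of cocyclic trivializations, then divide out by the automorphisms of the line bundle (which are units, hence come from $\mathrm{Units}_{Y/K}$), and finally invoke the almost-representability permanence lemmas.

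Concretely, I would use the chosen presentation $\mathrm{DesCan}_{Y/X} \simeq G/E$ and the \'etale cover $U \to G$ on which the universal class is represented by the line bundle $\mathscr{L}$. Over $U$ (or rather over $U_{K_s}$, then descending), the extra data needed to lift a point of $G/E$ to an honest element of $\Pic^+_{X/K}$ is exactly an isomorphism $\psi\colon \pi_1^* \mathscr{L} \xrightarrow{\sim} \pi_2^*\mathscr{L}$ on $Y\times_X Y$ satisfying the cocycle condition — this is the family $F$. Local boundedness of $F$ means that, Zariski-locally on the base, all such $\psi$ are $(\phi,V)$-bounded for a single finite-dimensional $V \subset \Gamma(Y\times_X Y, \calO)$. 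Proposition \ref{bdedrepble} then represents the $V$-bounded isomorphisms by a finite type scheme $\mathrm{Isom}_V$, and the cocycle condition is a further closed condition (it equates two $\psi$-built sections over $Y\times_X Y \times_X Y$, again a $V'$-bounded constraint for suitable $V'$). Thus the cocyclic, $V$-bounded trivializations form a locally finite type $K$-scheme $C$ over $U$, and gluing over a Zariski cover of $G$ yields a single locally finite type scheme parametrizing lifts.

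The resulting scheme $C$ maps to $\Pic^+_{X/K}$ by sending a bundle-plus-datum to the line bundle it descends to on $X$. I would argue this map is surjective as a map of \'etale sheaves (every class in $\Pic^+_{X/K}(T)$ is, \'etale-locally on $T$, realized by such data) and identify its fibers: two pairs $(\mathscr{L},\psi)$ and $(\mathscr{L},\psi')$ descend to isomorphic bundles on $X$ exactly when they differ by an automorphism of $\mathscr{L}$ compatible with the descent data, i.e.\ by a section of $\mathrm{Units}_{Y/K}$ subject to a coboundary relation. Since $\mathrm{Units}_{Y/K}$ is smooth with finitely generated component group by Proposition \ref{repofunits}, I would organize this into a short exact sequence exhibiting $\Pic^+_{X/K}$ as a quotient of a smooth group scheme (built from $C$ and the torsor structure over $\mathrm{DesCan}_{Y/X}$) by the group of coboundaries, then apply Lemmas \ref{repettors}, \ref{keralrepmap}, \ref{extalrepaff} to conclude almost-representability.

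The main obstacle I anticipate is organizing the bookkeeping so that the relevant maps are genuinely morphisms of group sheaves and so that the quotient is of the almost-representable form, rather than merely verifying representability of individual pieces. Boundedness hands us a finite type $\mathrm{Isom}_V$ at each stage, but one must check that the group structure on $\Pic^+_{X/K}$ — addition of line bundles, with $\psi$'s multiplying — is covered by a group scheme structure respecting the $V$-bounds (tensoring bundles multiplies trivializations, so the bound degree can grow, and one must allow $V$ to vary while keeping everything locally finite type). The delicate point is thus to assemble the locally bounded families into a single group object and to verify that quotienting by $\mathrm{Units}_{Y/K}$ lands inside the class of almost-representable sheaves, for which the permanence lemmas of \S\ref{descGaloissection} are exactly the tools I would lean on.
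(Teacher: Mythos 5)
Your overall architecture matches the paper's: form the fiber product of $\Pic^+_{X/K}$ with $U$ over ${\rm{DesCan}}_{Y/X}$, use boundedness to land the descent data in the finite type scheme ${\rm{Isom}}_V$ of Proposition \ref{bdedrepble}, impose the cocycle condition as a closed condition, quotient by units, descend from $U$ to $G$, and assemble a group structure. But there is one step you pass over that is genuinely needed and is not a formality: the hypothesis only asserts boundedness of the family $F$ of descent data over \emph{$K_s$-points} $u \in U(K_s)$, whereas to build a finite type scheme over $U_i$ capturing \emph{all} descent data you must show that a descent datum over an arbitrary smooth $T \rightarrow U_i$ is $(\phi, V)$-bounded for the \emph{same} fixed $V$. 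Your phrasing ("local boundedness of $F$ means that, Zariski-locally on the base, all such $\psi$ are $(\phi,V)$-bounded") silently conflates the two. The paper bridges this by observing that any descent datum over smooth $T$ lies in some ${\rm{Isom}}_W$ with $W \supset V$, that its restriction to every $K_s$-point of $T$ factors through the closed subscheme ${\rm{Isom}}_V \subset {\rm{Isom}}_W$, and that $T(K_s)$ is dense in $T$ because $T$ is smooth; hence all of $T$ maps into ${\rm{Isom}}_V$. Without this, your scheme $C$ need not represent the functor of descent data on smooth test schemes.

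Two further points where your sketch stops short of a proof. First, the quotient by units: you must check that the action of ${\rm{Units}}_{Y/K}/{\rm{Units}}_{X/K}$ on the scheme of descent data is \emph{free}, which comes down to showing that a unit on $Y_T$ lying in ${\rm{Units}}_{Y/K}(T)$ and pulled back from $X_T$ actually lies in ${\rm{Units}}_{X/K}(T)$ (Lemma \ref{intunitsYX} in the paper); this is not automatic because ${\rm{Units}}_{X/K}$ is only a subfunctor of the full unit functor. One also needs ${\rm{Units}}_{Y/K}/{\rm{Units}}_{X/K}$ to be a scheme, which the paper gets from injectivity on component groups. Second, your concluding appeal to Lemmas \ref{keralrepmap} and \ref{extalrepaff} does not quite fit: the object you produce is a geometrically reduced, separated, locally finite type algebraic space over $K$ (the quotient by the units action and by the \'etale equivalence relation descending $U$ to $G$), and what turns it into a smooth group scheme is the construction of a multiplication law on the colimit of the pieces $\mathscr{X}'_{W,i}$ (allowing $W$ to grow under tensor product, exactly the issue you flag) together with the fact that a separated, geometrically reduced, locally finite type group algebraic space over a field is a smooth group scheme. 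You correctly identify this as the delicate point, but identifying it is not the same as carrying it out, and the permanence lemmas of \S\ref{descGaloissection} do not substitute for it.
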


\begin{proof}
Let $P$ be the presheaf on the smooth-\'etale site of ${\rm{Spec}}(K)$ defined by the following Cartesian diagram:
\[
\begin{tikzcd}
P \arrow{r}{h'} \arrow{d} \arrow[dr, phantom, "\square"] & U \arrow{d} \\
(\Pic^+_{X/K})^{\rm{naive}} \arrow{r}{f^*} & {\rm{DesCan}}_{Y/X}^{\rm{naive}}
\end{tikzcd}
\]
so that the (\'etale) sheafification $\mathscr{F}$ of $P$ lives in the following Cartesian diagram:
\[
\begin{tikzcd}
\mathscr{F} \arrow{r}{h} \arrow{d} \arrow[dr, phantom, "\square"] & U \arrow{d}{a} \\
\Pic^+_{X/K} \arrow{r}{b} & {\rm{DesCan}}_{Y/X}
\end{tikzcd}
\]
We will first show that there is a locally finite type, geometrically reduced $K$-algebraic space computing $\mathscr{F}$. In order to prove this, we choose an open cover $\{U_i\}_{i \in I}$ of $U$ by finite type $K$-schemes. Then, for each $i$, the family of $(u, \psi)$ as in the proposition such that $u \in U_i(K_s)$ is bounded.

Let $P_i$ denote the preimage of $U_i$ in $P$. For a smooth $K$-scheme $T$, to give a $T$-point of $P_i$ is the same as giving a morphism $g\colon T \rightarrow U_i$ together with a line bundle (up to isomorphism) $\mathscr{M}$ on $X \times_K T$ such that $({\rm{Id}}_Y, g)^*(\mathscr{L}) = f_T^*(\mathscr{M})$. By the theory of faithfully flat descent, this is (functorially) the same as the information of the morphism $g$ together with a descent datum (up to isomorphism) on $({\rm{Id}}_Y, g)^*(\mathscr{L})$ with respect to the map $f_T$. Fix an isomorphism $\phi\colon (\pi_1)_U^*(\mathscr{L}) \xrightarrow{\sim} (\pi_2)_U^*(\mathscr{L})$. Then a descent datum over $T$ is described by a unit $u \in \Gamma(Y \times_X Y \times_K T, \Gm)$ such that $u\cdot g^*(\phi)$ satisfies the cocycle condition. By assumption, the set of such $u$, and the set of $u^{-1}$ for such $u$, is bounded when we restrict to the case $T = {\rm{Spec}}(K_s)$ -- that is, it is contained in $V \otimes_K \Gamma(T, \calO_T)$ for some finite-dimensional $V \subset \Gamma(Y \times_X Y, \calO_{Y \times_X Y})$. For a general $T$, locally on $T$ one has that $u, u^{-1} \in W \subset \Gamma(Y \times_X Y, \calO_{Y \times_X Y})$ for some finite-dimensional $W \supset V$ (which $W$ is allowed to vary with $T$ and with the open subset of $T$ over which one is working locally). Thus the descent datum for $({\rm{Id}}_Y, g)^*(\mathscr{L})$ lies in ${\rm{Isom}}_W$ in the notation of Proposition \ref{bdedrepble}, while the restriction to each $K_s$-point of $T$ factors through the closed subscheme ${\rm{Isom}}_V$. Because $T$ is smooth, $T(K_s)$ is dense in $T$, so it follows that all of $T$ maps into ${\rm{Isom}}_V$. So we see that every descent datum for $\mathscr{L}$ arises from a morphism $T \rightarrow {\rm{Isom}}_V \times U$, and we claim that the condition for the descent datum to define a cocycle defines a closed subscheme of ${\rm{Isom}}_V \times U$. Indeed, the cocycle condition on $u\cdot g^*(\phi)$ is described by the condition
\begin{equation}
\label{bddcocycimpreppfeqn1}
\pi_{12}^*(u)\pi_{23}^*(u) = g^*(C)\pi_{13}^*(u),
\end{equation}
where $C := \pi_{13}^*(\phi)\pi_{23}^*(\phi^{-1})\pi_{12}^*(\phi^{-1})$. Choose a finite-dimensional subspace $H \subset \Gamma(Y \times_XY \times_X Y, \calO)$ containing $C\cdot \pi_{13}^*(V)$ and $\pi_{12}^*(V)\cdot \pi_{23}^*(V)$, and choose a basis $\mathscr{B}$ for $H$. Then (\ref{bddcocycimpreppfeqn1}) reduces to the equations equating the coefficients of each element of $\mathscr{B}$ on both sides of (\ref{bddcocycimpreppfeqn1}). This condition defines a closed subscheme of ${\rm{Isom}}_V \times U$, as claimed.

We have thus obtained for each finite-dimensional $W \subset \Gamma(Y\times_X Y, \calO)$ a finite type $U$-scheme $r\colon S'_W \rightarrow U$ which is a subfunctor of the functor of all descent data for $\mathscr{L}|Y \times_K U$, in such a manner that the functor of all descent data is the filtered direct limit of the $S'_W$, and such that, for each $i$, there is a finite-dimensional $V_i$ such that, whenever $V_i \subset W$, $S'_{W,i} := S'_{W}|r^{-1}(U_i)$ defines all descent data on $\mathscr{L}|Y \times_K U_i$ over any smooth $K$-scheme $T$. Replacing $S'_{W,i}$ by its maximal geometrically reduced closed subscheme $S_{W,i}$ (see \cite[Lem.C.4.1]{cgp}) does not alter its $T$-points for smooth $T$. Since $S'_{W_1,i} \subset S'_{W_2,i}$ is a closed subscheme with the same $K_s$-points when $V_i \subset W_1 \subset W_2$, we see that $S_{W,i}$ is independent of $W \subset V_i$. We denote this common geometrically reduced finite type $U_i$-scheme by $S_i$. The $S_i$ glue together to yield a finite type $U$-scheme $S$ that is geometrically reduced over $K$ and which computes the functor of descent data for $\mathscr{L}$ from $Y$ to $X$ over any smooth $K$-scheme $T$ with a map $T \rightarrow U_i$.

Now distinct descent data for the line bundle $\mathscr{L}$ may be isomorphic. This happens precisely when they differ by a unit on $Y \times_X Y \times_K T$ of the form $\pi_1^*(y)/\pi_2^*(y)$ for some unit $y$ on $Y_T$. We thus obtain an action of ${\rm{Units}}_{Y/K}$ on $\varinjlim_WS'_{W,i}$, where the limit is over finite-dimensional $W \supset V_i$. (See Proposition \ref{repofunits}; note that it is important that ${\rm{Units}}_{Y/K}$ be a subfunctor of the full functor of units on $Y$ in order to obtain this action, since we do not know if $S'_{W,i}$ is smooth.) Furthermore, by descent theory, the units $y$ on $Y$ such that $\pi_1^*(y)/\pi_2^*(y) = 1$ are exactly those coming from $X$ via $f^*$. Thus the action factors through an action of ${\rm{Units}}_{Y/K}/f^*({\rm{Units}}_{X/K})$. We claim that this action is then free. The action of the (functor of) units on $Y$ on $\varinjlim S'_{W,i}$ is free modulo the units on $X$, so the key point that must be checked is that the intersection of the functor of units on $X$ with ${\rm{Units}}_{Y/K}$ is exactly ${\rm{Units}}_{X/K}$. This is the content of Lemma \ref{intunitsYX} below. We thus obtain a free action of ${\rm{Units}}_{Y/K}/{\rm{Units}}_{X/K}$ on $\varinjlim S'_{W,i}$. Because ${\rm{Units}}_{Y/K}$ is smooth, it preserves the maximal geometrically reduced closed subscheme $S_i$, so we obtain a free action of ${\rm{Units}}_{Y/K}/{\rm{Units}}_{X/K}$ on $S$ whose presheaf quotient computes $P$, hence whose \'etale sheafified quotient computes $\mathscr{F}$.

We claim that $H := {\rm{Units}}_{Y/K}/{\rm{Units}}_{X/K}$ is a smooth $K$-group scheme. To verify this, we claim that it suffices to prove that the map on component groups induced by the inclusion $${\rm{Units}}_{X/K} \hookrightarrow {\rm{Units}}_{Y/K}$$ is injective. Indeed, letting $E_X, E_Y$ denote the components groups of ${\rm{Units}}_{X/K}$ and ${\rm{Units}}_{Y/K}$, respectively, this injectivity would yield by the snake lemma an exact sequence
\[
0 \longrightarrow {\rm{Units}}_{Y/K}^0/{\rm{Units}}_{X/K}^0 \longrightarrow {\rm{Units}}_{Y/K}/{\rm{Units}}_{X/K} \longrightarrow E_Y/E_X \longrightarrow 0,
\]
and representability by a scheme then follows as usual from \cite[Tag 0B8G]{stacks}.

To prove the claimed injectivity on component groups, we may pass to a finite Galois extension and thereby suppose that we have a unit $u$ on $X$ that does not live in ${\rm{Units}}_{X/K}^0$, and we must show that its pullback to $Y$ does not live in ${\rm{Units}}_{Y/K}^0$. By Proposition \ref{repofunits}, $u$ is in the identity component of $X$ precisely when it is algebraic over $K$, and similarly for units on $Y$. Thus what we must show is that, if $u$ is not algebraic, then neither is its pullback to $Y$. But this follows from the fact that the pullback map $f^*\colon \Gamma(X, \calO_X) \rightarrow \Gamma(Y, \calO_Y)$ is injective, because $f$ is faithfully flat.
 
It follows that $\mathscr{F}$ is computed by the geometrically reduced finite type $U$-algebraic space $\mathscr{S} := S/H$ with structure map $r'\colon \mathscr{S} \rightarrow U$. Furthermore, $\mathscr{F}_i := h^{-1}(U_i)$ is computed by the finite type $U_i$-algebraic space $\mathscr{S}'_{W,i} := S'_{W,i}/H$ for any finite-dimensional $V \subset W_i$. Let $R \subset U \times U$ be the \'etale equivalence relation such that $U/R = G$. Because $G$ is a locally finite type $K$-group scheme, it is separated, so $R \subset U \times U$ is a closed subscheme. Then $\iota\colon R' := (r' \times r')^{-1}(R) \hookrightarrow \mathscr{S} \times \mathscr{S}$ is an e\'tale equivalence relation and a closed embedding, and we let $\mathscr{X} := \mathscr{S}/R'$, a locally finite type, separated (because $\iota$ is a closed embedding), geometrically reduced $K$-algebraic space that computes the sheaf $\mathscr{G}$ on the smooth-\'etale site of ${\rm{Spec}}(K)$ which sits in the following Cartesian diagram
\[
\begin{tikzcd}
\mathscr{G} \arrow{r} \arrow{d} \arrow[dr, phantom, "\square"] & G \arrow{d} \\
\Pic^+_{X/K} \arrow{r}{b} & {\rm{DesCan}}_{Y/X}
\end{tikzcd}
\]
Note also that, because $\mathscr{S}$ is of finite type over $U$, $\mathscr{X}$ is of finite type over $G$. Similarly, we define for each finite-dimensional $W$ a quotient $\mathscr{X}'_{W,i}$ of $\mathscr{S}'_{W,i}$ that is an algebraic space whose maximal geometrically reduced closed sub-algebraic space is $\mathscr{X}|b^{-1}(a(U_i))$. 

Let $D_i := a(U_i) \subset {\rm{DesCan}}_{Y/X}$. The group structure on $U/R = G$, together with that on (isomorphism classes of) line bundles on $X$ descending $\mathscr{L}$, yields a multiplication map for any $i, j, k$ and $W_1, W_2$ $$\mathscr{X}'_{W_1,i} \times \mathscr{X}'_{W_2,j} \rightarrow \mathscr{X}'_{W_3, {\bf k}},$$ where ${\bf k} \subset I$ is a finite subset such that the union of the $D_k$ contains $D_i\cdot D_j$, and $W_3$ is sufficiently large depending on $W_1, W_2$ (and $\mathscr{X}_{W_3, {\bf k}}$ has the obvious meaning). In this manner one obtains a group structure on the colimit (over all $W, i$) of the $\mathscr{X}'_{W,i}$, hence on the colimit of the maximal geometrically reduced closed subspaces, which is $\mathscr{X}$. Thus $\mathscr{X}$ is a geometrically reduced, separated, locally finite type $K$-group algebraic space, so it is a smooth commutative $K$-group scheme \cite[Tag 0B8G]{stacks}. Because the map $m\colon \mathscr{X} \rightarrow G$ is of finite type, $\mathscr{X}$ has finitely generated component group. Thus, $\Pic^+_{X/K} \simeq \mathscr{X}/E$ is almost-representable, where the inclusion of $E$ in $\mathscr{X}$ is via the inclusion $E \hookrightarrow \mathscr{G}$, and the fact that the restriction of $\mathscr{X}$ to the smooth-\'etale site of ${\rm{Spec}}(K)$ (of which $E$ is an object) computes $\mathscr{G}$.
\end{proof}

\begin{lemma}
\label{intunitsYX}
The intersection of the functor of units on $X$ with ${\rm{Units}}_{Y/K}$ is ${\rm{Units}}_{X/K}$.
\end{lemma}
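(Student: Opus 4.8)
The nontrivial assertion is an equality of subfunctors of the functor $(f_Y)_*\Gm$ of units on $Y$, where the functor of units on $X$ is regarded as a subfunctor via the injection $f^*\colon (f_X)_*\Gm \hookrightarrow (f_Y)_*\Gm$ (injective because $f$ is faithfully flat). I would first dispose of the easy inclusion ${\rm{Units}}_{X/K} \subseteq (\text{units on }X)\cap {\rm{Units}}_{Y/K}$. For a smooth test scheme $T$ the image of ${\rm{Units}}_{X/K}(T)$ under $f^*$ lies in $\Gamma(Y_T,\Gm) = {\rm{Units}}_{Y/K}(T)$, and since both ${\rm{Units}}_{X/K}$ and ${\rm{Units}}_{Y/K}$ are smooth schemes, this map of functors on smooth test schemes extends uniquely (evaluate at the identity of the smooth scheme ${\rm{Units}}_{X/K}$) to a homomorphism $\iota\colon {\rm{Units}}_{X/K}\to {\rm{Units}}_{Y/K}$ compatible with the inclusions into the unit functors. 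Write $B = \Gamma(X,\calO_X)$, $C = \Gamma(Y,\calO_Y)$, and $A_X\subseteq B$, $A_Y\subseteq C$ for the subrings of elements algebraic over $K$, so that ${\rm{Units}}^0 = \R_{A/K}(\Gm)$ by Proposition \ref{repofunits}. The key elementary fact is that $f^*$ reflects algebraicity: as $f^*$ is an injective $K$-algebra map, an element $b\in B$ is algebraic over $K$ if and only if $f^*(b)$ is. Hence $A_X = B\cap A_Y$ inside $C$; this already shows $\iota$ carries ${\rm{Units}}^0_{X/K}$ into ${\rm{Units}}^0_{Y/K}$ and is injective on component groups, so $\iota$ is a closed immersion.

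For the reverse inclusion, let $T = \Spec R$ and suppose $v\in (B\otimes_K R)^* = \Gamma(X_T,\Gm)$ satisfies $f^*(v)\in {\rm{Units}}_{Y/K}(R)$; I must show $v\in {\rm{Units}}_{X/K}(R)$, and this may be checked \'etale-locally on $T$ since ${\rm{Units}}_{X/K}$ is a sheaf. By Galois descent I may extend $K$ so that, as in Proposition \ref{repofunits}, all boundary components are geometrically irreducible and there are fixed generating units $u_i$ on $X$ and $u'_j$ on $Y$; after \'etale localization on $T$, I may write $f^*(v) = w_Y\cdot\prod_j (u'_j)^{m_j}$ with $w_Y\in (A_Y\otimes_K R)^* = \R_{A_Y/K}(\Gm)(R)$ and $m_j\in\Z$. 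The argument then separates into a torus part and a lattice part. The torus part is routine: once I know the class $(m_j)$ lies in the image of the component-group map $A$ of $\iota$, say $\prod_j (u'_j)^{m_j}$ differs from $\prod_i (f^*u_i)^{n_i}$ by an element of $\R_{A_Y/K}(\Gm)(R)$, I set $v' := v\cdot\prod_i u_i^{-n_i}$ and obtain $f^*(v')\in (A_Y\otimes_K R)^*$. Applying the exact functor $-\otimes_K R$ to the identity $A_X = B\cap A_Y$ gives $(B\otimes_K R)\cap (A_Y\otimes_K R) = A_X\otimes_K R$ inside $C\otimes_K R$; since $v'$ and $v'^{-1}$ both lie in $B\otimes_K R$ with $f^*$-images in $A_Y\otimes_K R$, I conclude $v'\in (A_X\otimes_K R)^* = \R_{A_X/K}(\Gm)(R)$, whence $v = v'\prod_i u_i^{n_i}\in {\rm{Units}}_{X/K}(R)$.

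The lattice part — showing $(m_j)\in \im A$ — is the crux, and is precisely where the failure of $X_T$ to be normal, or even reduced, in positive characteristic creates difficulty, since one cannot simply take orders of $v$ along the boundary divisors of a compactification of $X$. The plan is to reduce to the case $R = L$ a field: the integers $m_j$ are locally constant, so I may base change along $R\to\kappa(\mathfrak p)$ for a suitable prime $\mathfrak p$ and then to an algebraic closure, which preserves the displayed factorization and the $m_j$. Units are insensitive to nilpotents, so I pass to $(X_L)_{\mathrm{red}}$ and $(Y_L)_{\mathrm{red}}$ and then to the normalizations of their components, on which $v$ and the $u'_j$ restrict to honest units on normal varieties admitting compactifications that map compatibly to $\overline{X}$ and $\overline{Y}$. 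Taking $\ord$ along boundary divisors and invoking multiplicativity of ramification indices in the square relating these normalizations to $\overline{Y}\to\overline{X}$, the orders of $f^*(v)$ read off from the factorization are forced to be pullbacks of orders of $v$ along the boundary of $\overline{X}$; the injectivity of the ramification matrix $A$ — which is a restatement of $A_X = B\cap A_Y$, namely that $f^*(s)$ extends over $\overline{Y}$ only if $s$ extends over $\overline{X}$ — then yields $(m_j)\in\im A$. I expect the careful matching of boundary divisors and ramification indices through the base change to $\overline{L}$ and through the normalizations to be the main technical obstacle.
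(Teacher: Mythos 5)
Your easy inclusion and your ``torus part'' are correct, and the latter is essentially what the paper does for the identity component: the identity $A_X = B \cap A_Y$ tensored with the (automatically flat) $K$-algebra $R$ closes that case. The genuine gap is in the ``lattice part,'' which you rightly identify as the crux but do not actually prove. Your reduction to an algebraically closed field $L$ followed by passage to $(X_L)_{\rm{red}}$, normalization of components, and valuations along boundary divisors only relates the tuple $(m_j)$ to the valuation vector of $v$ restricted to the normalization $\widetilde{X}$ of a component of $(X_L)_{\rm{red}}$, taken along boundary divisors of a compactification of $\widetilde{X}$. But the component group of ${\rm{Units}}_{X/K}$ is defined by valuation vectors of units on $X$ over $K$ and its finite separable extensions, and nothing in the bookkeeping explains why the valuation vector of a unit on the auxiliary $L$-scheme $\widetilde{X}$ — whose unit lattice may be strictly larger (boundary divisors can split, new classes can become principal, and boundary divisors of the compactification of $\widetilde{X}$ can map to higher-codimension points of $\overline{X}$) — corresponds to anything in $\im(\phi_X)$. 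Invoking injectivity of the ramification matrix cannot yield membership in its \emph{image}; what is needed is the production of an actual unit on $X$ over a finite extension of $K$ lying in the right component, and the valuation argument produces no such unit. At exactly the point where the argument must be nontrivial, it is circular: you would need to know that a unit on $X_L$ whose pullback lies in ${\rm{Units}}_{Y/K}(L)$ already lies in ${\rm{Units}}_{X/K}(L)$ for arbitrary (possibly inseparable) field extensions $L/K$, which is the lemma itself.

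The paper closes this gap with a short argument that treats the lattice and torus parts uniformly and needs no compactifications, valuations, or reduction to fields. Localize on $T = {\rm{Spec}}(R)$ so that $u := f^*(v)$ lies in a single component $C$ of ${\rm{Units}}_{Y/K}$ with a $K$-point given by a unit $v_0$ on $Y$; then $C(R) = v_0 \cdot (R \otimes_K A_Y)^{\times}$, so $u \in v_0(R \otimes_K A_Y)$. The condition of descending to $X_R$ is the $K$-linear condition $\pi_1^* = \pi_2^*$ on $v_0(R \otimes_K A_Y)$, so by flatness of $R$ over $K$ the descendable elements form $R \otimes_K M$ with $M := \{x \in v_0 A_Y : x \mbox{ descends to } X\}$ computed over $K$ itself. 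Since $u$ is a unit, $M \neq 0$; and since $A_Y$ is a field ($Y$ being integral), any nonzero $m \in M$ is a unit on $Y$ lying in $C$ that descends to a unit on $X$. Hence $C$ already meets $f^*({\rm{Units}}_{X/K}(K))$ — this is precisely your assertion $(m_j) \in \im A$ — and the remainder is your torus part. You should replace your geometric sketch of the lattice part with this flatness argument (or something equivalent); as written, that part of your proof does not go through.
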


\begin{proof}
We are free to extend scalars and thereby assume that the component group $E$ of ${\rm{Units}}_{Y/K}$ is constant and that every component arises from a unit on $Y$ (that is, the map ${\rm{Units}}_{Y/K}(K) \rightarrow E(K)$ is surjective), and similarly for $X$. Let $B \subset \Gamma(Y, \calO_Y)$ be the subring of elements algebraic over $K$, so ${\rm{Units}}^0_{Y/K} = \R_{B/K}(\Gm)$, and similarly let $A$ be the ring of elements of $\Gamma(X, \calO_X)$ algebraic over $K$, so ${\rm{Units}}^0_{X/K} = \R_{A/K}(\Gm)$. Let $T$ be a $K$-scheme, and suppose given a unit $u \in {\rm{Units}}_{Y/K}(T)$ such that $u$ is the pullback of a unit on $X_T$. We must show that $u$ comes from an element of ${\rm{Units}}_{X/K}(T)$.

Working locally, we may assume that $T = {\rm{Spec}}(R)$ and that $u$ lies in a single component $C$ of ${\rm{Units}}_{Y/K}$. Choose a unit $v$ on $Y$ defining a point in $C(K)$. Then $C(R) = v(R \otimes_K B)^{\times}$, so $u$ lies in the $R$-module $v(R \otimes_K B)$. By descent theory, an element of $v(R \otimes_K B)$ is the pullback of a section of $X_R$ precisely when its pullbacks along the two projections $(Y \times_X Y)_R \rightarrow Y_R$ agree. Because $R$ is $K$-flat, the elements of $v(R \otimes_K B)$ whose pullbacks agree are $R \otimes_K M$, where $M \subset vB$ are the elements of $vB$ whose pullbacks agree -- that is, that descend to sections of $\Gamma(X, \calO_X)$.

If $M = 0$, then $u = 0$, a contradiction, so $M \neq 0$. Let $0 \neq m \in M$. Because $Y$ is connected and reduced, $B$ is a field, so $m$ is a unit of $Y$, hence $m$ is a unit of $X$. Replacing $v$ by $m$, therefore, we may assume that $v$ descends to a unit of $X$, hence lives in ${\rm{Units}}_{X/K}$. So modifying by $v$, we have reduced to the case in which $C = {\rm{Units}}_{Y/K}^0 = \R_{B/K}(\Gm)$. Then $M = A$, so an element of $(R \otimes_K B)^{\times}$ which is pulled back from $X$ lies in $(R \otimes_K A)^{\times} = {\rm{Units}}_{X/K}^0(R)$.
\end{proof}

\section{Descent through radicial covers}
\label{descpurelyinpsepsection}

In this section we will prove the results required in order to descend the almost-representability of the restricted Picard functor through radicial covers (although we postpone the proof that representability descends through such covers until the next section). This may be regarded as the technical heart of the entire paper. We will proceed by verifying the hypotheses of Proposition \ref{bddcocycimprep}. We begin by identifying ${\rm{DesCan}}$ in this setting.

\begin{proposition}
\label{descanL/K}
Let $X$ be an irreducible affine $K$-scheme, and let $f\colon Y \rightarrow X$ be an affine radicial map. Then ${\rm{DesCan}}_{Y/X} \simeq \Pic^+_{Y/K}$.
\end{proposition}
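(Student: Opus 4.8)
The plan is to show that the defining condition of ${\rm DesCan}_{Y/X}$ is automatically satisfied by every line bundle, by exploiting that a radicial morphism makes $Y \times_X Y$ an infinitesimal thickening of $Y$ along the diagonal. For each test scheme $T$ one has ${\rm DesCan}_{Y/X}^{\rm naive}(T) = \ker((\pi_1)_T^* - (\pi_2)_T^*) \subseteq \Pic(Y \times_K T) = (\Pic^+_{Y/K})^{\rm naive}(T)$, so the inclusion of presheaves sheafifies to a monomorphism ${\rm DesCan}_{Y/X} \hookrightarrow \Pic^+_{Y/K}$, and it suffices to prove this is an isomorphism. I would verify surjectivity as a map of \'etale sheaves: any section of $\Pic^+_{Y/K}$ over a smooth $T$ is, after an \'etale cover that I may refine so that each member is affine, represented by an honest line bundle $\mathscr{L}$ on $Y_{T'}$. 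Thus it is enough to prove that for \emph{affine} $T$ every such $\mathscr{L}$ lies in ${\rm DesCan}_{Y/X}^{\rm naive}(T)$, i.e.\ that $(\pi_1)_T^*\mathscr{L} \cong (\pi_2)_T^*\mathscr{L}$ on $(Y \times_X Y) \times_K T$.

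The key geometric input is that the diagonal $\Delta\colon Y \to Y \times_X Y$ is a surjective closed immersion. It is a closed immersion because $f$ is affine, hence separated, and it is surjective precisely because $f$ is radicial (universal injectivity is equivalent to surjectivity of $\Delta$). Hence $\Delta$ is a thickening: its ideal sheaf is contained in the nilradical. Since $X$ and $Y$ are affine, so is $Y \times_X Y$, and base change by an affine $T = \Spec R$ preserves everything in sight (closed immersions and universal injectivity are stable under base change). Therefore $\Delta_T\colon Y_T \hookrightarrow (Y\times_X Y)_T$ is a closed immersion of affine schemes whose ideal $I$ is contained in the nilradical, hence in the Jacobson radical, of $S := \Gamma((Y\times_X Y)_T, \calO)$.

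Now fix a line bundle $\mathscr{L}$ on $Y_T$ and set $\mathscr{N} := (\pi_1)_T^*\mathscr{L} \otimes ((\pi_2)_T^*\mathscr{L})^{-1}$. Since $(\pi_i)_T \circ \Delta_T = {\rm id}_{Y_T}$, we get $\Delta_T^*\mathscr{N} \cong \calO_{Y_T}$. It remains only to note that the restriction map $\Delta_T^*\colon \Pic(S) \to \Pic(S/I)$ is injective: if an invertible $S$-module becomes free modulo $I$, then lifting a generator and applying Nakayama (valid since $I$ lies in the Jacobson radical) shows it is cyclic, and a cyclic invertible module is free. Only injectivity is needed, so I do not require $I$ to be nilpotent, and in particular neither finite-typeness of $f$ nor irreducibility of $X$ enters this step. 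We conclude $\mathscr{N} \cong \calO$, i.e.\ $(\pi_1)_T^*\mathscr{L} \cong (\pi_2)_T^*\mathscr{L}$, so ${\rm DesCan}_{Y/X}^{\rm naive}(T) = \Pic(Y_T)$ for every affine $T$; feeding this back into the surjectivity check of the first paragraph yields the desired sheaf isomorphism ${\rm DesCan}_{Y/X} \simeq \Pic^+_{Y/K}$. The one genuinely substantive point is the geometric recognition that a radicial cover presents $Y \times_X Y$ as an infinitesimal thickening of $Y$; once that is in hand, the triviality of $\mathscr{N}$ is a formal consequence of affineness.
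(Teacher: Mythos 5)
Your proof is correct, and it reaches the same destination as the paper's by a slightly different and arguably cleaner mechanism. Both arguments reduce to affine smooth test schemes $T$ and exploit the same geometric fact that radiciality makes $(Y\times_X Y)_T$ an infinitesimal thickening. But the paper passes to the reduction $((Y\times_X Y)_T)_{\rm{red}}$, invokes an external result (\cite[Lem.\,2.2.9]{rostateduality}) to identify $\Pic((Y\times_X Y)_T)$ with the Picard group of that reduction, and then runs a generic-point argument to show that the two projections literally coincide on the reduced subscheme --- this is where the separatedness, reducedness, and (implicitly) the irreducibility hypotheses get used. You instead restrict along the diagonal $\Delta_T\colon Y_T \hookrightarrow (Y\times_X Y)_T$, where the two projections agree tautologically, so you never need to compare morphisms at generic points; and you need only \emph{injectivity} of $\Pic(S)\rightarrow \Pic(S/I)$ rather than bijectivity of the restriction to the reduction, which you prove in three lines with Nakayama since $I$ lies in the nilradical and hence in the Jacobson radical. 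The payoff of your route is self-containedness (no citation to \cite{rostateduality}, no nilpotence of the ideal, no Noetherian or finite-type hypotheses) and the observation that irreducibility of $X$ is not needed; the payoff of the paper's route is that the lemma it cites is reused elsewhere and handles more general cohomological degrees, so the author gets this case essentially for free. One small presentational point: when you invoke ``a cyclic invertible module is free,'' it is worth recording the one-line reason (an invertible module is faithful and finitely presented, so a surjection $S \twoheadrightarrow M$ has kernel equal to the annihilator, which vanishes locally and hence globally), but this does not affect correctness.
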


\begin{proof}
It suffices to show that one has an isomorphism
\[
{\rm{DesCan}}_{Y/X}^{\rm{naive}} \simeq (\Pic^+_{Y/K})^{\rm{naive}}
\]
when one restricts both sides to the category of {\em affine} smooth $K$-schemes. This amounts to showing that, for any smooth affine $T/K$, and every $\mathscr{L} \in \Pic(Y_T)$, one has $\pi_1^*(\mathscr{L}) \simeq \pi_2^*(\mathscr{L})$, where $\pi_i$ are the base changes to $T$ of the projection maps $Y \times_X Y \rightarrow Y$. By \cite[Lem.\,2.2.9]{rostateduality} applied with $G = \Gm$ and $i = 1$, the embedding $((Y \times_X Y)_T)_{\rm{red}} \hookrightarrow (Y \times_X Y)_T$ induces an isomorphism on Picard groups. It therefore suffices to prove that the two maps
\begin{equation}
\label{descanL/Kpfeqn1}
\begin{tikzcd}
((Y \times_X Y)_T)_{\rm{red}} \arrow{r} & (Y \times_X Y)_T \ar[r,shift left=.75ex,"\pi_1"] \ar[r,shift right=.75ex,swap,"\pi_2"] & Y_T
\end{tikzcd}
\end{equation}
agree. Call these two maps $g_1, g_2$. Let $j\colon \eta \rightarrow ((Y \times_K Y)_T)_{\rm{red}}$ be the inclusion of one of the generic points of $((Y \times_K Y)_T)_{\rm{red}}$. It suffices to show that the maps $g_i\circ j$ agree. But the $g_i\circ j$ agree after postcomposing with $f_T\colon Y_T \rightarrow X_T$. Since $f_T$ is radicial, it follows that the $g_i\circ j$ agree.
\end{proof}

Next we prove the local boundedness of the collection of descent data associated to a radicial extension. The following lemma -- akin to an integral domain property for tensor powers of rings -- is simple, but lies at the heart of the proof of Theorem \ref{almostreprestpic}, and in particular, of the boundedness of the collection of descent data for line bundles on a scheme required in Proposition \ref{bddcocycimprep} in order to descend representability.

\begin{proposition}
\label{intdomprop}
Let $A$ be a ring, let $B$ be a finite free $A$-algebra, and let $\pi_{12},\pi_{23}\colon B \otimes_A B \rightarrow B \otimes_A B \otimes_A B$ denote the maps $b \otimes b' \mapsto b\otimes b' \otimes 1$, $1 \otimes b\otimes b'$, respectively. Let $I, I', I''$ be ideals of $A$ such that, for $b, b' \in B$, if $bb' \in I''B$, then either $b \in IB$ or $b' \in I'B$. Then for $\alpha, \beta \in B \otimes_A B$, if $\pi_{12}(\alpha)\pi_{23}(\beta) \in I''(B \otimes_A B \otimes_A B)$, then either $\alpha \in I(B \otimes_A B)$ or $\beta \in I'(B \otimes_A B)$.
\end{proposition}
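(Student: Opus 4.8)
The plan is to reduce everything to the given property of $B$ by writing $\alpha$ and $\beta$ in coordinates with respect to an $A$-basis of $B$, chosen so that the product $\pi_{12}(\alpha)\pi_{23}(\beta)$ collects all of the relevant ring multiplication into the \emph{middle} tensor factor. First I would fix an $A$-basis $e_1,\dots,e_n$ of the finite free $A$-algebra $B$. The crucial point is to use \emph{two different} decompositions of $B\otimes_A B$: I would write $\alpha = \sum_i e_i\otimes \alpha_i$ with $\alpha_i\in B$ (expanding the \emph{left} factor in the basis), and $\beta = \sum_j \beta_j\otimes e_j$ with $\beta_j\in B$ (expanding the \emph{right} factor in the basis). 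This asymmetry is exactly what makes the argument work.

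Applying $\pi_{12}$ and $\pi_{23}$ and multiplying then gives
\[
\pi_{12}(\alpha)\,\pi_{23}(\beta)=\sum_{i,j} e_i\otimes \alpha_i\beta_j\otimes e_j ,
\]
so that the basis elements $e_i, e_j$ sit in the outer two slots and the products $\alpha_i\beta_j\in B$ accumulate in the middle slot. Next I would translate ideal-membership into a coordinatewise condition, using that $B$, $B\otimes_A B$ and $B\otimes_A B\otimes_A B$ are all finite free over $A$: for a free $A$-module $M$ with basis $\{m_\lambda\}$ one has $\sum_\lambda c_\lambda m_\lambda\in I''M$ if and only if every $c_\lambda\in I''$. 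Expanding each $\alpha_i\beta_j$ in the basis $\{e_k\}$ and grouping, the hypothesis $\pi_{12}(\alpha)\pi_{23}(\beta)\in I''(B\otimes_A B\otimes_A B)$ becomes precisely the statement that $\alpha_i\beta_j\in I''B$ for every pair $(i,j)$. The same coordinate bookkeeping shows that $\alpha\in I(B\otimes_A B)$ if and only if $\alpha_i\in IB$ for all $i$, and $\beta\in I'(B\otimes_A B)$ if and only if $\beta_j\in I'B$ for all $j$.

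With these translations in hand the proof is purely combinatorial. Applying the hypothesis on $B$ to $b=\alpha_i$ and $b'=\beta_j$, from $\alpha_i\beta_j\in I''B$ I conclude that for every $(i,j)$ either $\alpha_i\in IB$ or $\beta_j\in I'B$. Now suppose $\alpha\notin I(B\otimes_A B)$, so that $\alpha_{i_0}\notin IB$ for some index $i_0$. Then for that fixed $i_0$ the alternative forces $\beta_j\in I'B$ for \emph{every} $j$, whence $\beta\in I'(B\otimes_A B)$, as desired. I expect the only real subtlety to lie in the very first step: one must resist expanding both $\alpha$ and $\beta$ in the same factor, and instead use the left basis for $\alpha$ and the right basis for $\beta$ so that the ring multiplication is confined to the middle tensor factor. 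The finite \emph{free} (rather than merely flat) hypothesis on $B$ is what licenses the clean ``membership $\Leftrightarrow$ coordinatewise'' dictionary; everything after the reduction is the elementary observation that a table of alternatives ``$P(i)$ or $Q(j)$'' collapses to ``all $P(i)$, or all $Q(j)$.''
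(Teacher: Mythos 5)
Your proof is correct and uses the same essential mechanism as the paper's: exploit freeness of $B$ over $A$ to turn ideal membership into a coordinatewise condition and thereby reduce to the hypothesis on $B$. The only cosmetic difference is that you expand $\alpha$ and $\beta$ simultaneously (confining the multiplication to the middle tensor factor in one step), whereas the paper peels off the outer factors one at a time, first reducing to a statement about $\alpha(1\otimes\gamma)$ in $B\otimes_A B$ and then expanding $\alpha$.
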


\begin{proof}
Write $B = \oplus_{i=1}^n Ab_i$ for some $b_1, \dots, b_n \in B$, so $$B \otimes_A B \otimes_A B = \bigoplus_{i=1}^n (B \otimes_A B \otimes b_i),$$ where $\otimes b_i$ means $\otimes_A Ab_i$. Write $\beta = \sum_{i=1}^n (\gamma_i \otimes b_i)$ with $\gamma_i \in B$. We then have $\pi_{23}(\beta) = \sum_{i=1}^n (1 \otimes \gamma_i \otimes b_i)$, so $\pi_{12}(\alpha)\pi_{23}(\beta) = \sum_{i=1}^n [\alpha(1 \otimes \gamma_i) \otimes b_i]$. Because $B \otimes_A B \otimes_A B$ is a free left $B \otimes_A B$-module over the $1 \otimes 1 \otimes b_i$, to prove the lemma we must show that, if $\alpha \in B \otimes B$ is not in the ideal generated by $I$ and $\gamma \in B$ is not in $I'B$, then $\alpha(1 \otimes \gamma) \notin I''(B \otimes_A B)$. Because multiplication by $1 \otimes B$ makes $B \otimes_A B$ into a free $B$-module, this is the same as asserting that, for $b, b' \in B$, $bb' \in I''B \Longrightarrow b \in IB$ or $b' \in I'B$, and this is true by assumption.
\end{proof}

The following somewhat technical lemma will be used in conjunction with Proposition \ref{intdomprop} to prove the boundedness of the descent data arising from descent through a radicial cover.

\begin{lemma}
\label{prodnondiv}
Let $A$ be a Japanese DVR with maximal ideal $\mathfrak{m}$, and let $B$ be a finite free local $A$-algebra which is an integral domain. Assume that $B^m := \{b^m\mid b \in B\} \subset A$ for some $m > 0$. Then there exists $r > 0$ such that, for $b, b' \in B$, if $b, b' \notin \mathfrak{m}B$, then $bb' \notin \mathfrak{m}^rB$.
\end{lemma}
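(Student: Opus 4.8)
The plan is to translate the $\mathfrak{m}$-adic divisibility condition into a statement about a genuine valuation, where multiplicativity is transparent, and to control the failure of $B$ to be normal by means of a conductor. Write $K := \operatorname{Frac}(A)$ and $L := \operatorname{Frac}(B)$; since $B$ is finite free over $A$ and an integral domain, $L/K$ is a finite field extension. First I would observe that the hypothesis $B^m \subseteq A$ forces $L/K$ to be purely inseparable: any $\ell \in L$ may be written $\ell = b/a$ with $b \in B$ and $a \in A$, whence $\ell^m = b^m/a^m \in K$. In particular, the normalized valuation $v$ of the DVR $A$ extends \emph{uniquely} to a valuation on $L$, since for purely inseparable $L/K$ the values $w(\ell)$ are determined by $w(\ell^{p^k}) = v(\ell^{p^k})$ for $\ell^{p^k} \in K$.

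Next I would pass to the integral closure $\widetilde{B}$ of $A$ (equivalently of $B$) in $L$. Because $A$ is Japanese, $\widetilde{B}$ is module-finite over $A$, hence module-finite over $B$; because $v$ has a unique extension to $L$, the ring $\widetilde{B}$ is local, and being the integral closure of a DVR in a finite extension it is Noetherian, normal, and one-dimensional, hence a DVR. Let $\mathfrak{M} = (\tau)$ be its maximal ideal and $w$ the associated normalized valuation, so $w(\tau) = 1$; if $e \geq 1$ denotes the ramification index, then $w(\pi) = e$ for a uniformizer $\pi$ of $A$. Since $\widetilde{B}$ is module-finite over $B$, clearing denominators of a finite generating set shows that the conductor $\mathfrak{c} := \{x \in \widetilde{B} : x\widetilde{B} \subseteq B\}$ is a nonzero ideal of $\widetilde{B}$, and it is contained in $B$ (as $1 \in \widetilde{B}$); since $\widetilde{B}$ is a DVR we may write $\mathfrak{c} = \tau^c\widetilde{B} = \{x \in \widetilde{B} : w(x) \geq c\}$ for some integer $c \geq 0$.

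The crux is the bound: if $b \in B$ and $b \notin \mathfrak{m}B$, then $w(b) < c + e$. Indeed, were $w(b) \geq c + e$, then $b\pi^{-1} \in L$ would satisfy $w(b\pi^{-1}) = w(b) - e \geq c \geq 0$, so $b\pi^{-1} \in \widetilde{B}$ and in fact $b\pi^{-1} \in \mathfrak{c} \subseteq B$, giving $b \in \pi B = \mathfrak{m}B$, a contradiction. With this in hand the lemma follows from the additivity of $w$: if $b, b' \notin \mathfrak{m}B$ and $bb' \in \mathfrak{m}^r B = \pi^r B$, then writing $bb' = \pi^r\gamma$ with $\gamma \in B$ gives $w(bb') \geq re$ on the one hand, while $w(bb') = w(b) + w(b') < 2(c+e)$ on the other. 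Hence $re < 2(c+e)$, so any integer $r \geq 2(c+e)$ has the required property.

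The main obstacle — and the only genuinely subtle point — is that $B$ need not be normal, so one cannot directly identify the $\mathfrak{m}$-adic order on $B$ with the valuation $w$; this is precisely the discrepancy that the conductor $\mathfrak{c}$ repairs, and its nonvanishing is exactly where the Japanese hypothesis (finiteness of the integral closure) enters. The purely inseparable nature of $L/K$ is used only to ensure that there is a single valuation $w$ available, i.e. that $\widetilde{B}$ is a DVR rather than merely a semilocal Dedekind domain; everything else is formal manipulation of $w$.
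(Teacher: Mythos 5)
Your proof is correct and follows essentially the same route as the paper's: pass to the normalization $\widetilde{B}$, show it is a DVR using the $m$-th power hypothesis, control the gap between $B$ and $\widetilde{B}$ by a conductor, and conclude by multiplicativity of the valuation. (The only cosmetic point: you do not really need pure inseparability of $L/K$ -- the uniqueness of the valuation extension, hence locality of $\widetilde{B}$, follows directly from $w(\ell) = v(\ell^m)/m$, which is how the paper argues.)
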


\begin{proof}
Let $\widetilde{B}$ be the normalization of $B$, which is also the normalization of $A$ inside ${\rm{Frac}}(B)$. Because $A$ is Japanese, $\widetilde{B}$ is a finite $A$-module, hence also finite over $B$. It follows that there exists $0 \neq a \in B$ such that $a\widetilde{B} \subset B$. Replacing $a$ by $a^m$, we may assume that $a \in A$. We claim that $\widetilde{B}$ is a DVR. Indeed, it is a Dedekind domain. Suppose that it admits two distinct maximal ideals $\mathfrak{n}_1, \mathfrak{n}_2$, and let $x \in \mathfrak{n}_1 - \mathfrak{n}_2$. Replacing $x$ by $x^m$, we may assume that $x \in A$. But each maximal ideal of $\widetilde{B}$ lies over $\mathfrak{m}$, so $x$ cannot lie in one maximal ideal but not another. Thus $\widetilde{B}$ admits a unique maximal ideal, hence is a DVR.

Let $v := {\rm{ord}}_\mathfrak{m}(a)$. We claim that, for $b \in B$, if $b \notin \mathfrak{m}B$, then $b \notin \mathfrak{m}^{v+1}\widetilde{B}$. To see this, let $\pi \in A$ be a uniformizer. If $b \in \mathfrak{m}^{v+1}\widetilde{B}$, then $b/\pi^{v+1} \in \widetilde{B}$, hence $ab/\pi^{v+1} \in B$, or $ab \in \pi^{v+1}B$. Because $B$ is a free $A$-module, this implies that $b \in \pi B$, proving the claim.

Now we claim that we may take $r = 2(v+1)$ in the lemma. Indeed, if $b, b' \in B$ satisfy $bb' \in \mathfrak{m}^{2(v+1)}B \subset \mathfrak{m}^{2(v+1)}\widetilde{B}$, then -- using the fact that $A \rightarrow \widetilde{B}$ is a local homomorphism of DVR's -- we conclude that either $b$ or $b'$ lies in $\mathfrak{m}^{v+1}\widetilde{B}$; say $b$ does without loss of generality. Then by the claim of the previous paragraph, we deduce that $b \in \mathfrak{m}B$.
\end{proof}

We may now verify the local boundedness of the descent data associated to a radicial cover. For the notion of local boundedness of a family of isomorphisms, see Definition \ref{boundedness}.

\begin{proposition}
\label{locbddesdata}
Let $f\colon Y \rightarrow X$ be a finite flat radicial morphism between connected normal $K$-schemes of finite type. Let $T$ be a smooth $K$-scheme, and $\mathscr{L}$ a line bundle on $Y_T$ such that $\pi_1^*(\mathscr{L}) \simeq \pi_2^*(\mathscr{L}) \in \Pic((Y \times_X Y)_T)$. Then the collection of all descent data for $\mathscr{L}_S$ with respect to the fppf cover $Y_S \rightarrow X_S$ is locally bounded as $S$ varies over all $T$-schemes smooth over $K$.
\end{proposition}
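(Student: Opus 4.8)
The plan is to reduce, via density of field points and a fixed compactification, to bounding the pole orders of the cocycle units along finitely many boundary divisors, and then to extract that bound from the cocycle relation by a leading-term analysis resting squarely on Proposition \ref{intdomprop} and Lemma \ref{prodnondiv}.

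First I would set up the reductions. To prove local boundedness in the sense of Definition \ref{boundedness}, it suffices, after covering $T$ by quasi-compact opens (this is exactly the freedom afforded by local boundedness), to produce a single finite-dimensional $V$. Since $(\phi,V)$-boundedness is a closed condition by Proposition \ref{bdedrepble} and $K_s$-points are dense on the smooth scheme $S$, any descent datum $u$ over $S$ that lies in $W\otimes_K\Gamma(S,\calO_S)$ for some finite-dimensional $W$ and whose restriction to every $K_s$-point already lies in the closed subscheme ${\rm{Isom}}_V\subset{\rm{Isom}}_W$ must itself lie in ${\rm{Isom}}_V$; hence it is enough to bound the restriction of every descent datum to every $K_s$-point. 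Such a restriction is a cocycle unit on $(Y\times_X Y)_{K_s}$ sitting over some $t\in T(K_s)$, with discrepancy $C_t$ the restriction to $t$ of the fixed global cocycle on $(Y\times_X Y\times_X Y)_T$ associated to $\phi$. Base changing $X$ and $Y$ to $K_s$ (these remain normal and connected on each component, and $Y\to X$ remains finite flat radicial), I may take the ground field to be separably closed; crucially, this absorbs the troublesome base ring into the ground field, so that $u$ becomes a genuine element of $\Gamma(Y\times_X Y,\calO)$ and no base change of the domain-type property of Proposition \ref{intdomprop} through a non-reduced or imperfect ring is required.

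Next I would fix the device measuring boundedness. Choose a normal compactification $\overline{X}$ of $X$ with boundary prime divisors $D_1,\dots,D_n$, and a normal compactification $\overline{Z}$ of $Z:=Y\times_X Y$ lying over $\overline{X}$. Since $Z\to X$ is radicial, there is a unique boundary divisor $E_i$ of $\overline{Z}$ over each $D_i$, and $\Gamma(Z,\calO)=\bigcup_N H^0(\overline{Z},\calO_{\overline{Z}}(N\sum_i E_i))$ is an increasing union of finite-dimensional spaces; $u$ is $V$-bounded for $V=H^0(\overline{Z},\calO(N\sum_i E_i))$ precisely when the pole orders $-\ord_{E_i}(u)$ are all $\le N$. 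Thus it suffices to bound these pole orders uniformly over $t\in T(K_s)$ and over all descent data. Because $C_t$ is the restriction of a single global section on the quasi-compact $(Y\times_X Y\times_X Y)_T$, its pole order along the divisor over $E_i$ is bounded by a constant $c_i$ independent of $t$ — which is exactly why passing to quasi-compact opens of $T$ was legitimate.

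The heart of the matter is then the leading-term argument at a single $D:=D_i$. Set $\mathcal{A}:=\calO_{\overline{X},D}$, a Japanese DVR (it is excellent) with maximal ideal $\mathfrak{m}=(\varpi)$ and $\operatorname{Frac}(\mathcal A)=\operatorname{Frac}(A)=:F$, and $\mathcal{B}:=\calO_{\overline{Y},D'}$ at the unique point $D'$ over $D$; then $\mathcal{B}$ is a finite free local $\mathcal{A}$-algebra that is a domain, with $\mathcal{B}^{p^e}\subset\mathcal{A}$ for $p^e=[\operatorname{Frac}(B):F]$, since $\operatorname{Frac}(B)/F$ is purely inseparable. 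Lemma \ref{prodnondiv} produces $r>0$ with $bb'\in\mathfrak{m}^r\mathcal{B}\Rightarrow b\in\mathfrak{m}\mathcal{B}$ or $b'\in\mathfrak{m}\mathcal{B}$, which is exactly the hypothesis of Proposition \ref{intdomprop} with $I=I'=\mathfrak{m}$ and $I''=\mathfrak{m}^r$. Given a cocycle unit $u$, let $\delta\ge 0$ be minimal with $\alpha:=\varpi^{\delta}u\in\mathcal{B}\otimes_{\mathcal{A}}\mathcal{B}$; by minimality and freeness over $\mathcal A$ one has $\alpha\notin\mathfrak{m}(\mathcal{B}\otimes_{\mathcal{A}}\mathcal{B})$. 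Writing $C_t=\varpi^{-c}C_0$ with $C_0\in\mathcal{B}\otimes_{\mathcal{A}}\mathcal{B}\otimes_{\mathcal{A}}\mathcal{B}$ and $c\le c_i$, the cocycle relation (\ref{bddcocycimpreppfeqn1}), after multiplying by $\varpi^{2\delta}$, reads
\[
\pi_{12}(\alpha)\,\pi_{23}(\alpha)=\varpi^{\,\delta-c}\,C_0\,\pi_{13}(\alpha).
\]
If $\delta\ge c+r$, the right-hand side lies in $\mathfrak{m}^r(\mathcal{B}\otimes_{\mathcal{A}}\mathcal{B}\otimes_{\mathcal{A}}\mathcal{B})$, whence Proposition \ref{intdomprop} forces $\alpha\in\mathfrak{m}(\mathcal{B}\otimes_{\mathcal{A}}\mathcal{B})$, a contradiction. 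Therefore $\delta<c+r$, bounding the pole order of $u$ along $D$ by a constant depending only on $Y/X$, $D$, and $c_i$. Applying the same reasoning to $u^{-1}$ (again a cocycle unit, now with discrepancy $C_t^{-1}$) and ranging over the finitely many $D_i$ yields the uniform $V$, completing the reduction.

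I expect the main obstacle to be precisely this leading-term step: matching $\varpi$-adic orders correctly inside the non-reduced ring $\mathcal{B}\otimes_{\mathcal{A}}\mathcal{B}$ and ensuring that the genuine non-membership $\alpha\notin\mathfrak{m}(\mathcal{B}\otimes_{\mathcal{A}}\mathcal{B})$ is what drives the contradiction through Proposition \ref{intdomprop}. The supporting subtleties — absorbing the base into $K_s$ so that Proposition \ref{intdomprop} applies without an auxiliary tensor factor, and using quasi-compactness of $T$ to bound the pole order of $C_t$ uniformly — are delicate chiefly because the function fields in play are imperfect; but once the ground field is separably closed and $T$ is quasi-compact, the excellence (hence Japanese property) of $\mathcal{A}$ makes Lemma \ref{prodnondiv} available and the argument closes.
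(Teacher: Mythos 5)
Your strategy coincides with the paper's: reduce to $K_s$-points over a quasi-compact piece of $T$, measure boundedness by pole orders along the codimension-one boundary of a compactification, and extract the bound from the cocycle relation by a leading-term argument in which Lemma \ref{prodnondiv} supplies the hypothesis of Proposition \ref{intdomprop} with $I=I'=(\varpi)$ and $I''=(\varpi^r)$. That central step — including the observation that $\varpi^{\delta}u\notin\varpi(\mathcal{B}\otimes_{\mathcal{A}}\mathcal{B})$ by minimality of $\delta$ and $\varpi$-torsion-freeness of the free $\mathcal{A}$-module $\mathcal{B}\otimes_{\mathcal{A}}\mathcal{B}$ — is exactly the paper's, down to the constants.

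There is, however, a genuine gap in your ``device measuring boundedness.'' You posit a \emph{normal} compactification $\overline{Z}$ of $Z:=Y\times_XY$ and assert $\Gamma(Z,\calO)=\bigcup_N {\rm{H}}^0(\overline{Z},\calO(N\sum_i E_i))$, so that bounding pole orders along the finitely many boundary divisors places $u$ in a finite-dimensional space. But $Y\times_XY$ is in general non-reduced (e.g. ${\rm{Spec}}(K'\otimes_KK')$ for $K'/K$ purely inseparable) — this non-reducedness is the whole source of the difficulty — so it admits no normal compactification, and the natural candidate $\overline{Y}\times_{\overline{X}}\overline{Y}$ is neither normal nor reduced. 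Consequently neither of the two facts your reduction needs is automatic: (i) that the subspaces of $\Gamma(Y\times_XY,\calO)$ cut out by pole bounds are finite-dimensional, and (ii) that a global section whose pole order is bounded at the codimension-one boundary points actually lies in such a subspace — on a non-normal scheme, sections are not controlled by codimension-one behavior. The paper has to supply both: it builds coherent sheaves $\mathscr{G}_N$ on $\overline{Y}\times_{\overline{X}}\overline{Y}$ using the fact that $\overline{f}$ is flat over an open $U\subset\overline{X}$ whose complement has codimension $\geq 2$ (coherence via \cite[IV$_2$, Cor.\,5.11.4]{ega}), and then proves Lemma \ref{cod1pts} — that membership in $\mathscr{G}_N$ is detected at the codimension-one boundary points — by flat base change of the relevant normality statement from $\overline{X}$ up to $V=g^{-1}(U)$. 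Your local analysis in $\mathcal{B}\otimes_{\mathcal{A}}\mathcal{B}$ is correct once this scaffolding is in place, but without it the passage from ``bounded pole order at each boundary divisor'' to ``lies in a fixed finite-dimensional $V$'' — through which your entire argument routes — is unjustified.
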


\begin{proof}
We may assume that $Y \neq \emptyset$. The function field extension $K(Y)/K(X)$ is purely inseparable, so when ${\rm{char}}(K) = 0$ it is an isomorphism. Since $f$ is finite and $X$ and $Y$ are normal, it follows that $f$ is an isomorphism, so the proposition is immediate in this case. So from now on we assume that ${\rm{char}}(K) = p > 0$. We may extend scalars and assume that $K$ is separably closed. Further, we may work locally on $T$ and thereby assume that $T$ is quasi-compact, and in this case we must prove boundedness of the collection of descent data. Because the $K$-points of $S$ are Zariski dense in $S$ for any $K$-smooth $S$, we may assume that $S = {\rm{Spec}}(K)$. To prove boundedness of the descent data, recall that a descent datum is an isomorphism between the pullbacks of $\mathscr{L}$ along the two projections $Y \times_X Y \rightrightarrows Y$ which additionally satisfies the cocycle condition. Fix an isomorphism $\phi\colon \pi_1^*(\mathscr{L}) \xrightarrow{\sim} \pi_2^*(\mathscr{L})$. Let $s\colon {\rm{Spec}}(K) \rightarrow T$ be the $T$-scheme structure on $S = {\rm{Spec}}(K)$. Let $\pi_{ij}\colon Y \times_X Y \times_X Y \rightarrow Y \times_X Y$ denote the various projections. Then the cocycle condition on $u\cdot s^*(\phi)$ with $u \in \Gamma((Y \times_X Y)_T, \Gm)$ is equivalent to the equation
\[
\pi_{12}^*(u)\pi_{23}^*(u) = C\pi_{13}^*(u),
\]
where $C := s^*(C')$ with $C' := \pi_{13}^*(\phi)\pi_{23}^*(\phi)^{-1}\pi_{12}^*(\phi)^{-1} \in \Gamma((Y \times_X Y \times_{X} Y)\times_K T, \Gm)$. Because $T$ is quasi-compact, there is a finite-dimensional $W \subset \Gamma(Y \times_X Y \times_X Y, \calO)$ such that $C', (C')^{-1} \in W_T$. Now we forget some of the particulars of our underlying situation, and we simply prove the following: For any finite-dimensional $W \subset \Gamma(Y \times_X Y \times_X Y, \calO)$, there is a finite-dimensional $V \subset \Gamma(Y \times_{X} Y, \calO)$ such that any $u \in \Gamma(Y \times_{X} Y, \calO)$ satisfying
\begin{equation}
\label{locbddesdatapfeqn3}
\pi_{12}^*(u)\pi_{23}^*(u) \in\pi_{13}^*(u)W
\end{equation}
lies in $V$. Then we see that all of the pairs $(s, u)$ as above defining a cocycle satisfy $u, u^{-1} \in V$.

To prove the claim, we first choose a normal compactification $\overline{X}$ of $X$, and let $\overline{Y}$ be the normal closure of $\overline{X}$ in the function field of $Y$. Then $\overline{Y}$ is a compactification of $Y$ living in a commutative diagram
\[
\begin{tikzcd}
Y \arrow[r, hookrightarrow] \arrow{d}{f} & \overline{Y} \arrow{d}{\overline{f}} \\
X \arrow[r, hookrightarrow] & \overline{X}
\end{tikzcd}
\]
We claim that $\overline{f}$ is radicial, and that the open locus $U \subset \overline{X}$ of points above which $\overline{f}$ is flat is a dense open with ${\rm{codim}}_{\overline{X}}(\overline{X}-U) \geq 2$. First, to see that $\overline{f}$ is radicial, let $V := {\rm{Spec}}(R) \subset \overline{X}$ be affine, and let ${\rm{Spec}}(S) = \overline{f}^{-1}(V)$. Let $t > 0$ such that $K(Y)^{p^t} \subset K(X)$. Then for any $s \in S$, $s^{p^t} \in K(X) = {\rm{Frac}}(R)$ is integral over $R$. Because $\overline{X}$ is normal, therefore, $s^{p^t} \in R$. Thus,
\begin{equation}
\label{locbddesdatapfeqn10}
\overline{f}_*\calO_{\overline{Y}}^{p^t} \subset \calO_{\overline{X}}.
\end{equation}
It follows that ${\rm{Spec}}(S) \rightarrow {\rm{Spec}}(R)$ is radicial. To prove the flatness claim, it suffices to prove that $\overline{f}$ is flat above each codimension $1$ point of $x \in \overline{X}$. Since $\overline{X}$ is normal, the local ring of $x$ is a DVR. Thus we seek to prove that the local ring of $\overline{Y}$ at any point above $x$ is $\calO_{\overline{X},x}$-torsion free, and this follows from the fact that $\overline{f}$ is dominant and $\overline{Y}$ is integral.

Let $D_1, \dots, D_n$ be the boundary divisors of $\overline{X}$ (that is, those irreducible divisors not intersecting $X$), and let $D := \sum_{i=1}^n D_i$. For each $N > 0$, consider the coherent sheaf $\calO_{\overline{X}}(ND)$. Let $g\colon \overline{Y} \times_{\overline{X}} \overline{Y} \rightarrow \overline{X}$ be the natural map, and let $V := g^{-1}(U) \subset \overline{Y} \times_{\overline{X}} \overline{Y}$. Note that $Y \times_X Y \subset V$ because $Y$ is flat over $X$. Consider the open embeddings $j\colon Y \times_X Y \hookrightarrow \overline{Y} \times_{\overline{X}} \overline{Y}$ and $j_V\colon V \hookrightarrow \overline{Y} \times_{\overline{X}} \overline{Y}$. Because the restriction of $g$ to $V \rightarrow U$ is flat by our choice of $U$, we have $${\rm{Ass}}_V(j_V^*g^*\calO_{\overline{X}}(ND)) \subset \overline{g}^{-1}({\rm{Ass}}_U(\calO_{\overline{X}}(ND))) = \eta,$$ where $\eta$ is the unique point above the generic point of $U$. By \cite[IV$_2$, Cor.\,5.11.4]{ega}, therefore, the sheaf $$\mathscr{H}_N := (j_V)_*j_V^*g^*\calO_{\overline{X}}(ND)$$ on $\overline{Y} \times_{\overline{X}} \overline{Y}$ is coherent. Because $Y \times_X Y \subset V$, we have $\mathscr{H}_N|_{Y \times_XY} \simeq g^*\calO(ND)|_{Y \times_X Y} \simeq \calO_{Y\times_XY}$, the last isomorphism because $Y$ lies over $X$, and $\calO(ND)|_X \simeq \calO_X$. We thus have an isomorphism $j^*\mathscr{H}_N \xrightarrow{\sim} \calO_{Y \times_X Y}$ which is adjoint to a map $\mathscr{H}_N \rightarrow j_*\calO_{Y\times_XY}$. Let $\mathscr{G}_N$ be the image of this map. It is a coherent sheaf, hence ${\rm{H}}^0(\overline{Y}\times_{\overline{X}}\overline{Y}, \mathscr{G}_N)$ is a finite-dimensional $K$-vector space. We will show that, for a suitable $N$, every $u$ satisfying (\ref{locbddesdatapfeqn3}) lies in ${\rm{H}}^0(\overline{Y}\times_{\overline{X}} \overline{Y}, \mathscr{G}_N) \subset {\rm{H}}^0(\overline{Y}\times_{\overline{X}}\overline{Y}, j_*\calO) = {\rm{H}}^0(Y\times_XY, \calO)$.

Now we claim that membership in $\mathscr{G}_N$ is determined by codimension-one points on the boundary of $\overline{Y} \times_{\overline{X}} \overline{Y}$. More precisely, we have the following lemma.

\begin{lemma}
\label{cod1pts}
Let $Z$ be the disjoint union of the spectra of the local rings of $\overline{Y}\times_{\overline{X}}\overline{Y}$ at the generic points of $g^{-1}(D_1), \dots, g^{-1}(D_n)$, and let $h\colon Z \hookrightarrow \overline{Y}\times_{\overline{X}}\overline{Y}$ be the natural map. Let $\mathscr{C}$ be the cokernel of the map $\mathscr{G}_N \rightarrow j_*\calO_{Y \times_X Y}$. Then the natural map $\mathscr{C} \rightarrow h_*h^*\mathscr{C}$ is an inclusion.
\end{lemma}

\begin{proof}
Consider the following diagram, in which the horizontal arrows are the inclusions and the vertical arrows are induced by $g$:
\[
\begin{tikzcd}
Y\times_X Y \arrow[bend left]{rr}{j} \arrow{d}{g_X} \arrow{r}{j_{Y\times_XY,V}} & V \arrow{d}{g_U} \arrow{r}{j_V} & \overline{Y}\times_{\overline{X}}\overline{Y} \\
X \arrow{r}{j_{XU}} & U &
\end{tikzcd}
\]
The isomorphism $j_{XU}^*\calO_{U}(ND) \xrightarrow{\sim} \calO_X$ is adjoint to an injective map $\calO_U(ND) \rightarrow (j_{XU})_*\calO_X$. Let $\mathscr{A}$ be the cokernel of this map, so we have an exact sequence of quasi-coherent sheaves
\begin{equation}
\label{cod1ptspfeqn2}
0 \longrightarrow \calO_U(ND) \longrightarrow (j_{XU})_*\calO_X \longrightarrow \mathscr{A} \longrightarrow 0.
\end{equation}
Let $R$ be the disjoint union of the spectra of the local rings of $\overline{X}$ at the generic points of $D_1, \dots, D_n$, and let $r\colon R \hookrightarrow U$ be the inclusion. (Note that $R$ does indeed map into $U$ because ${\rm{codim}}_{\overline{X}}(\overline{X}-U) \geq 2$.) Because $U \subset \overline{X}$ is normal, the canonical map
\begin{equation}
\label{cod1ptspfeqn3}
\mathscr{A} \rightarrow r_*r^*\mathscr{A}
\end{equation}
is an inclusion. Let $h'\colon Z \rightarrow V$ be the inclusion (so $h = j_V\circ h'$). Consider the following commutative diagram, which is Cartesian because $g$ is radicial:
\[
\begin{tikzcd}
Z \arrow{d}{t} \arrow{r}{h'} \arrow[dr, phantom, "\square"] & V \arrow{d}{g_U} \\
R \arrow{r}{r} & U
\end{tikzcd}
\]
Because $g_U$ is flat and $r$ is qcqs, the base change map $g_U^*r_*r^*\mathscr{A} \rightarrow h'_*t^*r^*\mathscr{A} = h'_*(h')^*g_U^*\mathscr{A}$ is an isomorphism. Thus, applying $g_U^*$ to the inclusion (\ref{cod1ptspfeqn3}) yields that the canonical adjunction map
\[
g_U^*\mathscr{A} \hookrightarrow h'_*(h')^*g_U^*\mathscr{A}
\]
is an inclusion. Then applying $(j_V)_*$ and using the equality $(j_V)_*(h')_*(h')^* = h_*h^*(j_V)_*$, we obtain that the adjunction
\begin{equation}
\label{cod1ptspfeqn4}
(j_V)_*g_U^*\mathscr{A} \hookrightarrow h_*h^*(j_V)_*g_U^*\mathscr{A}
\end{equation}
is an inclusion.

Since $g_U$ is flat, applying $g_U^*$ to (\ref{cod1ptspfeqn2}) yields an exact sequence
\begin{equation}
\label{cod1ptspfeqn1}
0 \longrightarrow g_U^*\calO_U(ND) \longrightarrow g_U^*(j_{XU})_*\calO_X \longrightarrow g_U^*\mathscr{A} \longrightarrow 0.
\end{equation}
Because $g_U$ is flat and $j_{XU}$ is qcqs, we have an equality $g_U^*(j_{XU})_*\calO_X \simeq (j_{Y\times_XY, V})_*g_X^*\calO_X \simeq (j_{Y\times_XY, V})_*\calO_{Y\times_XY}$. Substituting this into (\ref{cod1ptspfeqn1}) and applying $(j_V)_*$ then yields the exact sequence
\[
\mathscr{G}_N \longrightarrow j_*\calO_{Y\times_X Y} \longrightarrow (j_V)_*g_U^*\mathscr{A}.
\]
We therefore see that $\mathscr{C}$ is a subsheaf of $(j_V)_*g_U^*\mathscr{A}$, so the lemma follows from the inclusion (\ref{cod1ptspfeqn4}).
\end{proof}

For any section s of $\calO_{\overline{X}}$, one has that $s \in \calO_{\overline{X}}(nD)$ for some $n > 0$ depending on $s$. It follows that any section $u$ of $j_*\calO_{Y \times_X Y}$ lies in $\mathscr{H}_n$ for some $n > 0$. We must show that there is a uniform bound on the $n$ required as $u$ varies over global sections satisfying (\ref{locbddesdatapfeqn3}). Because $W$ is finite-dimensional, one similarly concludes that $W|_{\epsilon^{-1}(U)} \subset \epsilon^*(\calO_{\overline{X}}(mD))$ for some $m > 0$, where $\epsilon\colon \overline{Y}\times_{\overline{X}} \overline{Y} \times_{\overline{X}} \overline{Y} \rightarrow \overline{X}$ is the natural map.
 
Let $z$ be one the finitely many codimension-$1$ boundary points of $\overline{Y} \times_{\overline{X}} \overline{Y}$. We will show that there is $N > 0$ depending on $z$ such that every $u$ satisfying (\ref{locbddesdatapfeqn3}) lies in $\mathscr{G}_N$ near $z$. Since there are only finitely many $z$, this will suffice. Let $x := g(z)$ be the image of $z$ in $\overline{X}$, and let $y \in \overline{Y}$ be the (unique) boundary point lying above $x$. Also let $B := \calO_{\overline{Y}, y}$, $A := \calO_{\overline{X}, x}$. Then $A$ is a Japanese -- excellent even -- DVR, and $B$ is finite flat over $A$. Furthermore, by \ref{locbddesdatapfeqn10}, $B^{p^t} \subset A$. We are therefore in the situation of Lemma \ref{prodnondiv}. We claim that we may take $N = m+r-1$, where $r$ is as in that lemma.

To see this, let $\varpi$ be a uniformizer of $A$. Let $u$ satisfy (\ref{locbddesdatapfeqn3}), and let $n$ be the minimal nonnegative integer such that $u \in \mathscr{G}_n$ near $z$. We may assume that $n > 0$ (as otherwise of course $u \in \mathscr{G}_{m+r-1}$ near $z$). We abuse notation and also denote by $u$ its image in $(B \otimes_A B)[1/\varpi]$. Because $u \in \mathscr{G}_n$, one has $\varpi^nu \in B\otimes_A B$. We claim that $\varpi^nu \notin \varpi(B \otimes_A B)$. To see this, note first that $B\otimes_A B$ -- as a flat $A$-module -- is $\varpi$-torsion free. So if $\varpi^nu \in \varpi(B \otimes_A B)$, then one would have $\varpi^{n-1}u \in B \otimes_A B$, hence $u \in \mathscr{G}_{n-1}$ near $z$, in violation of the minimality of $n$. Now suppose for the sake of contradiction that $n \geq m+r$, and multiply both sides of (\ref{locbddesdatapfeqn3}) by $\varpi^{2n}$. Because $W \subset \mathscr{G}_m$, we conclude that
\[
\pi_{12}^*(\varpi^nu)\pi_{23}^*(\varpi^nu) \in \varpi^{n-m}(B \otimes_A B \otimes_A B).
\]
Because $n-m \geq r$, this violates Proposition \ref{intdomprop}, applied with $I = I' = (\varpi)$ and $I'' = (\varpi^r)$. We conclude, therefore, that $n \leq m+r-1$, as claimed. This completes the proof of the proposition.
\end{proof}

\section{Almost-representability of the restricted Picard functor}
\label{almostrepsection}

In this section we prove the almost-representability of the restricted Picard functor for regular $K$-schemes of finite type.

\begin{lemma}
\label{subet=et}
Any subsheaf of an \'etale sheaf $E$ on the smooth-\'etale site of $K$ is also \'etale.
\end{lemma}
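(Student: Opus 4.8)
The plan is to exploit the standard equivalence between \'etale $K$-group schemes and discrete $\Gal(K_s/K)$-modules, thereby reducing the statement to the assertion that a subsheaf of a \emph{constant} sheaf over a separably closed field is again constant. First I would reduce to the case where $K$ is separably closed: exactly as in the proof of Proposition \ref{H^i(G,et)}, \'etale descent lets us assume $K = K_s$, in which case the \'etale sheaf $E$ is the constant group sheaf $\underline{M}$ associated to the abelian group $M := E(K)$. Next, since any smooth $K$-scheme is the Zariski-disjoint union of its connected components and the sheaf condition for this cover expresses the value on a disjoint union as the product of the values on the pieces, it suffices to compute $\mathscr{F}(T)$ for \emph{connected} smooth $K$-schemes $T$. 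For such $T$ one has $\underline{M}(T) = M$, so $\mathscr{F}(T)$ is a subgroup of $M$.

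The crux is to show that this subgroup is independent of $T$. Set $N := \mathscr{F}(\Spec K) \subseteq M$. Because $K$ is separably closed and $T$ is smooth and (we may assume) nonempty, $T$ admits a $K$-rational point $s\colon \Spec K \to T$; let $\pi\colon T \to \Spec K$ be the structure map, so $\pi \circ s = \mathrm{id}$. Applying $\mathscr{F}$ yields maps $N = \mathscr{F}(\Spec K) \xrightarrow{\pi^*} \mathscr{F}(T) \xrightarrow{s^*} \mathscr{F}(\Spec K) = N$ whose composite is the identity. The key observation is that on the ambient constant sheaf $\underline{M}$ both $\pi^*$ and $s^*$ act as the identity of $M$ (restriction of a locally constant function along a point of a connected scheme just evaluates it). Since $\mathscr{F}(T) \subseteq M$ and $s^*$ carries $\mathscr{F}(T)$ into $N$ while acting as the identity on $M$, we get $\mathscr{F}(T) \subseteq N$; and $\pi^*$ carries $N$ into $\mathscr{F}(T)$ while acting as the identity, giving $N \subseteq \mathscr{F}(T)$. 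Hence $\mathscr{F}(T) = N$ for every connected $T$, so $\mathscr{F} = \underline{N}$ is the constant sheaf on $N$, represented by the \'etale (indeed constant) $K$-group scheme $\coprod_{n \in N}\Spec K$.

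Finally I would descend to a general base field. Over $K_s$ the argument identifies $\mathscr{F}_{K_s}$ with $\underline{N}$ for the subgroup $N = \varinjlim_{K'/K} \mathscr{F}(K') \subseteq M = E(K_s)$, the colimit taken over finite separable extensions; since $\mathscr{F}$ is a sheaf over $K$, this $N$ is stable under the natural $\Gal(K_s/K)$-action on $M$, and so defines an \'etale $K$-subgroup scheme $E_N \subseteq E$. One then checks that $E_N$ represents $\mathscr{F}$: both are subsheaves of $E$ with the same base change $\underline{N}$ to $K_s$, and equality of subsheaves of the \'etale sheaf $E$ may be verified after the faithfully flat base change to $K_s$. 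The main obstacle is the middle step --- the constant-sheaf computation over a separably closed field --- which hinges on the existence of rational points on nonempty smooth schemes over $K_s$ and on the fact that restriction of a constant sheaf along a section is the identity; the Galois-descent bookkeeping at the end, including the verification that $N$ is $\Gal(K_s/K)$-stable, is then routine.
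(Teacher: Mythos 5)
Your argument is correct, but note that the paper does not actually prove this lemma in place: it simply cites \cite[Lem.\,5.5]{rosmodulispaces} (the analogous statement for the geometrically reduced \'etale site) and remarks that the proof carries over to the smooth-\'etale site. What you have written is, in effect, that outsourced proof, and its engine is exactly the one you identify: over $K_s$ every nonempty smooth scheme has a rational point, the sandwich $s^*\circ\pi^*=\mathrm{id}$ together with the observation that both maps act as the identity on the ambient constant sheaf pins $\mathscr{F}(T)$ down to $\mathscr{F}(\Spec K_s)$ for connected $T$, and Galois descent finishes. Two small points of care. First, the ``reduction to $K=K_s$'' and the ``check after base change to $K_s$'' are not base changes along a covering in the smooth-\'etale site of $K$, since $\Spec K_s\to\Spec K$ is only pro-\'etale; both steps should be phrased through the colimit over finite separable subextensions $K'/K$ --- testing sections on the connected components of $T_{K'}$, which are themselves smooth $K$-schemes, and using a closed point of $T$ with finite separable residue field to get the inclusion $\mathscr{F}(T)\subseteq N$. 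Your definition $N=\varinjlim_{K'/K}\mathscr{F}(K')$ already sets this up correctly, and the Galois-stability of $N$ comes from functoriality of $\mathscr{F}$ applied to the $K$-isomorphisms $\Spec L\simeq\Spec(gL)$. Second, the group structure is never used: the identical argument shows that any subsheaf of \emph{sets} of an \'etale sheaf is \'etale, which matches the generality in which the lemma is stated and applied (e.g.\ to $\ker(\psi)\subset\oplus_D\Z$ in Lemma \ref{opensubdev}).
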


\begin{proof}
\cite[Lem.\,5.5]{rosmodulispaces} says that subsheaves of \'etale sheaves on the geometrically reduced \'etale site of ${\rm{Spec}}(K)$ are still \'etale, and the proof of that lemma also goes through for the smooth-\'etale site.
\end{proof}

As a d\'evissage step, we show that, in proving Theorem \ref{almostreprestpic}, we are free to restrict to an open subscheme.

\begin{lemma}
\label{opensubdev}
Let $K$ be a field, $X$ a finite type regular $K$-scheme and $U \subset X$ a dense open subscheme. Then $\Pic^+_{X/K}$ is almost-representable if and only if $\Pic^+_{U/K}$ is.
\end{lemma}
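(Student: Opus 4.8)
The plan is to relate line bundles on $X$ to line bundles on the dense open $U$ via the standard excision sequence coming from the boundary divisors, and then to transport almost-representability across this relation using the permanence properties already established. Since $X$ is regular and $U \subset X$ is dense open, the complement $X \backslash U$ is a closed subset; after passing to a finite separable (indeed Galois) extension of $K$ we may assume that the codimension-one irreducible components $D_1, \dots, D_n$ of $X \backslash U$ are all geometrically irreducible. Because $X$ is regular, each $D_i$ is a Cartier divisor in codimension one, and for any regular (in particular smooth) $K$-scheme $T$ the product $X \times T$ is again regular, with the $(D_i)_T$ serving as the boundary divisors of $U \times T$ inside $X \times T$.

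The key input is the excision exact sequence in the Picard group of a regular scheme: for any smooth connected $K$-scheme $T$ one has
\[
\bigoplus_{i=1}^n \Z \longrightarrow \Pic(X \times T) \longrightarrow \Pic(U \times T) \longrightarrow 0,
\]
where the first map sends the $i$-th generator to the class of $(D_i)_T$ and the surjectivity is the usual restriction-to-an-open fact for regular schemes. This is precisely the sequence alluded to in the introduction. Sheafifying in $T$ for the \'etale topology, and using that a constant sheaf $\Z^n$ has trivial higher sheaf cohomology issues here, I would obtain an exact sequence of sheaves on the smooth-\'etale site of $\Spec(K)$
\[
\Z^n \longrightarrow \Pic^+_{X/K} \longrightarrow \Pic^+_{U/K} \longrightarrow 0,
\]
where the image of $\Z^n$ is the \'etale sheaf generated by the classes of the boundary divisors (a finitely generated \'etale group scheme). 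I would descend the Galois extension at the end using Galois descent, exactly as in the proof of Proposition \ref{repofunits}, or alternatively observe that almost-representability is a condition that may be checked after finite separable base change by combining Lemma \ref{Ginvalrep} with the Hochschild-Serre argument of Proposition \ref{alrepdescgal}.

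From this exact sequence the equivalence follows from the permanence properties. If $\Pic^+_{X/K}$ is almost-representable, then so is $\Pic^+_{U/K}$, being the cokernel of a map from the finitely generated \'etale sheaf $\Z^n$; concretely, writing $\Pic^+_{X/K} \simeq G/E$ and letting $\overline{E} \subset G$ be the preimage of the image of $\Z^n$, one has $\Pic^+_{U/K} \simeq G/\overline{E}$, and $\overline{E}$ is still \'etale with finitely generated $K_s$-points by Lemma \ref{subet=et} and the fact that $\Z^n$ has finitely generated image. Conversely, if $\Pic^+_{U/K} \simeq H/F$ is almost-representable, then $\Pic^+_{X/K}$ is an extension of $H/F$ by the image $Q$ of $\Z^n$, which is a finitely generated \'etale (hence finite type with finitely generated component group) $K$-group scheme; Lemma \ref{extalrepaff} then gives almost-representability of $\Pic^+_{X/K}$. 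The main obstacle I anticipate is the careful bookkeeping of the sheafification step: one must verify that the image of $\Z^n$ really is the \'etale sheaf associated to the divisor classes and that the sequence remains exact after sheafification, which requires knowing that $\Pic^+_{X/K} \to \Pic^+_{U/K}$ is an epimorphism of sheaves (clear from naive surjectivity) and identifying the kernel with the divisor subsheaf; here one uses that the only relations among the $(D_i)_T$ come from principal divisors supported on the boundary, whose behavior is controlled by the unit groups already analyzed in Proposition \ref{repofunits}.
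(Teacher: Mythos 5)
Your proposal is correct and follows essentially the same route as the paper: surjectivity of $\Pic(X\times T)\to\Pic(U\times T)$ for the regular scheme $X\times T$, covering the kernel of restriction by $\oplus_D\Z$ after a finite Galois base change making the boundary divisors geometrically irreducible, Lemma \ref{subet=et} to see that this kernel is \'etale with finitely generated group of $K_s$-points, and a pullback/extension argument for the converse. The one cosmetic caveat is that for the converse you invoke Lemma \ref{extalrepaff}, whose statement literally requires the kernel to be a finite type group scheme, whereas your $Q$ is merely \'etale locally of finite type with finitely generated $K_s$-points; the paper instead runs the pullback construction directly via Lemma \ref{repettors}, which is what you should do (or observe that the proof of Lemma \ref{extalrepaff} applies verbatim to locally finite type kernels with finitely generated component group).
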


\begin{proof}
Let $j\colon U \rightarrow X$ denote the inclusion, and consider the restriction map $j^*\colon \Pic^+_{X/K} \rightarrow \Pic^+_{U/K}$. For every smooth $K$-scheme $T$, $X \times T$ is regular, hence $\Pic(X \times T) \rightarrow \Pic(U \times T)$ is surjective. It follows that $j^*$ is a surjection of sheaves on the smooth-\'etale site of ${\rm{Spec}}(K)$. We claim that $\ker(j^*)$ is \'etale with finitely generated group of $K_s$-points, which will pprove the only if direction of the lemma. By Galois descent, it suffices to show this after extending scalars to some finite Galois extension $L/K$, so we may assume that all of the codimension-$1$ components of $X\backslash U$ are geometrically irreducible. In that case, if the components are $D_1, \dots, D_n$, then the natural map $\psi\colon \oplus_D \Z \rightarrow \ker(j^*)$ is surjective. By Lemma \ref{subet=et}, $\ker(\psi)$ is a constant group scheme, so $\ker(j^*)$ is \'etale (with finitely generated group of $K_s$-points).

Now we turn to the proof of the if direction (so $K$ need no longer be separably closed). Write $\Pic^+_{U/K} \simeq G/E$ with $G$ a smooth commutative $K$-group scheme with finitely generated component group, and $E \subset G$ \'etale with $E(K_s)$ of finite rank, and let $H$ be defined by the following Cartesian square:
\[
\begin{tikzcd}
H \arrow{r} \arrow{d} \arrow[dr, phantom, "\square"] & \Pic^+_{X/K} \arrow{d}{j^*} \\
G \arrow{r} & \Pic^+_{U/K}
\end{tikzcd}
\]
Then $H$ fits into an exact sequence
\[
0 \longrightarrow \ker(j^*) \longrightarrow H \longrightarrow G \longrightarrow 0.
\]
Lemma \ref{repettors} implies that $H$ is a (smooth) scheme, hence a smooth commutative $K$-group scheme. Its component group is finitely generated, because the same holds for $G$ and $\ker(j^*)$. We thus have that $\Pic^+_{X/K} \simeq H/E$ is almost-representable.
\end{proof}

We now prove the almost-representability of the restricted Picard functor.

\begin{theorem}
\label{represtpicmainbody}
For any regular $K$-scheme $X$ of finite type, $\Pic^+_{X/K}$ is almost-representable.
\end{theorem}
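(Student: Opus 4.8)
The plan is to reduce, by means of alterations, to a regular scheme admitting a regular compactification—where almost-representability can be read off directly from the Picard scheme of the compactification—and to bridge the gap to a general $X$ by descent through a Galois cover (Proposition \ref{alrepdescgal}) followed by a radicial cover (Propositions \ref{descanL/K}, \ref{locbddesdata}, \ref{bddcocycimprep}). First, some harmless reductions: since the Picard group of a disjoint union is a product and a finite product of almost-representable sheaves is almost-representable, I may assume $X$ is connected, hence integral (being regular); its normality will be used throughout. By Lemma \ref{opensubdev} I am free to replace $X$ by any dense open subscheme whenever convenient.

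Next I treat the base case in which $X$ is a dense open subscheme of a regular proper $K$-scheme $\overline{X}$. By the Murre--Oort theorem $\Pic_{\overline{X}/K}$ is represented by a locally finite type $K$-group scheme with finitely generated (N\'eron--Severi) component group; let $G$ be its maximal smooth $K$-subgroup scheme, which again has finitely generated component group. Because line bundles are Zariski-locally trivial, the \'etale sheafification of $T \mapsto \Pic(T)$ vanishes, and since $G$ computes $\Pic_{\overline{X}/K}$ on smooth test schemes one identifies $\Pic^+_{\overline{X}/K}$ with $G$. After extending scalars to arrange that the codimension-one components $D_1, \dots, D_n$ of $\overline{X} \setminus X$ are geometrically irreducible (tracked by the usual component-group bookkeeping, as in the proofs of Proposition \ref{repofunits} and Lemma \ref{opensubdev}), the divisor sequence $\oplus_D \Z \to \Pic(\overline{X} \times T) \to \Pic(X \times T) \to 0$ for connected smooth $T$ sheafifies to exhibit $\Pic^+_{X/K}$ as the quotient of $G$ by the \'etale, finitely generated subgroup generated by the boundary classes. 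Hence $\Pic^+_{X/K}$ is almost-representable, the presentation being cleaned up by Lemma \ref{quotalmrep}.

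For the general case I apply de Jong's alteration theorem in the equivariant form of \cite{alterations2}: there is an alteration $\widetilde{X} \to X$ with $\widetilde{X}$ regular and admitting a regular compactification, together with a finite group $\Gamma$ acting on $\widetilde{X}$ over $X$ such that the fixed field $K(\widetilde{X})^{\Gamma}$ is purely inseparable over $K(X)$. Let $X_1$ denote the normalization of $X$ in $K(\widetilde{X})^{\Gamma}$, so that $\widetilde{X} \to X_1$ is generically Galois with group $\Gamma$ while $X_1 \to X$ is generically radicial. Shrinking $X$ (and passing to preimages), I may assume that $\widetilde{X}$ and $X_1$ are connected normal finite type $K$-schemes, that $X$ is affine and irreducible, that $X_1 \to X$ is finite flat radicial, and that $\widetilde{X} \to X_1$ is a finite Galois cover. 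The base case applies to $\widetilde{X}$ and shows $\Pic^+_{\widetilde{X}/K}$ is almost-representable; Proposition \ref{alrepdescgal} then descends this through the Galois cover to give almost-representability of $\Pic^+_{X_1/K}$. Finally, Proposition \ref{descanL/K} identifies ${\rm{DesCan}}_{X_1/X}$ with $\Pic^+_{X_1/K}$, Proposition \ref{locbddesdata} furnishes the local boundedness of the descent data for line bundles on $X_1$, and Proposition \ref{bddcocycimprep} concludes that $\Pic^+_{X/K}$ is almost-representable.

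The hard part lies not in this final assembly but in the ingredients already established: extracting the Galois-then-radicial factorization $\widetilde{X} \to X_1 \to X$ from de Jong's equivariant alteration, so that the separable and inseparable parts of $K(\widetilde{X})/K(X)$ are handled by the two distinct descent mechanisms, and the local boundedness of radicial descent data from Proposition \ref{locbddesdata}, which is the technical heart of the paper. The main subtlety internal to this proof is the coordinated shrinking of $X$ required to make the finiteness, flatness, normality, affineness, and geometric-irreducibility hypotheses of the cited results hold simultaneously, which is justified repeatedly by Lemma \ref{opensubdev}.
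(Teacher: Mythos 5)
Your overall architecture coincides with the paper's: establish the compactified case via Murre--Oort plus the boundary-divisor sequence, then descend through the Galois layer of a de Jong alteration via Proposition \ref{alrepdescgal}, and finally through the radicial layer via Propositions \ref{descanL/K}, \ref{locbddesdata} and \ref{bddcocycimprep}. Over a perfect field (in particular in characteristic $0$) your argument is essentially the paper's proof. The gap is in how you handle the inseparable base-field extension that de Jong's theorem forces when $K$ is imperfect, and it occurs in two linked places. First, \cite[Cor.\,5.15]{alterations2} does not directly hand you, over an arbitrary $K$, a tower $\widetilde{X} \to X_1 \to X$ of $K$-schemes in which $\widetilde{X}$ admits a regular compactification \emph{over $K$} with the stated Galois/radicial factorization; the paper applies it over $K_{\mathrm{perf}}$, where regular equals smooth, and then descends the whole picture to a finite purely inseparable extension $L/K$, so that the compactification $\overline{Y}_2$ is $L$-smooth but, as a $K$-scheme, is regular without being geometrically reduced.

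Second, and more seriously, your base case identifies $\Pic^+_{\overline{X}/K}$ with the maximal smooth subgroup of the Murre--Oort Picard scheme for an arbitrary regular proper $\overline{X}/K$. The Murre--Oort functor is the \emph{fppf} sheafification of $T \mapsto \Pic(\overline{X}\times T)$, whereas $\Pic^+$ is by definition the \emph{\'etale} sheafification; your justification (``line bundles are Zariski-locally trivial'') addresses neither the surjectivity nor the injectivity of the comparison map, which hinge on $f_*\calO_{\overline{X}}$ and on whether classes of the fppf Picard scheme are represented by line bundles \'etale-locally on the test scheme. The paper secures this identification precisely by invoking geometric reducedness of the $L$-smooth $\overline{Y}_2$ together with \cite[Th.\,9.2.5, Exercise 9.3.11]{fgaexplained}, runs the compactified case and the Galois descent over $L$, and only then transfers to $K$ using that $\Spec(L) \to \Spec(K)$ is radicial, hence induces an equivalence of \'etale sites, so that $\Pic^+_{Y_1/K} \simeq \R_{L/K}(\Pic^+_{Y_1/L})$ and $\R_{L/K}$ preserves almost-representability. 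This Weil-restriction step is absent from your proposal, and without it (or an independent proof that $\Pic^+_{\overline{X}/K}$ is almost-representable for regular proper $\overline{X}$ that are not geometrically reduced) the base case as you have stated it does not go through over an imperfect field, which is exactly the case the theorem is hardest in.
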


\begin{proof}
Working component by component, we may assume that $X$ is irreducible. Then by \cite[Cor.\,5.15]{alterations2}, there are alterations $Y_2 \xrightarrow{f_2} Y_1 \xrightarrow{f_1} (X_{K_{{\rm{perf}}}})_{\rm{red}}$ with $f_2$ generically Galois and $f_1$ generically purely inseparable (that is, the function field extension $k(Y_1)/k((X_{K_{{\rm{perf}}}})_{\rm{red}})$ is purely inseparable) and such that $Y_2$ admits a regular, hence $K_{{\rm{perf}}}$-smooth, compactification $\overline{Y}_2$. The data descends from $K_{\rm{perf}}$ down to some finite purely inseparable extension $L/K$, and then renaming and postcomposing with the map $(X_L)_{\rm{red}} \rightarrow X$, we find that there exist a finite purely inseparable extension $L/K$ and dominant generically finite morphisms $Y_2 \xrightarrow{f_2} Y_1 \xrightarrow{f_1} X$ between integral finite type $K$-schemes such that (1) $Y_2$ admits an $L$-smooth compactification $\overline{Y}_2$; (2) $f_2$ is generically Galois; and (3) $f_1$ is generically purely inseparable. Shrinking $X$ using lemma \ref{opensubdev}, we may assume that $f_2$ is finite Galois and that $f_1$ is finite flat radicial.

The Picard functor $\Pic_{\overline{Y}_2/L}$ of the proper $L$-scheme $\overline{Y}_2$ is represented by a locally finite type $L$-scheme, which by the classical theorem of the base has finitely generated component group. Furthermore, this functor is the \'etale (rather than merely fppf) sheafification of the naive Picard functor $T \mapsto \Pic(\overline{Y}_2 \times_L T)$ on the category of $L$-schemes, because $\overline{Y}_2$ is geometrically reduced. (Work on each geometric component of $\overline{Y}_2$, and use \cite[Th.\,9.2.5, Exercise 9.3.11]{fgaexplained}.) When we restrict to the category of smooth $L$-schemes, $\Pic_{\overline{Y}_2/L}$ agrees with the functor represented by its maximal smooth $L$-subgroup scheme. It follows that $\Pic^+_{\overline{Y}_2/L}$ is almost-representable (representable even). By Lemma \ref{opensubdev}, so too is $\Pic^+_{Y_2/L}$. Proposition \ref{alrepdescgal} then implies that $\Pic^+_{Y_1/L}$ is almost-representable. Now, just by definition, $$(\Pic^+_{Y_1/K})^{\rm{naive}} = \R_{L/K}((\Pic^+_{Y_1/L})^{\rm{naive}}).$$ Because $L/K$ is finite radicial surjective, it induces an equivalence of \'etale sites \cite[Exp.\,IX, Th.\,4.10]{sga}, hence $\R_{L/K}$ commutes with \'etale sheafification, so we obtain an isomorphism $$\Pic^+_{Y_1/K} \simeq \R_{L/K}(\Pic^+_{Y_1/L}).$$ Since pushforward through the finite morphism ${\rm{Spec}}(L) \rightarrow {\rm{Spec}}(K)$ is an exact functor between \'etale sites, it preserves almost-representability, so $\Pic^+_{Y_1/K}$ is almost-representable. Therefore, combining Propositions \ref{bddcocycimprep}, \ref{descanL/K}, and \ref{locbddesdata}, we obtain that $\Pic^+_{X/K}$ is almost-representable.
\end{proof}

Next we will prove the uniqueness of the connected group $G^0$ arising in a presentation $\Pic^+_{X/K} \simeq G/E$ of $\Pic^+_{X/K}$ as an almost-representable functor. We require a couple of lemmas.

\begin{lemma}
\label{ExtbyZ}
Let $G$ be a normal connected commutative $K$-group scheme of finite type. Then $\Ext^1(G, \Z^n) = 0$.
\end{lemma}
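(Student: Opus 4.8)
The plan is to prove that every extension splits by exhibiting the identity component of the middle term as a copy of $G$. Since $\Ext^1(G, \Z^n) = \Ext^1(G, \Z)^n$, I first reduce to the case $n = 1$. So consider an extension of sheaves on the smooth-\'etale site $0 \longrightarrow \Z \longrightarrow E \xrightarrow{q} G \longrightarrow 0$. Applying Lemma \ref{repettors} with $H = \Z$ and $Q = G$ (both locally of finite type over $K$), I obtain that $E$ is represented by a smooth commutative $K$-group scheme. In particular $E$ is an object of the smooth-\'etale site, so by Yoneda the sheaf map $q$ is an honest surjective homomorphism of $K$-group schemes $E \to G$. Because $\Z$ is $0$-dimensional, the fibres of $q$ are $0$-dimensional, whence $\dim E = \dim G$.

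Next I pass to the scheme-theoretic identity component $E^0$, a smooth connected finite type $K$-group scheme with $\dim E^0 = \dim E = \dim G$. The crucial observation is that $E^0 \cap \Z$ is trivial. Indeed, $E^0 \cap \Z$ is a closed subgroup scheme of $\Z$ which is of finite type, being closed in the finite type scheme $E^0$; hence it is supported on only finitely many of the components $\coprod_{n \in \Z} \Spec K$ of $\Z$, so it corresponds to a \emph{finite} subgroup of $\Z$, which can only be $0$. (Informally: the components of $\Z$ are too spread out to fit inside the quasi-compact $E^0$.) In particular the restriction $q|_{E^0}\colon E^0 \to G$ has kernel $E^0 \cap \Z = 0$, so it is a monomorphism.

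Finally I upgrade this monomorphism to an isomorphism, and this is where normality enters. The map from $E^0$ onto its scheme-theoretic image factors $q|_{E^0}$ as a faithfully flat homomorphism onto a closed subgroup scheme $H \subseteq G$; since $q|_{E^0}$ has trivial kernel, this faithfully flat map $E^0 \to H$ is an isomorphism, and therefore $\dim H = \dim E^0 = \dim G$. Now $G$, being normal and connected, is integral of dimension $\dim G$, so a closed subscheme $H$ of the same dimension must contain the generic point; thus $H_{\mathrm{red}} = G$, and as $G$ is reduced this forces $H = G$. Hence $q|_{E^0}\colon E^0 \xrightarrow{\sim} G$, and the composite of its inverse with the inclusion $E^0 \hookrightarrow E$ is a homomorphic section of $q$, so the extension splits. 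I expect the main obstacle to be precisely the surjectivity of $E^0 \to G$: its content is the triviality of $E^0 \cap \Z$ combined with the integrality of $G$ coming from the normal-and-connected hypothesis. Some such input is genuinely essential, since the statement fails for disconnected $G$ — for instance $\Ext^1(\Z/m, \Z) = \Z/m \neq 0$.
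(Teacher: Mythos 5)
Your proof is correct, but it takes a genuinely different route from the paper's. The paper argues cohomologically: normality of $G$ gives ${\rm{H}}^1_{\et}(G, \Z) = 0$ (a normal scheme carries no nontrivial $\Z$-torsors), so by \cite[Ch.\,II, \S3, Prop.\,2.3]{dg} every extension of $G$ by $\Z$ is classified by the Hochschild cohomology group ${\rm{H}}^2_0(G, \Z)$; since $G \times G$ is connected, any $2$-cocycle $G \times G \rightarrow \Z$ is constant, hence a coboundary. You instead split the extension geometrically: Lemma \ref{repettors} represents the middle term $E$ by a smooth group scheme, $E^0 \cap \Z = 0$ by quasi-compactness of $E^0$, and then normality enters as \emph{integrality} of $G$, promoting the resulting closed immersion $E^0 \hookrightarrow G$ of full dimension to an isomorphism whose inverse furnishes the section. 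Both arguments use the two hypotheses in essential but non-parallel ways (the paper: no $\Z$-torsors and constancy of cocycles; you: irreducibility plus reducedness of $G$), and your closing remark about $\Ext^1(\Z/m, \Z) \neq 0$ correctly isolates why connectedness cannot be dropped. The paper's proof is shorter once the dictionary with Hochschild cohomology is accepted; yours is more self-contained within the paper's own toolkit, reusing Lemma \ref{repettors} and avoiding the torsor-theoretic input. One small tightening worth noting: the step $\dim E = \dim G$ presupposes that $q$ is surjective on underlying schemes, which you extract from sheaf surjectivity by testing against ${\rm{id}}_G$ and hence tacitly uses that $G$ is smooth (true in every application of the lemma). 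You can sidestep both this point and the dimension count entirely by observing that $q|_{E^0}$ is smooth (being the restriction of the smooth $q$ to the open subscheme $E^0$), hence open, while also being a closed immersion; its image is then a nonempty open and closed subscheme of the connected, reduced $G$, forcing $E^0 \xrightarrow{\sim} G$ directly.
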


\begin{proof}
We may assume that $n = 1$. Because $G$ is normal, ${\rm{H}}^1(G, \Z) = 0$. By \cite[Ch.\,II, \S3, Prop.\,2.3]{dg} or \cite[Prop.\,B.2.5]{conradreductive}, therefore, the commutative extensions of $G$ by $\Z$ may be canonically identified with a subgroup of the second Hochschild cohomology group ${\rm{H}}^2_0(G, \Z)$. Because $G$ is geometrically connected, $G \times G$ is connected, so any $2$-cocycle $G \times G \rightarrow \Z$ is constant, hence is a coboundary. It follows that $\Ext^1(G, \Z) = 0$.
\end{proof}

\begin{lemma}
\label{homtoG/E}
Let $G$ be a smooth connected $K$-group scheme, $\mathscr{F}$ a sheaf of abelian groups on the smooth-\'etale site of ${\rm{Spec}}(K)$, $E \subset \mathscr{F}$ a subsheaf represented by an \'etale $K$-group scheme such that $E(K_s)$ is free of finite rank, and $\phi\colon G \rightarrow \mathscr{F}/E$ a homomorphism. Then $\phi$ lifts uniquely to a homomorphism $\psi\colon G \rightarrow \mathscr{F}$.
\end{lemma}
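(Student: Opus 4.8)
The plan is to recast the lifting problem as the splitting of an extension, to split it over $K_s$ by means of Lemma \ref{ExtbyZ}, and then to descend the splitting using the vanishing of $\Hom$ from a connected group scheme to an \'etale one.

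I would begin by forming the fiber product $H := G \times_{\mathscr{F}/E} \mathscr{F}$, which sits in a short exact sequence of sheaves on the smooth-\'etale site
\[
0 \longrightarrow E \longrightarrow H \longrightarrow G \longrightarrow 0,
\]
the map $H \to G$ being the pullback of the quotient $\mathscr{F} \to \mathscr{F}/E$. By Lemma \ref{repettors}, $H$ is represented by a smooth $K$-group scheme. A homomorphism $\psi\colon G \to \mathscr{F}$ lifting $\phi$ is exactly the same datum as a homomorphic section of $H \to G$; thus the lemma is equivalent to the assertion that this extension splits, and splits uniquely.

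Uniqueness is immediate: if $\psi, \psi'$ both lift $\phi$, then $\psi - \psi'\colon G \to \mathscr{F}$ is a homomorphism dying in $\mathscr{F}/E$, hence factoring through $E$; but $\Hom_K(G, E) = 0$, since any homomorphism from a connected group scheme to an \'etale one is trivial. For existence I would first reduce to the case that $G$ is commutative: because $\mathscr{F}/E$ is abelian, $\phi$ factors through the smooth connected commutative quotient $G/\mathscr{D}(G)$, and because $\mathscr{F}$ is abelian every lift factors likewise, so a lift over $G/\mathscr{D}(G)$ suffices. Now $G$ is normal (being smooth), connected and commutative, and of finite type (a connected group scheme locally of finite type over a field is automatically of finite type), and it remains so after the faithfully flat base change to $K_s$ (connectedness being preserved because $G$, having the identity as a rational point, is geometrically connected). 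Hence Lemma \ref{ExtbyZ} is available over $K_s$.

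The crux is then the split-and-descend step. Over $K_s$ the group $E_{K_s}$ is a constant free group scheme $\underline{\Z}^n$, so Lemma \ref{ExtbyZ} gives $\Ext^1_{K_s}(G_{K_s}, E_{K_s}) = 0$ and thus a splitting $s_0$ of the extension over $K_s$. The essential observation is that $s_0$ is \emph{unique}: the splittings form a torsor under $\Hom_{K_s}(G_{K_s}, E_{K_s})$, which vanishes by the connected-to-\'etale principle used above. As $G$ and $H$ are of finite type, $s_0$ is already defined over a finite Galois extension $L/K$, where it is likewise the unique splitting; consequently $s_0$ is $\Gal(L/K)$-stable and, by Galois descent of morphisms, descends to a homomorphic section of $H \to G$ over $K$, yielding the sought-after $\psi$. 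I expect the only real subtlety to lie in this last descent --- confirming that the unique $K_s$-splitting is Galois-invariant and that its descent is again a group homomorphism --- since the cohomological vanishing is supplied directly by Lemma \ref{ExtbyZ}.
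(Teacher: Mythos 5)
Your proposal is correct and follows essentially the same route as the paper's proof: reduce to commutative $G$, use the uniqueness of lifts (vanishing of homomorphisms from a connected group scheme to an \'etale one) to descend through a finite Galois extension over which $E \simeq \Z^n$, and kill the existence obstruction with Lemma \ref{ExtbyZ}. Your explicit representation of the extension $H$ as a smooth group scheme via Lemma \ref{repettors} is just a careful elaboration of a step the paper leaves implicit.
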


\begin{proof}
We may replace $G$ by $G/\mathscr{D}G$ and thereby assume that $G$ is commutative. Thanks to Galois descent and the uniqueness aspect of the lemma, we may assume that $E \simeq \Z^n$. The obstruction to lifting $\phi$ lies in $\Ext^1(G, E)$, which vanishes by Lemma \ref{ExtbyZ}. Given two lifts $\psi$, $\psi'$, their difference yields a homomorphism $G \rightarrow E$, which must be trivial because $G$ is connected and $E$ is \'etale.
\end{proof}

We may now prove the uniqueness of $G^0$ in Theorem \ref{almostreprestpic}.

\begin{proposition}
\label{G^0unique}
Let $G_1, G_2$ be smooth $K$-group schemes, $E_i \subset G_i$ subsheaves on the smooth-\'etale site represented by \'etale $K$-group schemes such that $E_i(K_s)$ is free of finite rank. Given an isomorphism $\phi\colon G_1/E_1 \xrightarrow{\sim} G_2/E_2$, there is a unique map $\psi\colon G_1^0 \rightarrow G_2^0$ such that the induced map $G_1^0/(E_1 \cap G_1^0) \rightarrow G_2^0/(E_2\cap G_2^0)$ is the restriction of $\phi$. Further, $\psi$ is an isomorphism.
\end{proposition}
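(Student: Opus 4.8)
The plan is to construct $\psi$ by lifting a homomorphism across the quotient $G_2 \to G_2/E_2$ using Lemma \ref{homtoG/E}, and then to extract both uniqueness and bijectivity from the uniqueness clause of that same lemma. Write $q_i \colon G_i \to G_i/E_i$ for the quotient maps.

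First I would consider the homomorphism $\overline{\psi} := \phi \circ (q_1|_{G_1^0}) \colon G_1^0 \to G_2/E_2$. Since $G_1^0$ is a smooth connected $K$-group scheme (the identity component of a smooth group scheme over a field is smooth, connected, and of finite type, so the finite-type hypothesis implicit in Lemma \ref{homtoG/E} via Lemma \ref{ExtbyZ} is satisfied) and $E_2$ is \'etale with $E_2(K_s)$ free of finite rank, Lemma \ref{homtoG/E} applies with $\mathscr{F} = G_2$ and yields a unique homomorphism $\widetilde{\psi}\colon G_1^0 \to G_2$ lifting $\overline{\psi}$, i.e. $q_2 \circ \widetilde{\psi} = \overline{\psi}$. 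As $G_1^0$ is connected and $G_2^0 \subset G_2$ is an open and closed subgroup scheme containing $\widetilde{\psi}(1) = 1$, the morphism $\widetilde{\psi}$ factors through $G_2^0$; I denote the resulting map by $\psi \colon G_1^0 \to G_2^0$.

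Next I would verify that $\psi$ induces the claimed map on quotients. Because $\phi$ is an isomorphism, $\ker(\overline{\psi}) = \ker(q_1|_{G_1^0}) = E_1 \cap G_1^0$; and for $x$ in this kernel, $q_2(\psi(x)) = \overline{\psi}(x)$ is trivial while $\psi(x) \in G_2^0$, so $\psi(x) \in E_2 \cap G_2^0$. Hence $\psi$ descends to a map $G_1^0/(E_1 \cap G_1^0) \to G_2^0/(E_2 \cap G_2^0)$, and composing with the monomorphism $G_2^0/(E_2 \cap G_2^0) = \operatorname{im}(q_2|_{G_2^0}) \hookrightarrow G_2/E_2$ recovers $\overline{\psi}$, i.e. the restriction of $\phi$ to the subsheaf $G_1^0/(E_1 \cap G_1^0) = \operatorname{im}(q_1|_{G_1^0}) \subset G_1/E_1$, as required. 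Uniqueness of $\psi$ is then immediate: any map inducing the restriction of $\phi$ is, after composing with $q_2$, a lift of $\overline{\psi}$, so it agrees with $\psi$ by the uniqueness in Lemma \ref{homtoG/E}.

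Finally, to see that $\psi$ is an isomorphism I would run the same construction with $\phi^{-1}$ in place of $\phi$, obtaining $\psi' \colon G_2^0 \to G_1^0$ with $q_1 \circ \psi' = \phi^{-1} \circ (q_2|_{G_2^0})$. Then $q_1 \circ (\psi' \circ \psi) = \phi^{-1} \circ q_2 \circ \psi = \phi^{-1} \circ \overline{\psi} = q_1|_{G_1^0}$, so both $\psi' \circ \psi$ and $\operatorname{id}_{G_1^0}$ are lifts through $q_1$ (valued in $G_1$) of the natural map $q_1|_{G_1^0}\colon G_1^0 \to G_1/E_1$; by the uniqueness in Lemma \ref{homtoG/E} they coincide, whence $\psi' \circ \psi = \operatorname{id}_{G_1^0}$. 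Symmetrically $\psi \circ \psi' = \operatorname{id}_{G_2^0}$, so $\psi$ is an isomorphism. I expect the only genuinely delicate points to be confirming that $\widetilde{\psi}$ factors through $G_2^0$ (a connectedness argument) and taking care that the uniqueness clause of Lemma \ref{homtoG/E} is invoked for lifts valued in $G_i$ rather than $G_i^0$; everything else is bookkeeping with kernels.
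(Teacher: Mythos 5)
Your proposal is correct and follows essentially the same route as the paper: lift $\phi\circ(q_1|_{G_1^0})$ through $q_2$ via Lemma \ref{homtoG/E}, observe the lift lands in $G_2^0$ by connectedness, and obtain both uniqueness and invertibility from the uniqueness clause of that lemma applied to $\phi$ and $\phi^{-1}$. The extra care you take (finite-typeness of $G_1^0$ for Lemma \ref{ExtbyZ}, and the explicit check that $\psi$ induces the restriction of $\phi$ on the quotients) fills in details the paper leaves implicit but introduces nothing new in substance.
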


\begin{proof}
Consider the map $G_1^0 \rightarrow G_1/E_1 \xrightarrow{\phi} G_2/E_2$. By Lemma \ref{homtoG/E}, this lifts uniquely to a map $G_1^0 \rightarrow G_2$, which must land inside $G_2^0$. This provides our unique $\psi$. To see that it is an isomorphism, apply the existence result to $\phi^{-1}$ to obtain a map $\gamma\colon G_2^0 \rightarrow G_1^0$. Then $\gamma \circ \psi$ provides a lift on the connected component of the identity map of $G_1/E_1$, so by uniqueness it is the identity. A similar argument shows the same for $\psi\circ \gamma$.
\end{proof}

We now restate and prove the main Theorem \ref{almostreprestpic}.

\begin{theorem}$($Theorem $\ref{almostreprestpic}$$)$
\label{almostreprestpicbody}
If $K$ is a field and $X$ is a regular $K$-scheme of finite type, then we have an isomorphism $\Pic^+_{X/K} \simeq G/E$ of sheaves on the smooth-\'etale site of ${\rm{Spec}}(K)$, where $G$ is a smooth commutative $K$-group scheme with finitely generated component group and $E \subset G^t$ is an \'etale $K$-group scheme such that $E(K_s)$ is a free abelian group of finite rank. Furthermore, $G^0$ is unique up to unique isomorphism.
\end{theorem}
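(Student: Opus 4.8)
The plan is to assemble three results already established in the paper, with essentially all of the substantive work lying upstream. First I would invoke Theorem \ref{represtpicmainbody}, which asserts precisely that $\Pic^+_{X/K}$ is almost-representable in the sense of Definition \ref{almostrepdef}. This is where the real content resides: it rests on descending almost-representability through finite Galois covers (Proposition \ref{alrepdescgal}) and through finite flat radicial covers (via Propositions \ref{bddcocycimprep}, \ref{descanL/K}, and the local boundedness of the descent data furnished by Proposition \ref{locbddesdata}), all applied after using de Jong's alterations to reduce to a scheme admitting an $L$-smooth compactification, whose Picard functor is classically representable.

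Once almost-representability is in hand, the refined shape of the presentation follows immediately from Lemma \ref{quotalmrep}, which upgrades an arbitrary almost-representable presentation to an isomorphism $\Pic^+_{X/K} \simeq G/E$ in which $G$ is a smooth commutative $K$-group scheme with finitely generated component group and $E \subset G^t$ is an \'etale $K$-group scheme with $E(K_s)$ free of finite rank. This yields the first assertion of the theorem verbatim; the only point requiring care is to route the presentation through Lemma \ref{quotalmrep} rather than using the bare definition of almost-representability, so as to secure both the containment $E \subset G^t$ and the freeness of $E(K_s)$.

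For the uniqueness of $G^0$, I would appeal to Proposition \ref{G^0unique}. Given any two presentations $G_1/E_1 \simeq \Pic^+_{X/K} \simeq G_2/E_2$ of the form produced above—so that each $E_i(K_s)$ is free of finite rank—composing the two isomorphisms furnishes an isomorphism $\phi\colon G_1/E_1 \xrightarrow{\sim} G_2/E_2$. Proposition \ref{G^0unique} then supplies a map $\psi\colon G_1^0 \rightarrow G_2^0$ compatible with $\phi$, unique as such, and asserts that $\psi$ is an isomorphism; this is exactly the statement that $G^0$ is unique up to unique isomorphism. As every ingredient is already in place, I anticipate no genuine obstacle in this final assembly, which is purely formal.
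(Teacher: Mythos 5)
Your proposal is correct and matches the paper's proof exactly: the paper deduces Theorem \ref{almostreprestpicbody} by combining Theorem \ref{represtpicmainbody}, Lemma \ref{quotalmrep}, and Proposition \ref{G^0unique}, which is precisely the assembly you describe.
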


\begin{proof}
Combine Theorem \ref{represtpicmainbody}, Lemma \ref{quotalmrep}, and Proposition \ref{G^0unique}.
\end{proof}

\section{Structure of the restricted Picard functor}
\label{structuresection}

In this section we investigate the structure of the restricted Picard functor, which -- thanks to Theorem \ref{represtpicmainbody} -- we now know to be almost-representable for regular, finite type $X/K$. In order to do this, we first require a means of determining when an element of $\Pic^+_{X/K}(S)$ arises from an element of $\Pic(X \times_K S)$, which is the purpose of the following proposition.

\begin{proposition}
\label{lbobstacle}
Let $X$ be a $K$-scheme, and $S$ a smooth $K$-scheme. Then there is an exact sequence, functorial in both $S$ and $X$:
\[
0 \longrightarrow {\rm{H}}^1(S, f_*(\Gm)) \longrightarrow \Pic(X \times_K S) \longrightarrow \Pic^+_{X/K}(S) \longrightarrow {\rm{H}}^2(S, f_*(\Gm)),
\]
where $f\colon X \times S \rightarrow S$ is the projection, and the cohomology is \'etale. If $X(S) \neq \emptyset$, then the above sequence induces an exact sequence
\[
\Pic(X \times_K S) \longrightarrow \Pic^+_{X/K}(S) \longrightarrow {\rm{H}}^2(S, f_*(\Gm)/\Gm).
\]
\end{proposition}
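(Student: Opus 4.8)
The plan is to obtain both sequences from the Leray spectral sequence of the projection $f\colon X \times_K S \to S$ with coefficients in $\Gm$ on the \'etale site. First I would use the identification $\Pic(X \times_K S) = {\rm{H}}^1(X \times_K S, \Gm)$ and write down the five-term exact sequence of low-degree terms,
\[
0 \to {\rm{H}}^1(S, f_*\Gm) \to {\rm{H}}^1(X \times_K S, \Gm) \to {\rm{H}}^0(S, R^1f_*\Gm) \xrightarrow{d_2} {\rm{H}}^2(S, f_*\Gm) \to {\rm{H}}^2(X \times_K S, \Gm).
\]
Truncating after the $E_2^{2,0}$-term reproduces the first sequence, provided I can identify the edge term ${\rm{H}}^0(S, R^1f_*\Gm)$ with $\Pic^+_{X/K}(S)$. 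For this I would note that $R^1f_*\Gm$ is, by definition, the \'etale sheafification on the small site $S_{\et}$ of the presheaf $(U \to S) \mapsto {\rm{H}}^1(X \times_K U, \Gm) = \Pic(X \times_K U)$, and that for $S$ smooth over $K$ this is precisely the restriction to $S_{\et}$ of the presheaf $(\Pic^+_{X/K})^{\rm{naive}}$ whose sheafification on the smooth-\'etale site of $\Spec(K)$ defines $\Pic^+_{X/K}$. Since the covers of $S$ in the smooth-\'etale site are exactly the \'etale covers, all of which lie in $S_{\et}$, the plus-construction computing sections over $S$ is the same on both sites, so ${\rm{H}}^0(S, R^1f_*\Gm) = \Pic^+_{X/K}(S)$. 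Functoriality in both $S$ and $X$ is then inherited from the functoriality of the Leray spectral sequence.

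For the second sequence, a choice of point in $X(S)$ yields a section $s = (\,\cdot\,, {\rm{id}}_S)\colon S \to X \times_K S$ of $f$. Restriction of units along $s$ gives a retraction $s^*\colon f_*\Gm \to \Gm$ of the unit $\Gm \to f_*\Gm$, hence a splitting $f_*\Gm \simeq \Gm \oplus (f_*\Gm/\Gm)$ on $S_{\et}$. In fact the adjunction unit for $s$ together with the relation $f \circ s = {\rm{id}}_S$ produces a morphism $Rf_*\Gm \to \Gm$ in the derived category $D(S_{\et})$ splitting the unit $\Gm \to Rf_*\Gm$, so that $Rf_*\Gm \simeq \Gm \oplus C$ with ${\rm{H}}^0(C) = f_*\Gm/\Gm$ and ${\rm{H}}^q(C) = R^qf_*\Gm$ for $q \geq 1$. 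The Leray (hypercohomology) spectral sequence then decomposes as the direct sum of the spectral sequence of $\Gm$ on $S$ --- which is concentrated in the row $q = 0$ and carries no differentials --- and that of $C$. Consequently the differential $d_2\colon E_2^{0,1} = \Pic^+_{X/K}(S) \to E_2^{2,0} = {\rm{H}}^2(S, \Gm) \oplus {\rm{H}}^2(S, f_*\Gm/\Gm)$ has trivial component in the ${\rm{H}}^2(S, \Gm)$-summand, i.e.\ it factors through the natural map ${\rm{H}}^2(S, f_*\Gm) \to {\rm{H}}^2(S, f_*\Gm/\Gm)$. Since the first sequence identifies the image of $\Pic(X \times_K S) \to \Pic^+_{X/K}(S)$ with $\ker(d_2)$, and killing the ${\rm{H}}^2(S, \Gm)$-component does not change this kernel, I obtain exactness of the second sequence at $\Pic^+_{X/K}(S)$.

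The step requiring the most care is the identification ${\rm{H}}^0(S, R^1f_*\Gm) = \Pic^+_{X/K}(S)$: one must check that \'etale sheafification on the big smooth-\'etale site, evaluated at $S$, agrees with sheafification of the restricted presheaf on $S_{\et}$, which is exactly where the hypothesis that $S$ be smooth enters, so that $S_{\et}$ embeds into the smooth-\'etale site and the relevant covers coincide. The remaining subtlety is the derived-category splitting of $Rf_*\Gm$ used for the second sequence; everything else is a formal consequence of the low-degree exactness and functoriality of the Leray spectral sequence.
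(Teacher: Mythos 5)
Your proposal is correct, and for the first exact sequence it is essentially the paper's argument: the paper also identifies $\Pic^+_{X/K}$ with $\R^1f_*(\Gm)$ (both being sheafifications of $T \mapsto \Pic(X \times T)$) and reads off the five-term sequence of the Leray spectral sequence; the paper simply works throughout on the big smooth-\'etale site of ${\rm{Spec}}(K)$, which sidesteps the small-site/big-site comparison that you correctly flag as the point needing care, and which you justify correctly (\'etale covers of objects of $S_{\et}$ lie in $S_{\et}$, so the sheafifications agree).

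For the second sequence your route genuinely differs from the paper's. You split $Rf_*\Gm \simeq \Gm \oplus C$ in the derived category using the section $s$ and observe that the $d_2$ differential out of $E_2^{0,1}$ has no component in the ${\rm{H}}^2(S,\Gm)$-summand, so that postcomposing $d_2$ with ${\rm{H}}^2(S, f_*\Gm) \to {\rm{H}}^2(S, f_*\Gm/\Gm)$ does not change its kernel. The paper instead makes the elementary observation that it suffices to know that ${\rm{H}}^2(S,\Gm) \to {\rm{H}}^2(X \times S, \Gm)$ is injective (which the section gives via the retraction): if $d_2(x)$ lies in the image of ${\rm{H}}^2(S,\Gm)$, then since ${\rm{im}}(d_2) = \ker\bigl({\rm{H}}^2(S,f_*\Gm) \to {\rm{H}}^2(X\times S,\Gm)\bigr)$ and the composite ${\rm{H}}^2(S,\Gm) \to {\rm{H}}^2(X\times S,\Gm)$ is injective, $d_2(x)$ must already vanish. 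Both arguments hinge on the same splitting induced by $s$; yours packages it once and for all at the level of $Rf_*\Gm$ and yields the slightly stronger statement that the entire spectral sequence decomposes, at the cost of invoking derived-category machinery, while the paper's is a short diagram chase in the five-term sequence. Either is a complete proof.
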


\begin{proof}
All terms in this proof are computed on the smooth-\'etale site of ${\rm{Spec}}(K)$. We first claim that $\Pic^+_{X/K} = \R^1f_*(\Gm)$. Indeed, both sides are the sheafification of the presheaf $T \mapsto \Pic(X \times T)$. The first assertion of the proposition now follows from the Leray spectral sequence (which is functorial in both arguments)
\[
E_2^{i,j} = {\rm{H}}^i(S, \R^jf_*(\Gm)) \Longrightarrow {\rm{H}}^{i+j}(X \times S, \Gm).
\]
For the second part, it is enough thanks to the spectral sequence to show that the map ${\rm{H}}^2(S, \Gm) \rightarrow {\rm{H}}^2(X \times S, \Gm)$ is injective, and this follows from the assumed existence of a section to $f$.
\end{proof}

\begin{corollary}
\label{smconnobst}
If $X$ is a normal $K$-scheme of finite type such that each component admits a $K$-point, then there is a commutative \'etale $K$-group scheme $E$ such that $E(K_s)$ is free of finite rank, and such that we have, for each smooth $K$-scheme $S$ an exact sequence, functorial in $X$ and $S$,
\[
\Pic(X \times_K S) \longrightarrow \Pic^+_{X/K}(S) \longrightarrow {\rm{H}}^2(S, E).
\]
\end{corollary}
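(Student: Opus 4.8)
The plan is to reduce to the case of connected $X$ and there identify the sheaf $f_*(\Gm)/\Gm$ appearing in Proposition \ref{lbobstacle} with the (\'etale) component group of the unit group scheme ${\rm{Units}}_{X/K}$ of Proposition \ref{repofunits}. First I would dispose of the reduction. Writing $X \times_K S = \coprod_i X_i \times_K S$ as $X_i$ ranges over the finitely many connected components of $X$, the groups $\Pic(X \times_K S)$ and $\Pic^+_{X/K}(S)$ decompose as products over $i$, and ${\rm{H}}^2(S,-)$ commutes with finite products of sheaves. Each $X_i$ is a connected normal scheme, hence integral, and carries a $K$-point by hypothesis. It therefore suffices to produce for each $X_i$ an \'etale $E_i$ with $E_i(K_s)$ free of finite rank together with a functorial exact sequence as claimed, and then to set $E := \prod_i E_i$ and take the product of the resulting sequences.

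So assume $X$ is integral with a $K$-point $x$. The composite $S \to \Spec(K) \xrightarrow{x} X$ shows $X(S) \neq \emptyset$, so Proposition \ref{lbobstacle} supplies the exact sequence
\[
\Pic(X \times_K S) \longrightarrow \Pic^+_{X/K}(S) \longrightarrow {\rm{H}}^2(S, f_*(\Gm)/\Gm).
\]
The heart of the argument is to show $f_*(\Gm)/\Gm$ is \'etale of the required form. By Proposition \ref{repofunits}(ii), on smooth test schemes $f_*\Gm$ agrees with ${\rm{Units}}_{X/K}$, and under this identification the subsheaf $\Gm \hookrightarrow f_*\Gm$ of units pulled back from $S$ corresponds to the inclusion $\R_{K/K}(\Gm) \hookrightarrow \R_{A/K}(\Gm) = {\rm{Units}}_{X/K}^0$ induced by $K \hookrightarrow A$, where $A \subset \Gamma(X, \calO_X)$ is the ring of elements algebraic over $K$. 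Here I would use the $K$-point: since $X$ is integral, $A$ is a finite field extension of $K$, and evaluation at $x$ gives a $K$-algebra homomorphism $A \to K$, which forces $A = K$. Hence ${\rm{Units}}_{X/K}^0 = \R_{K/K}(\Gm) = \Gm$ and the pulled-back copy of $\Gm$ is all of ${\rm{Units}}_{X/K}^0$, so that $f_*(\Gm)/\Gm \simeq {\rm{Units}}_{X/K}/{\rm{Units}}_{X/K}^0 =: E_i$ is the component group of ${\rm{Units}}_{X/K}$, which is \'etale with $E_i(K_s)$ free of finite rank by Proposition \ref{repofunits}.

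Assembling the components yields $E = \prod_i E_i$, which is the component group of ${\rm{Units}}_{X/K}$ and is \'etale with $E(K_s)$ free of finite rank; taking the product of the component sequences gives the desired exact sequence, its last term being ${\rm{H}}^2(S, E) = \prod_i {\rm{H}}^2(S, E_i)$. Functoriality in $S$ and $X$ is inherited from that of Proposition \ref{lbobstacle} together with the naturality of the identification $f_*\Gm \simeq {\rm{Units}}_{X/K}$. I expect the main obstacle to be the verification that $A = K$ and the careful matching of the subsheaf $\Gm \subset f_*\Gm$ with ${\rm{Units}}_{X/K}^0$; relatedly, the reduction to connected components is genuinely necessary, since for disconnected $X$ the copy of $\Gm$ inside $f_*\Gm$ is the diagonal and $f_*(\Gm)/\Gm$ retains a nontrivial torus part, so one cannot argue with the global quotient directly.
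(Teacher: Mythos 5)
Your proposal is correct and follows essentially the same route as the paper: reduce to connected (hence integral) $X$, invoke the second exact sequence of Proposition \ref{lbobstacle} (valid since the $K$-point gives $X(S)\neq\emptyset$), and identify $f_*(\Gm)/\Gm$ with the component group $E$ of ${\rm{Units}}_{X/K}$ via Proposition \ref{repofunits} after checking $A=K$ from the existence of a $K$-point on the integral $X$. The paper simply states ``we may assume $X$ is connected'' where you spell out the product decomposition and correctly observe why it is needed; otherwise the two arguments coincide.
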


\begin{proof}
We may assume that $X$ is connected. Let $f\colon X \rightarrow {\rm{Spec}}(K)$ be the structure map. By Proposition \ref{repofunits}, $f_*(\Gm)$ -- as a sheaf on the smooth-\'etale site of $K$ -- lives in an exact sequence
\[
0 \longrightarrow \R_{A/K}(\Gm) \longrightarrow f_*(\Gm) \longrightarrow E \longrightarrow 0,
\]
where $E$ is as in the corollary and $A \subset \Gamma(X, \calO_X)$ is the (finite) $K$-subalgebra of elements algebraic over $K$. By Proposition \ref{lbobstacle}, therefore, we only need to show that $A = K$. Because $X$ is reduced and connected, $A$ is a field, and because $X(K) \neq \emptyset$, $A = K$.
\end{proof}

\begin{lemma}
\label{H^i(Z, Q/Z)}
We have functorial isomorphisms, for every normal Noetherian scheme $S$, and every $i > 0$, ${\rm{H}}^i(S, \Q/\Z) \simeq {\rm{H}}^{i+1}(S, \Z)$.
\end{lemma}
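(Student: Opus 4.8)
The plan is to deduce the isomorphism from the long exact cohomology sequence attached to the short exact sequence of constant \'etale sheaves
\[
0 \longrightarrow \Z \longrightarrow \Q \longrightarrow \Q/\Z \longrightarrow 0,
\]
for which the only substantive input is the vanishing of the \'etale cohomology of the constant sheaf $\Q$ in positive degrees on a normal Noetherian scheme. Granting that vanishing, the relevant portion of the long exact sequence reads
\[
{\rm{H}}^i(S, \Q) \longrightarrow {\rm{H}}^i(S, \Q/\Z) \xrightarrow{\ \delta\ } {\rm{H}}^{i+1}(S, \Z) \longrightarrow {\rm{H}}^{i+1}(S, \Q),
\]
and for every $i > 0$ both outer terms vanish (since then $i$ and $i+1$ are both positive), so the connecting map $\delta$ is the desired isomorphism. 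Functoriality in $S$ is automatic: the displayed short exact sequence of constant sheaves is compatible with pullback, and the formation of the long exact sequence and its connecting maps is natural, so the isomorphisms $\delta$ are functorial for morphisms of normal Noetherian schemes.

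It therefore remains to prove that ${\rm{H}}^j(S, \Q) = 0$ for all $j > 0$ when $S$ is normal and Noetherian, and this is where normality is essential. First I would reduce to the case that $S$ is integral, since a normal Noetherian scheme is a finite disjoint union of normal integral schemes and cohomology splits along that decomposition. Let $\eta = {\rm{Spec}}(L)$ be the generic point and $j\colon \eta \hookrightarrow S$ the inclusion, and argue by Noetherian induction on $\dim S$. For the base case $\dim S = 0$ one has $S = {\rm{Spec}}(L)$, and ${\rm{H}}^j(L, \Q)$ is the Galois cohomology ${\rm{H}}^j({\rm{Gal}}(L_s/L), \Q)$, which vanishes for $j > 0$ because in positive degrees this group is at once torsion (being a filtered colimit of the cohomologies of finite quotient groups, each annihilated by the group order) and a $\Q$-vector space.

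For the inductive step I would analyze the Leray spectral sequence $E_2^{p,q} = {\rm{H}}^p(S, \R^q j_* \Q) \Rightarrow {\rm{H}}^{p+q}(\eta, \Q)$. Normality of $S$ means $S$ is geometrically unibranch: each strict henselization $\calO_{S,\bar{s}}^{\rm{sh}}$ is a normal local domain, so its generic fibre over $\eta$ is connected, and on passing to stalks the adjunction map $\Q_S \to j_*\Q_\eta$ is an isomorphism. The stalk of $\R^q j_* \Q$ at $\bar s$ is ${\rm{H}}^q$ of that generic fibre, which is a normal Noetherian scheme of dimension strictly less than $\dim S$; by the inductive hypothesis these stalks vanish for $q > 0$, so $\R^q j_* \Q = 0$. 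The spectral sequence then collapses to give ${\rm{H}}^j(S, \Q) \simeq {\rm{H}}^j(\eta, \Q)$, which vanishes for $j > 0$ by the base case.

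The main obstacle is precisely this vanishing statement for $\Q$-cohomology: it is the only place normality enters, and making the induction precise requires the geometric-unibranch identification $\Q_S \simeq j_*\Q_\eta$ together with the verification that the generic fibres of strict henselizations are again normal and of smaller dimension. Everything else — the long exact sequence, the identification of $\delta$ as an isomorphism, and functoriality — is then formal.
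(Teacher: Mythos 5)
Your argument is correct and follows essentially the same route as the paper: the paper's proof is a one-line deduction from the exact sequence $0 \to \Z \to \Q \to \Q/\Z \to 0$ together with a citation of \cite[Lem.\,5.2.9]{rostateduality} for the vanishing of ${\rm{H}}^j(S, \Q)$ in positive degrees on a normal (Noetherian) scheme, which is precisely the input you establish by hand. One simplification available in your inductive step: since $\calO_{S,\bar{s}}^{\rm{sh}}$ is a normal domain, its generic fibre over $\eta$ is the spectrum of a separable algebraic extension field of $L$, so no induction on dimension is actually needed --- the stalks of $\R^q j_*\Q$ are Galois cohomology groups with $\Q$-coefficients and vanish directly by your base-case argument.
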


\begin{proof}
This is a consequence of \cite[Lem.\,5.2.9]{rostateduality} and the exact sequence
\[
\pushQED{\qed} 
0 \longrightarrow \Z \longrightarrow \Q \longrightarrow \Q/\Z \longrightarrow 0. \qedhere
\popQED
\]
\phantom\qedhere
\end{proof}

The following proposition says that a map from a connected group into $\Pic^+_{X/K}$ arises from a line bundle after passing to an isogenous cover.

\begin{proposition}
\label{isogcover}
Let $K$ be a separably closed field, and let $X$ be a normal $K$-scheme of finite type each component of which admits a $K$-point. Then for any smooth connected $K$-group scheme $G$, and any homomorphism $\phi \colon G \rightarrow \Pic^+_{X/K}$ of sheaves on the smooth-\'etale site of ${\rm{Spec}}(K)$, there is a finite \'etale $K$-homomorphism $\psi\colon H \rightarrow G$ from a smooth connected $K$-group scheme $H$ such that $\phi\circ \psi$ arises from a line bundle on $X \times_K H$.
\end{proposition}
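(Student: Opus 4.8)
The plan is to run an obstruction-theoretic argument powered by Corollary \ref{smconnobst} and to annihilate the resulting cohomology class by an isogeny. Since $\Pic^+_{X/K}$ takes values in abelian groups, $\phi$ kills the derived subgroup and so factors through $G \twoheadrightarrow G/\mathscr{D}G$; if I can treat the induced map $G/\mathscr{D}G \to \Pic^+_{X/K}$, then, given an isogeny $\psi'\colon H' \to G/\mathscr{D}G$ that works, I take $H$ to be the identity component of $G \times_{G/\mathscr{D}G} H'$ and $\psi\colon H \to G$ the (finite étale) projection, pulling the line bundle back along $\mathrm{id}_X \times \mathrm{pr}_{H'}$. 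Thus I may assume $G$ is commutative. By Yoneda, $\phi$ is encoded by the universal class $\alpha := \phi(\mathrm{id}_G) \in \Pic^+_{X/K}(G)$, and since $\psi^\ast\alpha = (\phi\circ\psi)(\mathrm{id}_H)$, by functoriality in the test scheme it suffices to produce a connected finite étale isogeny $\psi\colon H \to G$ for which $\psi^\ast\alpha$ lies in the image of $\Pic(X \times_K H)$.

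Next I would apply Corollary \ref{smconnobst} with $S = G$ to obtain the obstruction $c := \mathrm{ob}(\alpha) \in {\rm{H}}^2(G, E)$, whose vanishing after base change is exactly the condition we want; here $E$ is étale with $E(K_s)$ free of finite rank, hence, $K$ being separably closed, $E \simeq \Z^n$ is constant. Because $\phi$ is a homomorphism, the universal class is additive, i.e. $m^\ast\alpha = \mathrm{pr}_1^\ast\alpha + \mathrm{pr}_2^\ast\alpha$ in $\Pic^+_{X/K}(G\times G)$; applying the functorial, additive obstruction map shows that $c$ is \emph{primitive}, $m^\ast c = \mathrm{pr}_1^\ast c + \mathrm{pr}_2^\ast c$. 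Since $G$ is smooth, hence normal Noetherian, Lemma \ref{H^i(Z, Q/Z)} gives a functorial identification ${\rm{H}}^2(G, \Z^n) \simeq {\rm{H}}^1(G, (\Q/\Z)^n)$ compatible with $m^\ast$ and the $\mathrm{pr}_i^\ast$; this target is a torsion group, so $c$ has finite order $N$ and defines a primitive class in ${\rm{H}}^1(G, A)$ with $A := (\Z/N)^n$ a finite constant étale group.

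The crux is to convert this primitive class into an isogeny. Here I invoke the classical dictionary between primitive (rigidified) $A$-torsors on a commutative group scheme and commutative extensions $0 \to A \to H \to G \to 0$ (see, e.g., \cite{dg}); the rigidification exists because $K$ is separably closed, and the associativity and symmetry needed to promote the primitivity isomorphism to a genuine group law follow from the connectedness of the self-products of $G$, exactly as in the proof of Lemma \ref{ExtbyZ}. As $A$ is finite étale, $\psi\colon H \to G$ is a finite étale isogeny; replacing $H$ by its identity component $H^0$ (which still maps onto the connected $G$ by a finite étale isogeny) we arrange $H$ connected. The torsor underlying $c$ acquires a tautological section after pullback along $\psi$, so $\psi^\ast c = 0$ in ${\rm{H}}^1(H, A)$, hence, by the functoriality of Lemma \ref{H^i(Z, Q/Z)}, also in ${\rm{H}}^2(H, E)$. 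Corollary \ref{smconnobst} with $S = H$ then forces $\psi^\ast\alpha = (\phi\circ\psi)(\mathrm{id}_H)$ into the image of $\Pic(X \times_K H)$, which is precisely the assertion.

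The main obstacle is this third step: passing from a class in ordinary étale cohomology to genuine group-scheme data. One must check that primitivity, which is all the homomorphism property directly provides, is exactly the condition that upgrades the $A$-torsor to an extension, and hence that the cover trivializing $c$ can be chosen to be an isogeny rather than a bare finite étale cover. Verifying the attendant coherence (and keeping the identifications of Lemma \ref{H^i(Z, Q/Z)}, the torsion bound, and the rigidification all compatible with the group structures) is where the real care is required.
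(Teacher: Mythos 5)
Your proposal is correct and follows essentially the same route as the paper: extract the obstruction class in ${\rm{H}}^2(G,E)$ via Corollary \ref{smconnobst}, convert it to a torsion class in ${\rm{H}}^1(G,\Q/\Z)^n$ via Lemma \ref{H^i(Z, Q/Z)}, use the homomorphism property to show it is primitive, upgrade the corresponding finite constant-group torsor to an isogeny (the paper cites the argument of Colliot-Th\'el\`ene's Th.\,4.12, resting on exactly the point you identify, that any map $G^3\rightarrow A$ to a finite \'etale group is constant by connectedness), and pass to the identity component. Your preliminary reduction to commutative $G$ is harmless but unnecessary, since the primitivity identity only uses that the target sheaf is abelian.
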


\begin{proof}
Combining Corollary \ref{smconnobst} and Lemma \ref{H^i(Z, Q/Z)}, the obstacle to a map $g\colon S \rightarrow \Pic^+_{X/K}$ from a finite type smooth $S/K$ arising from a line bundle on $X \times_K S$ is an element $\Delta_g \in {\rm{H}}^1(S, \Z^n)$ for some $n \geq 0$ (independent of $S$) which is associated in a functorial manner to $g$. Furthermore, this obstacle is additive in $g$: Given $g_i\colon S \rightarrow \Pic^+_{X/K}$, $i = 1,2$, the obstacle for $g_1+g_2\colon S \rightarrow \Pic^+_{X/K}$ is the sum of the obstacles for the $g_i$. 

We apply the above with $S = G \times G$, $g_i = \phi\circ\pi_i$ with $\pi_i\colon G^2 \rightarrow G$ the projections. The map $g_1+g_2$ coincides with $\phi\circ m$, where $m\colon G^2 \rightarrow G$ is addition. Thus we obtain
\[
m^*\Delta_\phi = \Delta_{\phi\circ m} = \Delta_{g_1+g_2} = \Delta_{g_1} + \Delta_{g_2} = \pi_1^*\Delta_{\phi} + \pi_2^*\Delta_\phi.
\]
That is, 
\begin{equation}
\label{noGainPicpfeqn1}
m^*\Delta_\phi = \pi_1^*\Delta_\phi + \pi_2^*\Delta_\phi.
\end{equation}
Now $\Delta_\phi \in {\rm{H}}^1(G, A) \subset {\rm{H}}^1(G, \Q/\Z)$ for some finite constant subgroup scheme $A \subset \Q/\Z$.

Let $h\colon T \rightarrow G$ be the $A$-torsor corresponding to $\Delta_\phi$. Note that pullback along $h$ kills $\Delta_\phi$, so the map $\phi\circ h\colon T \rightarrow \Pic^+_{X/K}$ arises from a line bundle on $X \times_K T$. Because $K$ is separably closed, ${\rm{H}}^1(K, A) = 0$, so there is a point $e \in T(K)$ lying over $1 \in G(K)$. We claim that equation (\ref{noGainPicpfeqn1}) implies that $T$ admits a $K$-group scheme structure with identity $e$ making $h$ into a homomorphism with kernel $A$. Indeed, this follows from the same argument as in the proof of \cite[Th.\,4.12]{colliot-thelene}. That result treats the case when $A$ is of multiplicative type instead of constant, and uses Rosenlicht's theorem that then any map $G^3 \rightarrow A$ sending identity to identity is a character. In fact, all that the argument really uses is that any map $G^3 \rightarrow S$ is a sum $\sum_{i=1}^3 \gamma_i\circ\pi_i$, with $\pi_i\colon G^3 \rightarrow G$ the projection and some $\gamma_i\colon G \rightarrow A$. This holds in our setting with $G$ connected and $A$ \'etale because in fact any map $G^3 \rightarrow A$ must then be constant.

So $T$ is a group scheme admitting a finite \'etale $K$-homomorphism to $G$. It follows that $T^0 \rightarrow G$ is also finite \'etale. Since $T^0 \rightarrow G$ naturally factors through the inclusion $T^0 \rightarrow T$, the composition $T^0 \rightarrow G \rightarrow \Pic^+_{X/K}$ is induced by a line bundle on $X \times_K T^0$.
\end{proof}

The following result says that, quite generally, $\Pic^+_{X/K}$ does not admit nontrivial maps from tori.

\begin{proposition}
\label{noG_minpic}
Let $X$ be a normal $K$-scheme of finite type such that each component of $X_{K_s}$ admits a $K_s$-point. Then there is no nonzero homomorphism $T \rightarrow \Pic^+_{X/K}$ with $T$ a $K$-torus.
\end{proposition}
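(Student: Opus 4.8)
The plan is to reduce to a single copy of $\Gm$ over a separably closed field, to realize the relevant homomorphism (after an isogeny) by an honest line bundle via Proposition \ref{isogcover}, and then to show that every line bundle on $X \times_K \Gm$ is pulled back from $X$, which forces the homomorphism to be constant and hence trivial.

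First I would carry out the reductions. A nonzero homomorphism $T \to \Pic^+_{X/K}$ remains nonzero after base change to $K_s$: since the restriction of the naive functor to smooth $K_s$-schemes is unchanged and \'etale sheafification commutes with the base change $\Spec(K_s) \to \Spec(K)$, one has $(\Pic^+_{X/K})_{K_s} \simeq \Pic^+_{X_{K_s}/K_s}$, and the hypothesis on $K_s$-points is preserved (as is normality). I may therefore assume $K$ is separably closed. Working component by component -- a connected normal scheme being integral, and each such component carrying a $K$-point -- I reduce to $X$ integral and normal with $X(K) \neq \emptyset$. Finally $T \simeq \Gm^r$ splits over $K$, and a homomorphism out of $\Gm^r$ into a commutative group sheaf is the sum of its restrictions to the coordinate subgroups, so it suffices to show that every homomorphism $\phi \colon \Gm \to \Pic^+_{X/K}$ vanishes.

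Next I would apply Proposition \ref{isogcover} with $G = \Gm$: it produces a finite \'etale homomorphism $\psi \colon H \to \Gm$ from a smooth connected $K$-group scheme $H$ such that $\phi \circ \psi$ arises from a line bundle $\mathscr{L}$ on $X \times_K H$. Since $\psi$ is a finite \'etale isogeny and $\dim H = 1$, the group $H$ is a one-dimensional torus over the separably closed field $K$, hence $H \simeq \Gm$ and $\psi = [n]$ for some $n$ prime to $p$ (the kernel $\mu_n$ being \'etale). As $[n] \colon \Gm \to \Gm$ is a surjective \'etale morphism, it is an epimorphism of sheaves on the smooth-\'etale site, so $\phi = 0$ as soon as $\chi := \phi \circ [n] = 0$. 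It therefore remains to prove that the homomorphism $\chi \colon \Gm \to \Pic^+_{X/K}$ arising from the line bundle $\mathscr{L}$ on $X \times_K \Gm$ is trivial.

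The key geometric input is that $p_X^* \colon \Pic(X) \to \Pic(X \times_K \Gm)$ is an isomorphism for $X$ integral and normal. Indeed, $X \times_K \Gm$ is again integral and normal, and the complement of $X \times_K \Gm$ in $X \times_K \A^1$ is the principal divisor $\{t = 0\}$; hence the restriction ${\rm{Cl}}(X \times_K \A^1) \to {\rm{Cl}}(X \times_K \Gm)$ is an isomorphism, while homotopy invariance of divisor class groups for normal schemes gives ${\rm{Cl}}(X) \xrightarrow{\sim} {\rm{Cl}}(X \times_K \A^1)$. Since $\Pic$ injects into ${\rm{Cl}}$ for integral normal schemes and restriction to $X \times_K \{1\}$ is a retraction of $p_X^*$, a straightforward diagram chase forces $p_X^*$ to be an isomorphism. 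Consequently $\mathscr{L} \simeq p_X^*(L_0)$ for some $L_0 \in \Pic(X)$. But then for every smooth $K$-scheme $S$ and every $g \in \Gm(S)$, the pullback $(\mathrm{id}_X, g)^*\mathscr{L} \simeq \mathrm{pr}_X^*(L_0)$ is independent of $g$; thus $\chi(S) \colon \Gm(S) \to \Pic^+_{X/K}(S)$ is a constant homomorphism, and is therefore zero. Hence $\chi = 0$ and so $\phi = 0$, as desired. The main obstacle is that $X$ is merely normal and non-proper, so one cannot invoke the seesaw principle or representability of $\Pic_{X/K}$ directly; this is precisely circumvented by passing to an honest line bundle through Proposition \ref{isogcover} and by replacing the Picard computation with homotopy invariance of divisor class groups.
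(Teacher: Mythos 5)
Your proposal is correct and follows essentially the same route as the paper: reduce to $T=\Gm$ over $K_s$, use Proposition \ref{isogcover} to replace $\phi$ by a map arising from an honest line bundle on $X\times_K\Gm$, and conclude from the fact that $\Pic(X)\xrightarrow{\sim}\Pic(X\times_K\Gm)$. The only difference is cosmetic: where the paper cites EGA IV$_4$, Cor.\,21.4.11 for that last isomorphism, you reprove it via homotopy invariance of divisor class groups, and you spell out why it suffices to kill $\phi\circ[n]$.
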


\begin{proof}
We may extend scalars to $K_s$ and may therefore assume that $T = \Gm$. Let $f\colon \Gm \rightarrow \Pic^+_{X/K}$ be a homomorphism. By Proposition \ref{isogcover}, there is an isogeny $\psi\colon H \rightarrow \Gm$ with $H$ smooth and connected such that $f\circ \psi$ arises from a line bundle. The $K$-group $H$ must also be isomorphic to $\Gm$, hence we are reduced to the case in which $f$ arises from a line bundle. But pullback along the projection induces an isomorphism $\Pic(X) \xrightarrow{\sim} \Pic(X \times_K \Gm)$ \cite[IV$_4$, Cor.\,21.4.11]{ega}, so $f$ is a constant map, hence $0$.
\end{proof}

\begin{proposition}
\label{noGainPic}
If $K$ is a perfect field, and $X$ is a normal $K$-scheme of finite type, then there is no nonzero homomorphism $\Ga \rightarrow \Pic^+_{X/K}$.
\end{proposition}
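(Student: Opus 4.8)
The plan is to mirror the proof of Proposition \ref{noG_minpic}, replacing the $\A^1$-invariance of $\Pic$ for $\Gm$ by the corresponding statement for $\Ga = \A^1$. First I would reduce to the case that $K$ is algebraically closed. Since $K$ is perfect, $K_s = \overline{K}$ and the extension $\overline{K}/K$ is separable; consequently $X_{\overline{K}}$ is again a normal finite type $\overline{K}$-scheme, the formation of $\Pic^+$ commutes with this base change (a smooth $\overline{K}$-scheme is a smooth $K$-scheme because $\overline{K}/K$ is separable), and each component of $X_{\overline{K}}$ acquires a rational point. Because $\Pic^+_{X/K}$ is an \'etale sheaf and $\overline{K}/K$ is a filtered union of finite separable extensions, the restriction map on $\Ga$-sections is injective, so a nonzero homomorphism $f\colon \Ga \to \Pic^+_{X/K}$ remains nonzero after base change to $\overline{K}$. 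We may therefore assume $K = \overline{K}$.

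Next, with $f\colon \Ga \to \Pic^+_{X/K}$ given, the hypotheses of Proposition \ref{isogcover} are now satisfied ($K$ is separably closed, $X$ is normal of finite type, and each component has a $K$-point), so there is a finite \'etale homomorphism $\psi\colon H \to \Ga$ from a smooth connected $K$-group scheme $H$ such that $f\circ\psi$ arises from a line bundle $\mathscr{M}$ on $X \times_K H$. I claim $H \simeq \Ga$. Indeed, $H$ is commutative (its derived subgroup is smooth connected and maps to $0$ in $\Ga$, hence lies in the finite group $\ker(\psi)$ and so is trivial), and it is smooth, connected, one-dimensional, and affine (as $\psi$ is finite and $\Ga$ is affine). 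Over the algebraically closed field $K$ such a group is either $\Ga$ or $\Gm$; but $\Hom(\Gm, \Ga) = 0$, so were $H \simeq \Gm$ then $\psi$ would be $0$, contradicting that it is an isogeny. Hence $H \simeq \Ga$. Note that in characteristic $p$ the isogeny $\psi$ may be a nontrivial Artin--Schreier cover, which is precisely why one must pass through $\psi$ rather than argue directly with $f$.

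Finally I would conclude as in Proposition \ref{noG_minpic}. Identifying $H \simeq \Ga$, the line bundle $\mathscr{M}$ lives on $X \times_K \Ga = X \times_K \A^1$, and pullback along the projection induces an isomorphism $\Pic(X) \xrightarrow{\sim} \Pic(X \times_K \A^1)$ by normality of $X$ (the $\A^1$-analogue of the $\Gm$-statement invoked in Proposition \ref{noG_minpic}). Thus $\mathscr{M} \simeq \mathrm{pr}_X^*\mathscr{M}_0$ for a line bundle $\mathscr{M}_0$ on $X$. Restricting along the identity section of $H$ and using that $f\circ\psi$ is a group homomorphism (so sends the identity to $0$) forces the class of $\mathscr{M}_0$ to vanish in $\Pic^+_{X/K}(K)$; hence the class of $f\circ\psi$ in $\Pic^+_{X/K}(H)$ equals $c^*[\mathscr{M}_0]$ for the structure map $c\colon H \to \Spec(K)$, which is $0$, so $f\circ\psi = 0$. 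Since $\psi$ is finite \'etale surjective, it is an epimorphism of sheaves on the smooth-\'etale site, and therefore $f = 0$.

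The main obstacle, and the reason the hypothesis that $K$ be perfect is essential, is twofold: perfectness is what allows the reduction to $\overline{K}$ while preserving normality, and it underlies the identification $H \simeq \Ga$. Over imperfect fields one-dimensional smooth connected unipotent groups can be nontrivial wound forms, and indeed $\Pic^+$ may then receive nonzero maps from unipotent groups, as Theorem \ref{formsofaffrep} shows; so no such argument can work there.
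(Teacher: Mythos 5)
Your proposal is correct and follows essentially the same route as the paper's own proof: reduce to $K = \overline{K}$, invoke Proposition \ref{isogcover} to replace $f$ by $f\circ\psi$ for an isogeny $\psi\colon H \to \Ga$ with $H \simeq \Ga$, and then use the $\A^1$-invariance of the Picard group of a normal scheme to conclude. The extra details you supply (why $H\simeq\Ga$, why $f\circ\psi=0$ forces $f=0$ via $\psi$ being an epimorphism of sheaves) are correct elaborations of steps the paper leaves implicit.
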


\begin{proof}
We may extend scalars and assume that $K = \overline{K}$. Let $f\colon \Ga \rightarrow \Pic^+_{X/K}$ be a homomorphism. By Proposition \ref{isogcover}, there is an isogeny $\psi\colon H \rightarrow \Ga$ with $H$ smooth and connected such that $f\circ\psi$ arises from a line bundle. Because $K$ is algebraically closed, $H \simeq \Ga$. Thus we are reduced to the case in which $f$ arises from a line bundle. But pullback induces an isomorphism $\Pic(X) \xrightarrow{\sim} \Pic(X \times_K \Ga)$ \cite[IV$_4$, Cor.\,21.4.11]{ega}, so $f$ is a constant map, hence $0$.
\end{proof}

\begin{remark}
Proposition \ref{noGainPic} fails in general even for smooth $X$ without the perfectness assumption on $K$. More precisely, for every imperfect field $K$, there is a smooth finite type $K$-scheme $X$ such that $\Pic^+_{X/K}$ is (represented by) a nontrivial split unipotent $K$-group scheme; see Example \ref{splitPic}.
\end{remark}

\begin{corollary}
\label{notorus}
Let $X$ be a normal $K$-scheme of finite type such that each component of $X_{K_s}$ admits a $K_s$-point, and let $\Pic^+_{X/K} \simeq G/E$ be a presentation of $\Pic^+_{X/K}$ as an almost-representable functor. Then $G^0$ is an extension of a smooth unipotent group scheme by an abelian variety. If $K$ is perfect, then $G^0$ is an abelian variety.
\end{corollary}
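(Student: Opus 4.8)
The plan is to reduce the corollary to two structural facts about the smooth connected commutative $K$-group $G^0$ --- that it contains no nontrivial subtorus, and, when $K$ is perfect, no copy of $\Ga$ --- and then to feed these into the structure theory of commutative algebraic groups. The genuinely new input is the vanishing of the torus part; everything else is an invocation of Chevalley's theorem (in the perfect case) or of its substitutes over imperfect fields.

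First I would record the mechanism that converts the non-existence statements of Propositions \ref{noG_minpic} and \ref{noGainPic} into statements about subgroups of $G^0$. Fix the presentation $\Pic^+_{X/K} \simeq G/E$ and let $q\colon G \to G/E$ be the quotient. Since $G^0$ is commutative its maximal torus $T$ is defined over $K$, so it suffices to kill $T$. Composing $T \hookrightarrow G^0 \hookrightarrow G \xrightarrow{q} \Pic^+_{X/K}$ and applying Proposition \ref{noG_minpic} (whose hypothesis on the components of $X_{K_s}$ is exactly our assumption) shows that this composite is the zero homomorphism, so the closed immersion $T \hookrightarrow G$ factors through a homomorphism $T \to E = \ker q$. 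As $E$ is \'etale and $T$ is connected, this homomorphism is trivial, hence so is the inclusion $T \hookrightarrow G$; injectivity then forces $T = 0$. Thus $G^0$ has no nontrivial subtorus, and (maximal tori commuting with base change) its maximal torus remains trivial after any field extension.

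If $K$ is perfect I would run the identical argument with $\Ga$ in place of $T$, invoking Proposition \ref{noGainPic}, to conclude that $G^0$ contains no subgroup isomorphic to $\Ga$. Chevalley's theorem then furnishes an exact sequence $0 \to L \to G^0 \to A \to 0$ with $A$ an abelian variety and $L$ smooth connected affine; over the perfect field $K$ the commutative group $L$ decomposes as its maximal torus times a unipotent group, the torus being trivial by the previous paragraph. Were the unipotent part nonzero it would contain a $K$-subgroup isomorphic to $\Ga$ (unipotent groups over perfect fields are split), contradicting the absence of $\Ga$-subgroups; hence $L = 0$ and $G^0 = A$ is an abelian variety, as claimed in the perfect case.

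The main obstacle lies entirely in the general case, where Chevalley's theorem is unavailable. Here I would feed "no subtorus" into the structure theory valid over arbitrary fields, via Totaro's theory of pseudo-abelian varieties: $G^0$ is an extension of a pseudo-abelian variety $P$ by its largest smooth connected affine subgroup $L$, triviality of the maximal torus forces $L$ to be unipotent, and $P$ is itself an extension of a smooth connected unipotent group by an abelian variety. Assembling the unipotent group $L$ together with the two layers of $P$ exhibits $G^0$ as built by iterated extensions from a single abelian variety and smooth connected unipotent groups, with no torus appearing, which is the asserted form. The delicacy --- and the reason the perfect case must be isolated --- is precisely that over an imperfect field one cannot split off the affine part, the abelian-variety layer need not lift to a genuine subgroup, and one is forced to rely on the classification of pseudo-abelian varieties in place of Chevalley's theorem; care is needed in packaging the unipotent contributions so as to land on the stated extension.
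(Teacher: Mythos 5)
Your reduction to the two non-existence statements is exactly the paper's: the composite $T \hookrightarrow G^0 \hookrightarrow G \to G/E \simeq \Pic^+_{X/K}$ vanishes by Proposition \ref{noG_minpic}, forces a factoring through the \'etale group $E$, and hence kills $T$; and your perfect-field argument (Chevalley, then torus trivial, then split unipotent contains $\Ga$, contradicting Proposition \ref{noGainPic}) is the proof in the text. The problem is the general case, and it is not merely a matter of ``packaging.'' Your decomposition $0 \to L \to G^0 \to P \to 0$, with $L$ the largest smooth connected affine (hence unipotent) subgroup and $P$ pseudo-abelian, followed by Totaro's $0 \to A \to P \to U \to 0$, produces a three-step filtration in which the abelian variety sits in the \emph{middle}. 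The corollary asserts that the abelian variety is a \emph{subgroup} of $G^0$ with unipotent quotient, and passing from your filtration to that form requires lifting $A$ from a subquotient to a subgroup. This is not formal: extensions of an abelian variety by a unipotent group need not split and need not admit any abelian subvariety mapping isogenously onto the quotient without further input (in characteristic $0$ the universal vector extension is the standard counterexample; it is only because characteristic $0$ falls under your perfect case that this does not sink the argument outright). In characteristic $p$ one can patch the gap by noting that $\Ext^1(A, L)$ is killed by a power of $p$ (commutative unipotent groups are $p^n$-torsion), pulling back along $[p^n]\colon A \to A$ to split the extension, and then quotienting $G^0$ by the resulting abelian subvariety --- but none of this is in your write-up, and it is precisely the missing step.

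The paper avoids the issue by running the structure theory in the opposite order: it invokes the ``anti-Chevalley'' theorem \cite[Th.\,A.3.9]{cgp}, which produces the \emph{smallest} normal subgroup of $G^0$ with affine quotient and shows it is semi-abelian, hence (once Proposition \ref{noG_minpic} kills its torus part) an abelian variety $A \subset G^0$ sitting at the bottom, with $L := G^0/A$ affine. It then uses \cite[Prop.\,C.4.5(2)]{cgp} to lift a putative maximal torus of $L$ back to $G^0$, concluding that $L$ is unipotent. If you want to keep your route through the largest affine subgroup and pseudo-abelian quotients, you must either supply the $p$-torsion $\Ext$ argument above or observe that Totaro's structure theorem for pseudo-abelian varieties is itself a consequence of the anti-Chevalley theorem --- at which point you may as well apply that theorem directly to $G^0$ as the paper does.
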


\begin{proof}
First suppose that $K$ is perfect. By Chevalley's theorem \cite{conradchevalley}, we may write $G^0$ as an extension
\[
0 \longrightarrow L \longrightarrow G^0 \longrightarrow A \longrightarrow 0
\]
with $L/K$ a connected linear algebraic group and $A/K$ an abelian variety. We must show that $L = 0$. Because $L$ contains no nontrivial torus by Proposition \ref{noG_minpic}, $L = U$ is unipotent \cite[IV, Cor.\,11.5(2)]{borel}. Because $K$ is perfect, every smooth connected unipotent $K$-group is split \cite[Cor.\,B.2.7, B.3.3]{cgp}, so we are done by Proposition \ref{isogcover}.

Now consider the imperfect case. By \cite[Th.\,A.3.9]{cgp} and Proposition \ref{noG_minpic}, we have an exact sequence
\[
1 \longrightarrow A \longrightarrow G^0 \longrightarrow L \longrightarrow 1
\]
with $A$ an abelian variety and $L$ linear algebraic. We must show that $L$ does not contain a nontrivial torus (and is therefore unipotent). This follows from the fact that $G^0$ does not contain a nontrivial torus and \cite[Prop.\,C.4.5(2)]{cgp}.
\end{proof}

We now prove Theorem \ref{represtpic}.

\begin{theorem}
\label{represtpicbody}$($Theorem $\ref{represtpic}$$)$
Let $K$ be a field and $X$ a regular $K$-scheme of finite type with dense smooth locus. Assume that any map from $X$ into an abelian variety $A$ factors through a finite subscheme of $A$. Then $\Pic^+_{X/K}$ is represented by a smooth commutative $K$-group scheme with finitely generated component group. Furthermore, $(\Pic^+_{X/K})^0$ is  unipotent.
\end{theorem}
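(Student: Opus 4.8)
The plan is to combine the almost-representability already established with the structure results of this section, and then to use the hypothesis to eliminate the two remaining obstructions to genuine representability: an abelian-variety part of the identity component, and the \'etale quotient $E$. By Theorem \ref{represtpicmainbody} and Lemma \ref{quotalmrep} we may write $\Pic^+_{X/K} \simeq G/E$ with $G$ a smooth commutative $K$-group scheme with finitely generated component group and $E \subset G^t$ \'etale with $E(K_s)$ free of finite rank. It suffices to prove that $G^0$ is unipotent and that $E = 0$; then $\Pic^+_{X/K} \simeq G$ is the desired scheme and $(\Pic^+_{X/K})^0 = G^0$ is unipotent. Since $X$ is regular it is normal, and since its smooth locus is dense, every component of $X_{K_s}$ contains a $K_s$-point (take one in the dense smooth locus); thus Corollary \ref{notorus} applies to $X$ and Proposition \ref{isogcover} applies to $X_{K_s}$. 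I also record that the hypothesis is inherited over $K_s$: a morphism $X_{K_s} \to B$ to an abelian variety over $K_s$ is defined over a finite separable $K'/K$ and corresponds, via Weil restriction, to a morphism $X \to \R_{K'/K}(B)$ to an abelian variety over $K$, so by hypothesis it has finite image.

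First I would show $G^0$ is unipotent. By Corollary \ref{notorus}, $G^0$ is an extension of a smooth unipotent group by an abelian variety $A$ (and $G^0 = A$ when $K$ is perfect), so it suffices to prove $A = 0$. Suppose $A \neq 0$. The composite $A \hookrightarrow G^0 \to G/E = \Pic^+_{X/K}$ is a homomorphism $\phi$ whose kernel is \'etale, hence a proper subgroup of the positive-dimensional $A$; so $\phi \neq 0$. Base-changing to $K_s$ and applying Proposition \ref{isogcover}, there is an isogeny $\psi\colon H \to A_{K_s}$ from an abelian variety $H$ such that $\phi \circ \psi$ arises from a line bundle $\mathscr{L}$ on $X_{K_s} \times_{K_s} H$.

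The heart of the argument is to transpose $\mathscr{L}$ and invoke the hypothesis. Because $H$ is proper, $\Pic_{H/K_s}$ is a scheme, and the family $\mathscr{L}$ defines a classifying morphism $c\colon X_{K_s} \to \Pic_{H/K_s}$, $x \mapsto [\mathscr{L}|_{H \times \{x\}}]$. Its image meets only finitely many connected components $\Pic^{\nu}_{H/K_s}$ (the N\'eron--Severi class being locally constant), and each such component is a torsor under $\Pic^0_{H/K_s} = H^{\vee}$, hence trivial, and so isomorphic to the abelian variety $H^{\vee}$, over the separably closed field $K_s$. Applying the inherited hypothesis therefore shows that $c$ has finite image; equivalently $M_x := \mathscr{L}|_{H \times \{x\}}$ is constant on each connected component of $X_{K_s}$. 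Choosing a line bundle $\mathscr{M}$ on $H \times X_{K_s}$ that on each component equals $p_H^*$ of the constant value of $M_x$ there, the bundle $\mathscr{L} \otimes \mathscr{M}^{-1}$ restricts trivially to every fiber $H \times \{x\}$, so by the seesaw principle (with proper factor $H$) it is pulled back from $X_{K_s}$. Thus $\mathscr{L} \cong \mathscr{M} \otimes p_{X_{K_s}}^*(N)$, whence $\mathscr{L}|_{X_{K_s} \times \{h\}} \cong N$ is independent of $h$; the homomorphism $\phi \circ \psi$ is therefore constant, hence zero, contradicting $\phi \neq 0$ (as $\psi$ is surjective). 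Hence $A = 0$ and $G^0$ is unipotent. \textbf{This transpose step is the main obstacle}, the delicate points being the identification of the components of $\Pic_{H/K_s}$ with $H^{\vee}$ over $K_s$ and the careful bookkeeping of $\mathscr{L}$ up to pullbacks from $X_{K_s}$.

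It remains to show $E = 0$. Since $G^0$ is smooth connected unipotent, $G^0(K_s)$ is a torsion group (in characteristic $p$ every element is $p$-power torsion, and in characteristic $0$ one has $G^0 = 0$ by the previous paragraph). As $E \subset G^t$ with $E(K_s)$ free of finite rank, the intersection $E(K_s) \cap G^0(K_s)$ is both free and torsion, hence zero, so the induced map $E(K_s) \hookrightarrow (G/G^0)(K_s)$ is injective with image in the torsion subgroup of the component group, again forcing $E(K_s) = 0$. Therefore $E = 0$, so $\Pic^+_{X/K} \simeq G$ is represented by a smooth commutative $K$-group scheme with finitely generated component group, and $(\Pic^+_{X/K})^0 = G^0$ is unipotent.
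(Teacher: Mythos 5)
Your proposal is correct and follows essentially the same route as the paper: reduce via Theorem \ref{represtpicmainbody}, Lemma \ref{quotalmrep} and Corollary \ref{notorus} to killing the abelian-variety part, pass to $K_s$ (using Weil restriction to transfer the hypothesis), apply Proposition \ref{isogcover} to realize the map by an actual line bundle, and then use the classifying ("transposed") map to the dual abelian variety together with the hypothesis and seesaw to show the line bundle splits; the paper streamlines your "main obstacle" step only by first normalizing $\mathscr{L}$ at a rational point $x_0$ so that the classifying map lands directly in $\Pic^0_{B/K}=\widehat{B}$, avoiding your component-by-component torsor trivialization. Your concluding argument that $E=0$ matches the paper's (with the characteristic-$0$ case handled via perfectness, as in Corollary \ref{notorus}).
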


\begin{proof}
Let $\Pic^+_{X/K} \simeq G/E$ be an almost-representable presentation of $\Pic^+_{X/K}$ (Theorem \ref{represtpicmainbody}) as in Lemma \ref{quotalmrep}. By Corollary \ref{notorus}, $G^0$ is an extension of a smooth unipotent group scheme by an abelian variety $A$. The key point is to show that $A = 0$. We claim that we may assume that $K$ is separably closed. The only unclear point is that for $X$ not to admit a map with infinite image into an abelian variety is preserved upon passage to $K_s$. For this, we simply note that, given a finite separable extension $L/K$, and a map $X_L \rightarrow B$ (with $B$ an abelian variety over $L$) with infinite image, the corresponding map $X \rightarrow \R_{L/K}(B)$ also has infinite image, where $\R_{L/K}$ denotes Weil restriction. Since $\R_{L/K}(B)$ is also an abelian variety, we may indeed assume that $K$ is separably closed.

We may also assume that $X$ is nonempty and connected, hence irreducible. We will show that there is no nonzero homomorphism $B \rightarrow G^0$ with $B$ an abelian variety, hence $A = 0$. Thanks to Proposition \ref{isogcover}, we may by passing to an isogenous cover of $B$ assume that the homomorphism $B \rightarrow G/E \simeq \Pic^+_{X/K}$ comes from a line bundle $\mathscr{L}$ on $X \times_K B$. Choose a point $x_0 \in X(K)$ (which exists because $X$ has nonempty smooth locus and $K$ is separably closed). We are free to modify $\mathscr{L}$ by a line bundle pulled back from $B$ and thereby assume that $\mathscr{L}|_{x_0} \simeq \calO_B$.

Now $\mathscr{L}$ defines a map $X \rightarrow \Pic_{B/K}$ which sends $x_0$ to $0$. Since $X$ is connected, therefore, it defines a map $X \rightarrow \widehat{B}$ to the dual abelian variety. By hypothesis, this map has finite image. Because $X$ is integral and $X(K) \neq \emptyset$, it is a constant map to a point $b \in B(K)$. This means that $\mathscr{L} \in \Pic(X) \oplus \Pic(B)$, so the map $B \rightarrow \Pic^+_{X/K}$ is constant, hence $0$. This in turn means that the map $h\colon B \rightarrow G$ factors through $E \subset G$. Because $B$ is connected and $E$ \'etale, it follows that $h = 0$, so we conclude that $A = 0$. By Corollary \ref{notorus}, it follows that $G^0$ is smooth and unipotent, and even trivial if $K$ is perfect. Thus $G^t$ is torsion. It follows that $E \subset G^t$ is trivial. Thus $\Pic^+_{X/K} \simeq G$ is a smooth $K$-group scheme with unipotent identity component.
\end{proof}

The situation is simplified when $X$ has no nonconstant units.

\begin{proposition}
\label{PicisnaiveforcertainX}
If $X$ is a normal $K$-scheme of finite type such that $X(K) \neq \emptyset$ and $X_{K_s}$ has no non-constant units, then for every smooth $K$-scheme $S$, $\Pic^+_{X/K}(S) = \Pic(X \times S)/\Pic(S)$.
\end{proposition}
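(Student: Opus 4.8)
The plan is to reduce everything to the single claim that $f_*\Gm = \Gm$ as sheaves on the smooth-\'etale site of $\Spec(K)$, where $f\colon X \rightarrow \Spec(K)$ is the structure map, and then to read off the description of $\Pic^+_{X/K}(S)$ from Proposition \ref{lbobstacle}. Observe first that the hypothesis on units forces $X_{K_s}$ to be connected: a nontrivial idempotent in $\Gamma(X_{K_s}, \calO_{X_{K_s}})$ would yield a non-constant unit. Moreover, since $X(K) \neq \emptyset$ we have $X(S) \neq \emptyset$ for every smooth $S$ (compose the structural morphism $S \rightarrow \Spec(K)$ with a $K$-point of $X$), so both exact sequences of Proposition \ref{lbobstacle} are available.

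To prove $f_*\Gm = \Gm$, I would pass to $K_s$ and apply Proposition \ref{repofunits} to the connected normal $K_s$-scheme $X_{K_s}$. That proposition identifies ${\rm{Units}}_{X_{K_s}/K_s}$ with the smooth $K_s$-group computing the units of $X_{K_s} \times_{K_s} T$ for smooth $T$; its identity component is $\R_{A_s/K_s}(\Gm)$, where $A_s \subset \Gamma(X_{K_s}, \calO)$ is the subring of elements algebraic over $K_s$, and its component group is free of finite rank, generated by honest units on $X_{K_s}$. The absence of non-constant units now does two things. First, it forces these generators to be constant, so the component group vanishes. Second, since $X_{K_s}$ is connected and normal, hence integral, $A_s$ is a finite field extension of $K_s$, and $A_s^\times = K_s^\times$ forces $A_s = K_s$. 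Hence ${\rm{Units}}_{X_{K_s}/K_s} = \R_{K_s/K_s}(\Gm) = \Gm$. Because the formation of ${\rm{Units}}_{X/K}$ commutes with base change to $K_s$ (by the uniqueness clause of Proposition \ref{repofunits}, applied to the subfunctor $({\rm{Units}}_{X/K})_{K_s}$ of $(f_{K_s})_*\Gm$), we obtain $({\rm{Units}}_{X/K})_{K_s} = \Gm$. As the canonical inclusion $\Gm \hookrightarrow {\rm{Units}}_{X/K}$ becomes an isomorphism over $K_s$, it is already an isomorphism over $K$. Thus ${\rm{Units}}_{X/K} = \Gm$, and since ${\rm{Units}}_{X/K}(T) = (f_*\Gm)(T)$ for all smooth $T$, we conclude $f_*\Gm = \Gm$ on the smooth-\'etale site.

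With this in hand the conclusion is formal. The quotient sheaf $f_*\Gm/\Gm$ vanishes, so the second sequence of Proposition \ref{lbobstacle} reads $\Pic(X \times_K S) \rightarrow \Pic^+_{X/K}(S) \rightarrow {\rm{H}}^2(S, 0) = 0$, giving surjectivity of the natural map. The first sequence becomes $0 \rightarrow {\rm{H}}^1(S, \Gm) \rightarrow \Pic(X \times_K S) \rightarrow \Pic^+_{X/K}(S)$, whose left-hand edge map is the pullback $f^*\colon \Pic(S) = {\rm{H}}^1(S, \Gm) \rightarrow \Pic(X \times_K S)$; this is injective because the section of $f$ arising from $X(S) \neq \emptyset$ splits it. Therefore the kernel of $\Pic(X \times_K S) \rightarrow \Pic^+_{X/K}(S)$ is exactly $f^*\Pic(S) \simeq \Pic(S)$, yielding $\Pic^+_{X/K}(S) = \Pic(X \times_K S)/\Pic(S)$. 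The only genuine work lies in establishing $f_*\Gm = \Gm$, that is, the descent step of the second paragraph; everything else is a direct application of Proposition \ref{lbobstacle}.
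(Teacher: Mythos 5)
Your argument is correct and follows exactly the paper's own (very terse) proof: use Proposition \ref{repofunits} and the hypotheses to deduce $f_*\Gm = \Gm$ on the smooth-\'etale site, then read off both exactness statements from Proposition \ref{lbobstacle}. You simply supply the details (connectedness of $X_{K_s}$, triviality of the component group and of the algebraic subring $A$, descent of the identification from $K_s$ to $K$) that the paper leaves implicit.
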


\begin{proof}
The assumptions and Proposition \ref{repofunits} imply that $f_*(\Gm) = \Gm$. The proposition therefore follows from Proposition \ref{lbobstacle}.
\end{proof}

\begin{proposition}
\label{stronglywd}
If $X$ is a regular $K$-scheme of finite type with dense smooth locus satisfying the property that each connected component of $X_{K_s}$ admits no non-constant units, then there is no non-constant map from a smooth unirational $K$-scheme to $\Pic^+_{X/K}$. In particular, there is no such map to $G$, where $G/E$ is a presentation of $\Pic^+_{X/K}$ as an almost-representable functor.
\end{proposition}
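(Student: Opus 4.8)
The plan is to reduce everything to the single computation that $\Pic^+_{X/K}$ receives no nonconstant map from a connected open subscheme of $\P^1$, and then to propagate this to all unirational $V$ by a separatedness argument carried out on the honest group scheme $G$ rather than on the (possibly non-separated) quotient $G/E$. First I would make the standard reductions. A section of a sheaf over $V$ is determined by its restriction to $V_{K_s}$, and the formation of $\Pic^+$ is unaffected by passing to $K_s$ (the naive functor agrees on $K_s$-schemes and \'etale sheafification is compatible with the restriction of sites), so by Galois descent I may assume $K = K_s$; since $\Pic^+$ of a disjoint union is the product of the $\Pic^+$ of the components and a map to a product is constant iff each factor is, I may also assume $X$ and $V$ connected. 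Then $X$ is regular and connected with a rational point (dense smooth locus over a separably closed field) and no nonconstant units, so Proposition \ref{PicisnaiveforcertainX} gives $\Pic^+_{X/K}(S) = \Pic(X\times_K S)/\Pic(S)$ for every smooth $S$. I fix a presentation $\Pic^+_{X/K} \simeq G/E$ as in Theorem \ref{almostreprestpicbody}, so that over $K = K_s$ the \'etale group $E$ is the constant group $\Z^r$.

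The key input is the case of a connected open $C \subseteq \P^1_K$. Homotopy invariance \cite[IV$_4$, Cor.\,21.4.11]{ega} gives $\Pic(X\times_K \A^1) = \Pic(X)$, and deleting fibers $X\times\{a\}$ (which are principal divisors on $X\times_K\A^1$) changes nothing, so $\Pic(X\times_K C) = \Pic(X)\oplus\Pic(C)$ with both summands arising by pullback. Hence $\Pic^+_{X/K}(C) = \Pic(X) = \Pic^+_{X/K}(K)$, and every map $C \to \Pic^+_{X/K}$ is constant. Composing a map $C \to G^0$ with $G^0 \hookrightarrow G \to G/E$ and noting its image then lands in a single coset of the \'etale group $E$, the connectedness of $C$ shows that already every map $C \to G^0$ is constant.

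Next I would upgrade this to: every morphism from a unirational $V$ to $G^0$ is constant. Since $G^0$ is a group scheme over a field it is separated, which lets me argue by dominance. Connecting two general $K$-points of a connected open $W \subseteq \A^n$ by an axis-parallel broken line whose segments are connected opens of $\P^1$ contained in $W$, the previous step shows that any $h\colon W \to G^0$ is constant on a dense set of $K$-points, hence constant because $G^0$ is separated. Unirationality of $V$ supplies a dominant morphism $\pi\colon W \to V$ with $W$ a dense open of some $\A^n$; for $h\colon V \to G^0$ the composite $h\circ\pi$ is then constant, and since $\pi$ is dominant and $G^0$ is separated, $h$ is constant.

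Finally, for an arbitrary $g\colon V \to \Pic^+_{X/K} = G/E$, I would lift $g$ to $G$: the fiber product $V\times_{G/E} G \to V$ is a torsor under $E \simeq \Z^r$, classified by an element of ${\rm{H}}^1(V,\Z)^r$, which vanishes because $V$ is normal and connected (as used in Lemma \ref{ExtbyZ}); this produces $\tilde g\colon V \to G$. Translating by the constant $\tilde g(v_0)$ for a point $v_0 \in V(K)$ carries $\tilde g$ into $G^0$ while changing $g$ only by an additive constant, and the preceding step forces this translate to be constant, so $g$ is constant. The ``in particular'' clause follows at once: a nonconstant $V \to G$ would remain nonconstant after composition with $G \to G/E$ (otherwise its image lies in a coset of the \'etale $E$, whence constancy by connectedness of $V$), contradicting what was just shown. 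The one genuinely delicate point is this final lifting: the quotient $G/E$ need not be separated — precisely the failure of representability caused by points of infinite order — so the dominance and rigidity arguments cannot be run on $\Pic^+_{X/K}$ directly and must be transported to the honest, separated group scheme $G$, the transport being possible exactly because ${\rm{H}}^1(V,\Z)$ vanishes for normal connected $V$.
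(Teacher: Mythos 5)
Your proposal is correct and follows essentially the same route as the paper: the core computation is identical, namely that Proposition \ref{PicisnaiveforcertainX} plus homotopy invariance \cite[IV$_4$, Cor.\,21.4.11]{ega} forces every section of $\Pic^+_{X/K}$ over a connected open of $\P^1$ to be constant. The additional machinery you supply --- lifting a map $V \to G/E$ to $G$ via the vanishing of ${\rm{H}}^1(V,\Z)$ for normal connected $V$, translating into $G^0$, and then propagating constancy from lines to all of a dense open of $\A^n$ and down along a dominant map using separatedness of $G^0$ --- is a careful justification of the reduction that the paper compresses into the single sentence ``We may assume that $S$ is open in $\P^1$,'' and it is a legitimate (indeed welcome) filling-in of that step rather than a different argument.
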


\begin{proof}
We may assume that $K = K_s$ and that $X$ is nonempty and connected. Because $X$ has dense smooth locus, $X(K) \neq \emptyset$, so Proposition \ref{PicisnaiveforcertainX} implies that $\Pic(X \times S) \twoheadrightarrow \Pic^+_{X/K}(S)$ is surjective for every smooth $S/K$. We need to show that there are no nonconstant maps to $\Pic^+_{X/K}$ for $S$ unirational. We may assume that $S$ is open in $\P^1$. Then every element of $\Pic^+_{X/K}(S)$ comes from $\Pic(X \times S) = \Pic(X)$ \cite[IV$_4$Cor.\,21.4.11]{ega}. Thus, any map $S \rightarrow \Pic^+_{X/K}$ is constant.
\end{proof}

Finally, we treat $\Pic^+_{X/K}$ when $X$ is a form of affine space (Theorem \ref{formsofaffrep}).

\begin{theorem}
\label{formsofaffrepbody}$($Theorem $\ref{formsofaffrep}$$)$
If $K$ is a field and $X$ is a $K$-form of $\A^n_K$, then $\Pic^+_{X/K}$ is represented by a smooth commutative unipotent $K$-group scheme with strongly wound identity component. If $X(K) \neq \emptyset$, then, for every smooth $K$-scheme $S$, $\Pic^+_{X/K}(S) = \Pic(X \times S)/\Pic(S)$.
\end{theorem}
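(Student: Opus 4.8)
The goal is to prove Theorem \ref{formsofaffrepbody} for a $K$-form $X$ of $\A^n_K$. The proof will assemble the structural results already established. First I would observe that $X$ is smooth (being a form of $\A^n$, it becomes smooth after a finite field extension, and smoothness descends), hence in particular regular and of finite type with dense smooth locus, so Theorem \ref{represtpic} (equivalently Theorem \ref{represtpicbody}) applies once its hypothesis is verified. The crucial hypothesis is that any map from $X$ to an abelian variety $A$ factors through a finite subscheme of $A$. To check this I would pass to $K_s$: since $X_{K_s} \simeq \A^n_{K_s}$ is rationally connected (indeed rational), and abelian varieties admit no nonconstant maps from $\P^1$ by \cite[Prop.\,6.1]{milne}, any map $X_{K_s} \to A_{K_s}$ is constant. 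As in the proof of Theorem \ref{represtpicbody}, the condition descends from $K_s$ to $K$ via Weil restriction (a nonconstant map $X_L \to B$ would yield a nonconstant map $X \to \R_{L/K}(B)$ into an abelian variety). Thus Theorem \ref{represtpic} gives that $\Pic^+_{X/K}$ is represented by a smooth commutative $K$-group scheme with finitely generated component group and unipotent identity component.

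Next I would upgrade ``unipotent identity component'' to ``strongly wound identity component.'' A form of affine space has no nonconstant units after base change to $K_s$: indeed $\Gamma(\A^n_{K_s}, \calO^{\times}) = K_s^{\times}$, so every unit on $X_{K_s}$ is constant. Therefore Proposition \ref{stronglywd} applies (once we note $X$ has dense smooth locus, as established above), and it tells us there is no nonconstant map from a smooth unirational $K$-scheme to $\Pic^+_{X/K}$, in particular no nonconstant map to the representing group $G$. The identity component $G^0$ is smooth connected unipotent; if it were not strongly wound it would, by definition, admit a nontrivial unirational $K$-subgroup, equivalently a nonconstant $K$-morphism from an open subscheme of $\P^1_K$ into $G^0 \subset G$. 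This directly contradicts Proposition \ref{stronglywd}. Hence $G^0$ is strongly wound, establishing the first assertion.

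For the final assertion, under the hypothesis $X(K) \neq \emptyset$ I would invoke Proposition \ref{PicisnaiveforcertainX}. To apply it I need: $X$ normal of finite type (clear, being smooth), $X(K) \neq \emptyset$ (assumed), and $X_{K_s}$ having no nonconstant units (verified above, since $X_{K_s} \simeq \A^n_{K_s}$). The proposition then yields $\Pic^+_{X/K}(S) = \Pic(X \times S)/\Pic(S)$ for every smooth $K$-scheme $S$, which is exactly what is claimed.

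The main obstacle I anticipate is largely bookkeeping rather than a deep difficulty: one must be careful that all the hypotheses of the cited results (normality, dense smooth locus, the abelian-variety factorization condition, absence of nonconstant units over $K_s$) are genuinely met by a form of affine space, and that the descent of the factorization condition from $K_s$ to $K$ is handled correctly via Weil restriction. The only step requiring a genuine idea is recognizing that ``strongly wound'' is precisely the negation of ``admits a nonconstant map from an open subscheme of $\P^1$,'' so that Proposition \ref{stronglywd} delivers exactly the windedness conclusion; everything else follows by collecting the previously proved structural theorems.
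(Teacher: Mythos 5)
Your outline follows the paper's proof for most of the statement: the second assertion via Proposition \ref{PicisnaiveforcertainX}, representability via Theorem \ref{represtpicbody} (using that $\A^n$ admits no nonconstant map to an abelian variety), and strong woundness of the identity component via Proposition \ref{stronglywd}. However, there is a genuine gap: the theorem asserts that $\Pic^+_{X/K}$ is represented by a smooth commutative \emph{unipotent} $K$-group scheme, whereas what you have established is only that the \emph{identity component} is unipotent and that the component group is finitely generated. A finitely generated component group could a priori contain a copy of $\Z$ or prime-to-$p$ torsion, and then the full group would not be unipotent. The paper closes this by showing that the component group is $p$-power torsion; it suffices to show $\Pic^+_{X/K}(K_s) = \Pic(X_{K_s})$ is $p$-power torsion, which follows from the fact that $\Pic(X_L) = \Pic(\A^n_L) = 0$ for some finite extension $L/K$ together with \cite[Lem.\,3]{rosaffinenesscrit}. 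Without this step the first assertion of the theorem is not proved.

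A secondary inaccuracy: you assert $X_{K_s} \simeq \A^n_{K_s}$ in order to verify both the absence of nonconstant units and the abelian-variety hypothesis. A form of $\A^n$ is only guaranteed to split over some finite extension $L/K$, which may be purely inseparable (this is exactly the interesting case, e.g.\ wound unipotent groups), so $X_{K_s}$ need not be affine space. Both verifications go through after replacing $K_s$ by $\overline{K}$: a nonconstant unit or a nonconstant map to an abelian variety over $K_s$ would remain nonconstant over $\overline{K}$, where $X_{\overline{K}} \simeq \A^n_{\overline{K}}$, giving the contradiction. This is easily repaired but should be stated correctly.
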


\begin{proof}
The second statement of the theorem follows from Proposition \ref{PicisnaiveforcertainX}, because $\A^n$ has no non-constant units over a field. Because $\A^n$ has no non-constant map to an abelian variety, Theorem \ref{represtpicbody} implies that $\Pic^+_{X/K}$ is represented by a smooth $K$-group scheme with finitely-generated component group. Proposition \ref{stronglywd} implies that $(\Pic^+_{X/K})^0$ is strongly wound, so it only remains to show that the component group of $\Pic^+_{X/K}$ is $p$-power torsion, for which it suffices to show the same for $\Pic^+_{X/K}(K_s) = \Pic(X_{K_s})$. For this we may assume that $K = K_s$. We have that $\Pic(X_L) = \Pic(\A^n_L) = 0$ for some finite extension $L/K$, so the assertion follows from \cite[Lem.\,3]{rosaffinenesscrit}.
\end{proof}

\begin{example}
\label{picnotnaive}
The conclusion that $\Pic^+_{X/K}(S) = \Pic(X \times S)/\Pic(S)$ in Theorem \ref{formsofaffrepbody} really does require the assumption that $X(K) \neq \emptyset$ in general, even in the case $S = {\rm{Spec}}(K)$, as the following example shows. (This also provides an example of a point of $\Pic^+_{X/K}(K)$ not arising from a line bundle without passing to an \'etale cover.) Denote by $f\colon X \rightarrow {\rm{Spec}}(K)$ the structure map. The spectral sequence
\[
E_2^{i,j} = {\rm{H}}^i(K, \R^jf_*(\Gm)) \Longrightarrow {\rm{H}}^{i+j}(X, \Gm)
\]
shows that the map $\Pic(X) \rightarrow \Pic^+_{X/K}(K)$ is surjective if and only if the pullback map ${\rm{H}}^2(K, \Gm) \rightarrow {\rm{H}}^2(X, \Gm)$ is injective. We will construct for any field $K$ of characteristic $p$ with $\Br(K)[p] \neq 0$ a $K$-form $X$ of $\A^1$ for which this injectivity fails.

By \cite[Prop.\,2.6.4]{rostateduality}, for any regular ring $A$ of characteristic $p$, there is a functorial (in $A$) surjective group homomorphism
\begin{equation}
\label{picnotnaiveeqn1}
\phi_A\colon \frac{\Omega^1_A}{dA + (C^{-1}-i)(\Omega^1_A)} \twoheadrightarrow {\rm{H}}^2(A, \Gm)[p],
\end{equation}
where $d\colon A \rightarrow \Omega^1_A$ is the universal derivation, $C^{-1}\colon \Omega^1_A \rightarrow \Omega^1/dA$ is the inverse Cartier map defined by $C^{-1}(fdg) = f^pg^{p-1}dg$ (and extended by linearity) and $i\colon \Omega^1_A \rightarrow \Omega^1_A/dA$ is the projection. By assumption, ${\rm{H}}^2(K, \Gm)[p] \neq 0$, so it follows that there exist $\lambda, t \in K$ such that $\phi_k(\lambda dt) \neq 0 \in \Br(K)$. We will construct a $K$-form $X$ of $\A^1$ such that $\phi_K(\lambda dt)$ pulls back to $0 \in {\rm{H}}^2(X, \Gm)$.

We define
\[
X := \{Y^p + \lambda = t^{p-1}Z^p - Z\} \subset \A^2_K.
\]
First we check that $X$ is indeed a form of $\A^1$. Over $K(t^{1/p})$, one has the inverse isomorphisms
\[
\begin{tikzcd}
X_{K(t^{1/p})} \arrow[r, shift left] & \A^1_{K(t^{1/p})} \arrow[l, shift left] \\
(Y, Z) \arrow[r, mapsto] & Y - t^{(p-1)/p}Z \\
(S-t^{(p-1)/p}S^p-t^{(p-1)/p}\lambda, -S^p-\lambda) & S \arrow[l, mapsto]
\end{tikzcd}
\]
Now we check that $\phi_K(\lambda dt) \mapsto 0 \in {\rm{H}}^2(X, \Gm)$. Indeed, if $B$ denotes the coordinate ring of $X$, then we have an equality in $\Omega^1_B/dB$,
\[
(C^{-1}-i)(Zdt) = (Z^pt^{p-1}-Z)dt = (Y^p+\lambda)dt = d(Y^pt) + \lambda dt.
\]
so $\lambda dt \in (C^{-1}-i)(\Omega^1_B)$ as an element of $\Omega^1_B/dB$. By (\ref{picnotnaiveeqn1}), therefore, we see that $f^*(\phi_K(\lambda dt)) = \phi_B(\lambda dt) = 0$.
\end{example}

\begin{example}
\label{splitPic}
Let $K$ be an imperfect field, and let $U$ be a smooth connected unipotent $K$-group with $\Pic^+_{U/K}$ of strictly positive dimension. (Equivalently, since $\Pic^+_{U/K}$ is of finite type by Theorem \ref{formsofaffrepbody}, $\Pic(U_{K_s})$ is infinite.) Such examples exist over every imperfect field; see \cite[Th.\,6.7.9]{kmt} and \cite[Props.\,5.2, 5.4, 5.5, Ex.\,5.8]{rostrans}. There is a finite \'etale closed $K$-subgroup scheme $E \subset \Pic^+_{U/K}$ such that $\Pic^+_{U/K}/E$ is (nontrivial) split unipotent. Indeed, \cite[Lem.\,2.1.5]{rostateduality} implies this for the identity component of $\Pic^+_{U/K}$, and we can then further quotient by a finite \'etale $K$-subgroup of the torsion group $\Pic^+_{U/K}$ that surjects onto its component group.

By Theorem \ref{formsofaffrepbody}, the inclusion $E \subset \Pic^+_{U/K}$ comes from a line bundle $\mathscr{L} \in \Pic(U \times E)$. Let $F/K$ be a finite Galois extension splitting $E$. For each point $e \in E(F)$, $\mathscr{L}_e := \mathscr{L}|U \times \{e\}$ is represented by an effective divisor $D_e$ on $U$. Let $D$ be the sum of the Galois conjugates of $\sum_{e \in E(F)} D_e$. Then $D$ is a divisor on $U$ such that $\mathscr{L}|(U-D) \times E$ is trivial. Therefore, the composition $E \subset \Pic^+_{U/K} \rightarrow \Pic^+_{(U-D)/K}$ vanishes. We claim that the map $\Pic^+_{U/K} \rightarrow \Pic^+_{(U-D)/K}$ is surjective with kernel of strictly smaller dimension than ${\rm{dim}}(\Pic^+_{U/K})$ (in fact, the kernel is finite, but we will not require this), from which it will follow that $\Pic^+_{(U-D)/K}$ is a nontrivial quotient of the {\em split} unipotent $K$-group $\Pic^+_{U/K}/E$. We will have therefore constructed over every imperfect field $K$ a smooth connected $K$-scheme whose restricted Picard scheme is nontrivial split unipotent.

To prove the desired surjectivity, it suffices because $\Pic^+_{(U-D)/K}$ is smooth to prove surjectivity on $K_s$-points. Because all $K_s$-points on both schemes come from line bundles, this follows from the surjectivity of $\Pic(U_{K_s}) \rightarrow \Pic((U-D)_{K_s})$, which holds because both schemes are regular and the latter is open in the former. As for the claim about the kernel, if the kernel had dimension equal to that of $\Pic^+_{U/K}$, then it would contain all of $(\Pic^+_{U/K})^0$ and therefore have infinitely many $K_s$-points. But the kernel of $\Pic(U_{K_s}) \rightarrow \Pic((U-D)_{K_s})$ is generated by the line bundles associated to the finitely many closed points of $(U-D)_{K_s}$. Since it is also torsion, because $\Pic(U_{K_s})$ is, it must be finite.
\end{example}

As a consequence of Theorem \ref{formsofaffrepbody}, we will deduce that, over a separably closed field, infinitude of the Picard group of a unipotent group is equivalent to infinitude of the translation-invariant line bundles. For this we require the following lemma.

\begin{lemma}
\label{uniponunip}
Let $U, V$ be smooth unipotent $K$-groups equipped with a $U$-action on $V$ $($as an algebraic group$)$.
\begin{itemize}
\item[(i)] If $V \neq 1$, then there is a nontrivial smooth $K$-subgroup of $V$ on which $U$ acts trivially.
\item[(ii)] If ${\rm{dim}}(V) > 0$, then there is a strictly positive-dimensional smooth $K$-subgroup of $V$ on which $U$ acts trivially.
\end{itemize}
\end{lemma}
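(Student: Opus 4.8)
The plan is to reduce (i) to (ii) together with an easy finite case, and to prove (ii) by building up a fixed subgroup through a sequence of reductions. Throughout, the guiding principle — and the main subtlety — is that over an imperfect $K$ a positive-dimensional subgroup scheme can have trivial maximal smooth subgroup, so one cannot simply pass to $\overline{K}$, form the fixed-point scheme, and take its reduction; every subgroup I produce must instead be manifestly smooth. The two devices that guarantee this are: (a) images of homomorphisms of smooth groups (such as $p^iV$, or $\operatorname{im}(N^k)$ for an endomorphism $N$) are smooth, being quotients of smooth groups; and (b) the terms of the lower central series of a smooth connected group are smooth and connected over any field. I am, however, free to extend scalars to $K_s$, since formation of the maximal smooth subgroup commutes with separable field extension \cite[Lem.\,C.4.1]{cgp}; thus it suffices to exhibit a positive-dimensional smooth subgroup of $V_{K_s}$ fixed by $U_{K_s}$.

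After base change to $K_s$, I would first replace $V$ by a succession of characteristic (hence $U$-stable) smooth connected subgroups: its identity component, then the last nonzero term of its lower central series (now commutative), then the last nonzero term of $V \supseteq pV \supseteq p^2V \supseteq \cdots$ (now killed by $p$); each is smooth and positive-dimensional. Writing $U^0$ for the identity component of $U$, I treat it by induction on $\dim U^0$: using the derived subgroup $\mathscr{D}U^0$ (smooth connected of strictly smaller dimension unless $U^0$ is commutative, applying the inductive hypothesis to $\mathscr{D}U^0$ and then letting the commutative quotient act on the resulting fixed subgroup) I reduce to commutative $U^0$. There I form the semidirect product $G := V \rtimes U^0$, which is smooth connected unipotent, hence nilpotent \cite{borel}; since $V$ and $U^0$ are commutative, $[G,G] \subseteq V$, so the last nonzero term of the lower central series of $G$ is a smooth connected positive-dimensional subgroup of $V$ that is central in $G$, and therefore fixed by $U^0$.

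Next I replace $V$ by $((V^{U^0})^{\mathrm{sm}})^0$ — smooth, connected, positive-dimensional by the previous paragraph, and $U$-stable since $U^0$ is normal — leaving an action of the constant finite $p$-group $\Gamma := \pi_0(U)$ on a smooth connected commutative $p$-torsion group. Here I induct on $|\Gamma|$: I choose $\sigma \in Z(\Gamma)$ of order $p$ and set $N := \sigma - 1 \in \operatorname{End}(V)$, which is nilpotent because $V$ is killed by $p$. Taking $k$ maximal with $N^k \neq 0$, the subgroup $\operatorname{im}(N^k)$ is smooth, connected, positive-dimensional, contained in $\ker N = V^{\sigma}$, and stable under $\Gamma$ (as $\sigma$ is central, every element of $\Gamma$ commutes with $N$); I then apply the inductive hypothesis to the induced action of $\Gamma/\langle\sigma\rangle$ on it. Composing the $U^0$-step and the $\Gamma$-step produces a positive-dimensional smooth subgroup fixed by all of $U$, which proves (ii).

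For (i), if $\dim V > 0$ the subgroup from (ii) is nontrivial, so only the case of finite $V$ remains; in characteristic $0$ such $V$ is trivial, and in characteristic $p$ it is étale. Then $U^0$ acts trivially, since a connected group has no nonconstant action on an étale scheme, so over $K_s$ the action factors through $\pi_0(U)$; the elementary fact that a finite $p$-group has nontrivial fixed points on a nontrivial finite $p$-group (apply the orbit-counting argument to $\Omega_1$ of the center) then gives $V^U \neq 1$, and being a subgroup of an étale group it is automatically smooth and defined over $K$. The step I expect to be most delicate is precisely the one forced by imperfectness — ensuring that the subgroups produced are genuinely smooth and positive-dimensional rather than merely positive-dimensional as schemes — which is why I work throughout with images ($\operatorname{im}(N^k)$, $p^iV$) and lower-central-series terms, smooth by construction, and restrict scalar extensions to the separable closure, where the maximal-smooth-subgroup formation is well behaved.
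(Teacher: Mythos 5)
Your argument is correct, but it takes a genuinely different route from the paper's. The paper works uniformly with iterated commutator subgroups inside the semidirect product $W := U \ltimes V$: for (i) it sets $V_0 := V$, $V_{i+1} := [U, V_i]$, checks these are smooth $U$-stable subgroups contained in the descending central series of the nilpotent group $W$, and takes the last nonzero term; for (ii) it first runs the same device with $[U^0,-]$ to kill the $U^0$-action while staying smooth, connected and positive-dimensional, then reduces to $K = K_s$ and $U$ finite \'etale and inducts on the order using a central element $e$ and the series $[e, V''_i]$. Your proof shares the two essential ideas (nilpotence of $U \ltimes V$, and induction on the component group via a central element), but packages them differently: you normalize $V$ to be connected, commutative and $p$-torsion via characteristic subgroups, reduce $U^0$ to the commutative case by the derived series, and replace the commutator series $[e, V''_i]$ by the (in that normalized setting literally equivalent) linear-algebraic series $\im\bigl((\sigma-1)^k\bigr)$, using that $\sigma - 1$ is nilpotent on a commutative $p$-torsion group. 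You also deduce (i) from (ii) together with the class-equation argument for $p$-groups acting on finite $p$-groups, whereas the paper proves (i) directly. What your route buys is that smoothness and connectedness of every subgroup produced is immediate (images of homomorphisms, characteristic series); what it costs is the extra layer of reductions and two points worth making explicit: the chain $V \supseteq pV \supseteq p^2V \supseteq \cdots$ only terminates in characteristic $p$ (in characteristic $0$ the component group of $U$ is trivial, so that reduction is simply unneeded), and at each inductive hand-off the fixed subgroup must be taken canonically -- e.g.\ as $((V^{\mathscr{D}U^0})^{\mathrm{sm}})^0$ -- so that the remaining group still acts on it, as you do explicitly in the $U^0$-to-$\pi_0(U)$ step.
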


\begin{remark}
For our purposes, we could actually make do with (ii) in the case in which $U$ and $V$ are connected, which would simplify the proof below.
\end{remark}

\begin{proof}
Consider the smooth unipotent $K$-group $W := U \ltimes V$. Define a chain of subgroups $V_i$ of $V$ by the formulas
\begin{align*}
& V_0 := V \\
& V_{i+1} := [U, V_i], \hspace{.1 in} i \geq 0.
\end{align*}
One readily verifies inductively that the $V_i$ are normalized by $U$, and that $V_{i+1} \subset V_i$, so in particular, $V_i \subset V_0 = V$. Further, we claim that the $V_i$ are smooth. We proceed by induction, the case $i = 0$ being immediate. The formation of $V_i$ commutes with arbitrary base change, so we may assume that $K = \overline{K}$ to prove smoothness. But $V_{i+1}$ is the image of a map $(U \times V_i)^n \rightarrow W$, hence reduced. Since $K= \overline{K}$, it follows that $V_{i+1}$ is geometrically reduced, hence smooth, as claimed. Now, by definition, $V_i \subset \mathscr{D}_i(W)$, the $i$th step in the descending central series of $W$. Because $W$ is unipotent, it is nilpotent, hence $\mathscr{D}_i(W) = 1$ for large $i$. If we then let $i_0 \geq 0$ be the largest integer such that $V_{i_0} \neq 1$, then $V_{i_0}$ is centralized by $U$, which proves (i).

For (ii), we first note that $V^0$ is normalized by $U$, hence we may assume that $V$ is connected. Now consider the $K$-subgroups $V'_i$ of $V$ defined as follows:
\begin{align*}
& V'_0 := V \\
& V'_{i+1} := [U^0, V'_i], \hspace{.1 in} i \geq 0.
\end{align*}
Then one readily verifies inductively that $V'_i$ is normalized by $U$ and contained in $V'_{i+1}$. It is connected by a similar argument to that given for smoothness of the $V_i$ above. Just as with the $V_i$ we conclude that $V'_i = 1$ for all large enough $i$. If we let $i_0 \geq 0$ be the maximal integer such that $V'_{i_0} \neq 1$, then $V'_{i_0} \subset V$ is smooth, connected, positive-dimensional, normalized by $U$, and acted on trivially by $U^0$. Replacing $V$ by $V'_{i_0}$, therefore, and $U$ by $U/U^0$, we may assume that $U = E$ is finite \'etale.

The $E$-invariants $V^E$ of $V$ form a closed $K$-subgroup scheme of $V$. The assertion we wish to prove is that the maximal smooth $K$-subgroup scheme of $V^E$ is positive-dimensional. Since the formation of both $V^E$ and the maximal smooth subgroup scheme commute with separable base change \cite[Lem.\,C.4.1]{cgp}, we are free to assume that $K$ is separably closed. We then prove the assertion by induction on $\#E$, the case $\#E = 1$ being trivial. So suppose that $E \neq 1$ is an \'etale unipotent $K$-group. Because $E$ is unipotent, it has nontrivial center, so choose $1 \neq e \in E(K)$ lying in the center of $E$.

Consider as before the $K$-group $W := E \ltimes V$, and define the $K$-subgroups $V''_i \subset V$ by the following formula:
\begin{align*}
& V''_0 := V \\
& V''_{i+1} := [e, V''_i], \hspace{.1 in} i \geq 0,
\end{align*}
where $[e, V''_i]$ means the group generated by the map $V''_i \rightarrow W, v \mapsto [e,v]$. As before, one readily verifies that $V''_i$ is a descending series of smooth connected subgroups of $V$, all of which are normalized by $E$ (this uses the fact that $e$ is central in $E$), and again, as before, unipotence of $W$ implies that there is some $i_0$ such that $V''_{i_0}$ is positive-dimensional and $e$ acts trivially on $V''_{i_0}$. Replacing $V$ by $V''_{i_0}$, then, we may suppose that $e$ acts trivially on $V$. Then the action descends to an action of $E/\langle e\rangle$ on $V$, and we are therefore done by induction.
\end{proof}

Recall that, for a smooth connected $K$-group scheme $G$, $$\Ext^1(G, \Gm) := \{\mathscr{L} \in \Pic(G) \mid m^*\mathscr{L} \simeq \pi_1^*\mathscr{L} \otimes \pi_2^*\mathscr{L}\} \subset \Pic(G).$$ That is, $\Ext^1(G, \Gm)$ consists of the universally translation-invariant line bundles on $G$. For an explanation of the notation, see the introduction to \cite{rostrans}.

\begin{proposition}
\label{Picfiniteiffextfinite}
Let $U$ be a smooth connected unipotent $K$-group.
\begin{itemize}
\item[(i)] $\Pic(U_{K_s})$ is trivial if and only if $\Ext^1_{K_s}(U, \Gm)$ is.
\item[(ii)] $\Pic(U_{K_s})$ is finite if and only if $\Ext^1_{K_s}(U, \Gm)$ is.
\end{itemize}
\end{proposition}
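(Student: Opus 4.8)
The plan is to reduce both statements to Lemma \ref{uniponunip}, applied to the action of $U$ by translation on the identity component of its own restricted Picard scheme. Since $\Pic(U_{K_s})$ and $\Ext^1_{K_s}(U,\Gm)$ depend only on $U_{K_s}$, I first replace $K$ by $K_s$ and assume $K=K_s$ is separably closed, writing $\Pic(U)$ and $\Ext^1(U,\Gm)$. A smooth connected unipotent group is a form of affine space and has a rational point, so Theorem \ref{formsofaffrepbody} applies: $P:=\Pic^+_{U/K}$ is a smooth commutative unipotent $K$-group scheme whose component group is finite (finitely generated, by Theorem \ref{represtpicbody}, and $p$-power torsion), and for every smooth $S$ one has $\Pic^+_{U/K}(S)=\Pic(U\times S)/\Pic(S)$. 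Taking $S=\mathrm{Spec}(K)$ gives $\Pic(U)=P(K)$, and taking $S=U$ gives $\Pic^+_{U/K}(U)=\Pic(U\times U)/\pi_2^*\Pic(U)$. Because $\Ext^1(U,\Gm)\subseteq\Pic(U)=P(K)$ by definition, the ``only if'' direction of each part is immediate: triviality (resp.\ finiteness) of $\Pic(U)$ passes to its subgroup $\Ext^1(U,\Gm)$.

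For the converses I introduce the translation action. Left translation of $U$ on itself induces, by functoriality of $\Pic^+$ and representability of $P$, a morphism $U\times P\to P$, $(u,p)\mapsto t_u^*p$, acting by group-scheme automorphisms. Since $U$ is connected and the component group of $P$ is étale, the induced action on the component group is trivial; hence $t_u^*p-p\in P^0$ for all $u$, and the action restricts to an action of $U$ on the smooth connected unipotent group $V:=P^0=(\Pic^+_{U/K})^0$. The key reformulation is that every class fixed by this action is primitive, i.e.\ $(P^0)^U(K)\subseteq\Ext^1(U,\Gm)$. Indeed, if $p=[\mathscr{L}]\in(P^0)^U(K)$, then the universal point $\mathrm{id}_U\in U(U)$ fixes $p$, which in $\Pic^+_{U/K}(U)=\Pic(U\times U)/\pi_2^*\Pic(U)$ says that the class of $m^*\mathscr{L}$ equals that of $\pi_1^*\mathscr{L}$, so $m^*\mathscr{L}\otimes\pi_1^*\mathscr{L}^{-1}=\pi_2^*\mathscr{M}$ for some $\mathscr{M}\in\Pic(U)$. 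Restricting along the second axis $i_2\colon u\mapsto(e,u)$, for which $m\circ i_2=\mathrm{id}$, $\pi_1\circ i_2=e$ and $\pi_2\circ i_2=\mathrm{id}$, identifies $\mathscr{M}=\mathscr{L}$, whence $m^*\mathscr{L}=\pi_1^*\mathscr{L}\otimes\pi_2^*\mathscr{L}$, which is exactly primitivity.

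Granting this, both converses follow from Lemma \ref{uniponunip} applied to $U$ acting on $V=P^0$. For (i), suppose $\Pic(U)=P(K)\neq 0$, i.e.\ $P\neq 0$. If $V\neq 1$, Lemma \ref{uniponunip}(i) gives a nontrivial smooth $K$-subgroup $V'\subseteq V$ on which $U$ acts trivially, so $0\neq V'(K)\subseteq(P^0)^U(K)\subseteq\Ext^1(U,\Gm)$. If instead $V=P^0=0$ but $P\neq 0$, then $P$ is finite nonzero, the morphism $u\mapsto t_u^*p-p$ lands in $P^0=0$ for every $p$, so every class is fixed hence primitive, giving $\Ext^1(U,\Gm)=P(K)\neq0$. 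Either way $\Ext^1(U,\Gm)\neq 0$, which is (i) by contraposition. For (ii), if $\Pic(U)=P(K)$ is infinite then, the component group being finite, $\dim V=\dim P^0>0$, so Lemma \ref{uniponunip}(ii) yields a positive-dimensional smooth $V'\subseteq V$ fixed by $U$; over the separably closed $K$ the group $V'(K)$ is infinite and contained in $\Ext^1(U,\Gm)$, so $\Ext^1(U,\Gm)$ is infinite. Conversely $\dim P=0$ forces $P(K)=\Pic(U)$, hence $\Ext^1(U,\Gm)$, finite.

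The hard part will be setting up this dictionary rather than the final counting: I must verify that the translation action $U\times P\to P$ is genuinely algebraic, so that the fixed-point subgroup scheme $(P^0)^U$ exists and $V=P^0$ is a legitimate target for Lemma \ref{uniponunip}, and I must carry out the identification of primitivity with a fixed-point condition through the reduction $\Pic^+_{U/K}(U)=\Pic(U\times U)/\pi_2^*\Pic(U)$ and the axis-restriction computation. Once this reformulation is in place, Lemma \ref{uniponunip} supplies exactly the invariant---hence primitive---line bundles needed, and nothing further is required.
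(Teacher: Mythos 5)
Your proposal is correct and follows essentially the same route as the paper: reduce to $K = K_s$, invoke Theorem \ref{formsofaffrepbody} to represent $\Pic^+_{U/K}$ and identify its $K$-points with $\Pic(U)$, characterize $\Ext^1(U,\Gm)$ as the translation-fixed classes, and conclude via Lemma \ref{uniponunip}. The only difference is bookkeeping — you apply the lemma to $P^0$ and treat the (automatically fixed) component group separately, and you spell out the fixed-implies-primitive computation that the paper leaves implicit.
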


\begin{proof}
We may assume that $K$ is separably closed. The only if directions are immediate, so we only need to prove the if directions. By Theorem \ref{formsofaffrepbody}, $\Pic^+_{U/K}$ is represented by a (smooth) unipotent $K$-group scheme. Furthermore, $\Pic^+_{U/K}(K) = \Pic(U)$. Now the action of $U$ on itself by left-translation induces an action on $\Pic^+_{U/K}$, and $x \in \Pic^+_{U/K}(K)$ is fixed by $U$ if and only if the corresponding line bundle lies in $\Ext^1(U, \Gm)$. Thanks to Lemma \ref{uniponunip}, we have that $((\Pic^+_{U/K})^U)^{\rm{sm}}$ (with ${\rm{sm}}$ denoting the maximal smooth $K$-subgroup scheme) is trivial, respectively \'etale, if and only if the same holds for $\Pic^+_{U/K}$, which proves the proposition.
\end{proof}

\noindent \address
\vspace{.3 in}

\noindent \email

\end{document}